\numberwithin{equation}{section}
\newtheorem{theorem}{Theorem}[section]
\newtheorem{lemma}[theorem]{Lemma}
\newtheorem{proposition}[theorem]{Proposition}
\newtheorem{corollary}[theorem]{Corollary}
\theoremstyle{definition}
\newtheorem{definition}[theorem]{Definition}
\newtheorem{remark}[theorem]{Remark}
\newtheorem{remarks}[theorem]{Remarks}
\newtheorem{example}[theorem]{Example}
\newcommand{\Ls}{\mathrm{L}_s}
\newcommand{\Gs}{\mathrm{G}_s}
\newcommand{\GsV}{\mathrm{G}_{s,V}}
\newcommand{\dist}{{\rm dist}\,}
\DeclareMathOperator{\Dom}{Dom}
\DeclareMathOperator{\sign}{sign}
\DeclareMathOperator{\defeq}{\dot{=}}
\newcommand{\ee}{\varepsilon}
\newcommand{\Xs}{X^s_\Omega}
\begin{document}

\title{The fractional Schr\"odinger equation with general nonnegative potentials. The weighted space approach}

\author{J.I. D\'{\i}az\thanks{Instituto de Matem\'atica Interdisciplinar,
Universidad Complutense de Madrid,
Plaza de Ciencias, 3, 28040 Madrid (Spain). E-mail address:  ji${}_{-}$diaz@mat.ucm.es}
\and D. G\'omez-Castro\thanks{Instituto de Matem\'atica Interdisciplinar,
Universidad Complutense de Madrid,
Plaza de Ciencias, 3, 28040 Madrid (Spain). E-mail address: dgcastro@ucm.es}
\and J.L. V\'azquez\thanks{Dpto. Matem\'aticas, Univ. Aut\'onoma de Madrid, 28049  Madrid (Spain). E-mail address:  juanluis.vazquez@uam.es}
}

\date{}
\maketitle
\vspace{-.5cm}
\begin{center}
	\emph{Dedicated to Professor Carlo Sbordone on the occasion of his 70th birthday}
\end{center}
\

\begin{abstract} We study the Dirichlet problem for the stationary Schr\"odinger fractional Laplacian equation \ $(-\Delta)^s u + V u = f$ posed in bounded domain $  \Omega  \subset \mathbb R^n$ with zero outside conditions. We consider general nonnegative potentials $V\in L^1_{loc}(\Omega)$ and prove well-posedness of very weak solutions when the data are chosen in an optimal class of weighted integrable functions $f$. Important properties of the solutions, such as its boundary behaviour, are derived.  The case of super singular potentials that blow up near the boundary is given special consideration since it leads to so-called flat solutions. We comment on related literature.
\end{abstract}

\tableofcontents

\

\newpage

\section{Introduction}

Over the last decades there has been a strong research effort devoted to extend the theory of elliptic and parabolic equations to models in which the Laplacian operator or its elliptic equivalents are replaced by different types of nonlocal integro-differential operators, most notably those called fractional Laplacian operators, given by the formula
\begin{equation}\label{frac.lap}
	(-\Delta)^s u (x) = c_{n,s} P.V. \int_{\mathbb R^n} \frac{u(x) - u(y)}{|x-y|^{n + 2s}}dy\,,
\end{equation}
with parameter $s\in (0,1)$ and a precise constant $ c_{n,s}>0 $ that we do not need to make explicit. In this formula the domain of definition is assumed to be $\mathbb R^n$.   The operator can also be defined via the Fourier transform on $\mathbb R^n$, see the classical references \cite{landkof1972foundations,stein2016singular}. With an appropriate value of the constant $c_{n,s}$, the limit $s \to 1$ produces the classical Laplace operator $-\Delta$, while the limit $s \to 0$ is the identity operator. An equivalent definition of this fractional Laplacian uses the so-called extension method, that was well-known for $s=\frac 1 2$ and has been extended to all $s\in (0,1)$ by Caffarelli and Silvestre \cite{caffarelli+silvestre2007extension}. In view of its interest in different applications, many authors have taken part in such an effort from different points of view:  probability, potential theory, and PDEs. We are interested in the PDE point of view and its connection with questions of Functional Analysis.

In this paper we study the fractional elliptic equation of Schr\"odinger type
\begin{equation} \label{eq:FDE} \tag{P}
	\begin{dcases}
		(-\Delta)^s u + V u = f & \Omega,  \\
		u = 0 & \mathbb R^n \setminus \Omega.
	\end{dcases}
\end{equation}
Here, $\Omega$ is a bounded subdomain of the space $\mathbb R^n$, $n\ge 2$, with $C^2$ boundary, and the fractional Laplacian operator $(-\Delta)^s$ is the so-called restricted or natural version given by the formula \eqref{frac.lap}, where now $x\in \Omega$ while $y$ extends to the whole space. The potential $V \ge 0$ is a measurable function satisfying mild integrability assumptions. The aim is to treat general classes of data $f$ and potentials $V$, in particular very singular potentials that blow up near the boundary and appear in important applications.

We will start from the Dirichlet problem for the fractional Laplacian equation
\begin{equation} \label{eq:FDE Laplace} \tag{P$^{0}$}
\begin{dcases}
(-\Delta)^s u = f & \Omega,  \\
u = 0 & \mathbb R^n \setminus \Omega\,,
\end{dcases}
\end{equation}
i.e., the case of zero potential. This problem and variants thereof have been well studied  by many authors, we refer to the excellent survey \cite{Ros-Oton2016}, which contains many basic references, see also \cite{BFV2018,Caffarelli2017, chen+veron2014, Ros-Oton2014}. Here, it will serve to introduce concepts and results and pose the theory for optimal classes of data $f$. We abandon the usual weak solutions of the energy theory and consider locally integrable functions $f$. This leads to the theory of {\sl very weak} and {\sl dual solutions}. The optimal class of data turns out to be the class of weighted integrable functions
\begin{equation}
L^1(\Omega; \delta^s )=\{f \, \mbox{measurable in } \ \Omega: \ f \delta^s \in L^1( \Omega)\},
\end{equation}
where $\delta (x)=\dist(x,\Omega^c)$. The problem of well-posedness with weighted integrable data has been considered in the case $s=1$, which we will call classical case hereafter, by Brezis in \cite{Brezis1971,brezis+cazenave+martel1996blow+up+revisited} in the framework of very weak solutions in weighted spaces, and has been treated recently by several authors,
\cite{Diaz+Rakotoson:2009, Diaz+Rakotoson:2010, Orsina2017, Rakotoson2012}. Extending that work to the fractional case is an important issue that we address. We recall the existence and uniqueness of very weak solutions, and establish the main properties in the case where $f\in L^1(\Omega; \delta^s )$, like  comparison, accretivity, boundary behavior, Hopf principle, and optimality of data. We devote special attention to the question of clarifying the existence of traces of the very weak solutions: we find the condition on $f$ for the trace to exist in an integral sense.

We then address a key issue of this paper, i.\,e., the study of the stationary Schr\"odinger equation. We want to  solve Problem \eqref{eq:FDE} with general $V\ge 0$ and $f$. Here the concept of very weak solution plays an important role. The theory is simple in the class of bounded or integrable potentials. Besides, if the potential is moderately singular, in a sense to be specified later, Hardy's inequality and Lax-Milgram implies uniqueness of weak solutions. However, we are interested precisely in a type of potentials that diverges at the boundary, and this leads to a delicate analysis.  Theorems \ref{thm:uniqueness} and \ref{thm:existence} settle the well-posedness of the Dirichlet problem in the class of very weak solutions for general data and count among the main results of this paper. See the whole Section 4 for further results. By the way, our results also improve what was known for $s=1$.

Here is a main motivation for the interest in general potentials.
For the classical Laplacian ($s=1$),  it was first shown by Sir Nevill Francis Mott in his 1930 book  \cite{mott1930outline}, inspired in the pioneering paper by Gamow \cite{Gamow1929} on the tunneling effect, \normalcolor that for certain families of potentials the Schr\"odinger equation, which is naturally posed in $\mathbb R^n$, can be localized to a bounded domain of $\mathbb R^n$, which is given by the nature of $V$.
On the other hand, the quasi-relativistic approach to bounded states of the Schr\"{o}dinger equation, leads to the fractional
operator corresponding to $s=1/2,$ $\sqrt{(-\Delta ) + m^2}u$ (which is also known as \emph{Klein-Gordon square root operator}, see,e.g. \cite{frank+lieb2007,herbst1977spectral}, see also \cite{Fall2015,KALETA2013} and  references). In the case of massless particles we obtain $(-\Delta)^{\frac 1 2} u$, also called in this context \emph{ultra-relativistic operator}. \normalcolor Some illustrative examples of the
class of singular potentials to which we want to apply our results are the ones given, for instance, by the attractive Coulomb case for a charge distributed over $\partial \Omega$: $V(x)=C/\delta (x)$ with $C>0$, or even by more singular functions
as it is the case of the P\"{o}sch-Teller potential (see \cite{Pochl1933})
\begin{equation}
V(x)=V(\left\vert x\right\vert )=\frac{1}{2}V_{0}\left ( \frac{k(k-1)}{\sin
	^{2}\alpha \left\vert x\right\vert }+\frac{\mu (\mu -1)}{\cos ^{2}\alpha
	\left\vert x\right\vert } \right) \label{Posch-Teller}
\end{equation}%
for some $V_{0},\alpha >0,$ $k,\mu \geq 0$ , intensively studied since 1933. Notice that this potential  blows up in a sequence of spheres.
Some other singular potentials, in the class of the so called \textit{%
super-symmetric potentials} can be found, e.g. in \cite{Cooper1995}. We refer to \cite{AizenMR644024}  as a classical paper on the mathematical study of the time independent Schr\"odinger equation for the standard Laplacian. See also \cite{Ponce2016a} for a recent reference.

Actually, another key point of this paper is studying  the sense in which the solutions of \eqref{eq:FDE} satisfy the boundary condition $u=0$ on $\partial \Omega,$ see Section \ref{sec.sspot} where so-called \emph{ flat solutions} are discussed, and also its interplay with the way in which the extended function (defined in the whole space $\mathbb{R}^{n}$) satisfies (or not) the same partial differential equation. This plays an important role in many applications as for instance Quantum Mechanics, as already mentioned.
Furthermore, since in bounded domains there are several different choices of $(-\Delta)^s$ present in the literature (see, e.g., \cite{BFV2018, MR3588125}), it is relevant to study which choice represents the correct localization of a global problem (see \Cref{sec:natural limit}).

Our data $f$ belong to an optimal class of locally integrable functions. There is a simple extension of the theory to cover the case where integrable functions $f$ are replaced by  measures $\mu$. The precise space is ${\mathcal M}(\Omega, \delta^s)$ consisting of locally bounded signed Radon measures $\mu$ such that \
$
\int_\Omega \delta^s(x)d|\mu|(x)<+\infty.
$
Actually, the results of \cite{chen+veron2014} that cover the zero-potential case are written in that generality. Our existence and uniqueness theory, contained in  Theorems \ref{thm:uniqueness} and \Cref{thm:existence}, is valid in that context. We have refrained  from that generality in our presentation because using functions makes most of our calculations and consequences easier to formulate.

Regarding potentials, we have considered general nonnegative potentials $V\in L^1_{loc}(\Omega)$. This class allows for extensions in two directions: considering signed potentials, and considering locally bounded measures as potentials. Both are present in the literature, but both lead to problems that we did not want to consider here.

\medskip

{\sc Comment.} The paper surveys topics that are treated, at least in part, in the recent literature, but it is also a research paper and many results are new, specially in Sections 3, 4, 5 and 6. We have tried to mention suitable references to relevant and related known results. Since the literature on elliptic problems with fractional Laplacians is so numerous, we refer to specialized monographs for
 more complete bibliographical information and beg excuse for possible undue omissions.


\section{Preliminaries}\label{sec.prelim}

 We introduce the fractional seminorm
\begin{equation}
[v]_{H^s (\mathbb R^n) }^2 =  \int_ {\mathbb R^n } \int_ {\mathbb R^n } \frac{|f(x)-f(y)|^2}{|x-y|^{n+2s}} dy dx ,
\end{equation}
and then the fractional Hilbert spaces $H^s$ defined by
\begin{equation}
H^s ({\mathbb R^n }) = \{  v \in L^2 ({\mathbb R^n }) :  [v]_{H^s ({\mathbb R^n })} <+ \infty  \},
\end{equation}
with the norm
\begin{align}
\|v\|_{H^s ({\mathbb R^n })}^2 &= \|v \|_{L^2({\mathbb R^n })}^2 + [v]_{H^s ({\mathbb R^n })}^2.
\end{align}
We point out that
\begin{equation}
\|v\|_{H^s({\mathbb R^n })} \asymp \|v\|_{L^2({\mathbb R^n })} + \|(-\Delta)^{\frac s 2}v\|_{L^2({\mathbb R^n })}  .
\end{equation}
where the symbol $a \asymp b$ means that there are constants $c_1, c_2 > 0$ such that $c_1 a \le b \le c_2 a$.

When working in a bounded domain $\Omega$, and in order to take into account the boundary and exterior conditions, we  define the Hilbert spaces
\begin{align}
H_0^s (\Omega) &=  \overline{\mathcal C_c^\infty (\Omega)}^{\| \cdot \|_{H^s (\mathbb R^n)}}.
\end{align}
  Classical texts on Sobolev spaces to be consulted are \cite{Adams+Fournier:2003sobolev+spaces,brezis2010functional,Leoni1967Sobolev,lions+magenes1972,Tartar2007}.  For a concise introduction to $H^s (\mathbb R^n)$ we refer the reader to \cite{Bonforte2015, DiNezza2012}.

By analogy to the classical case $s =1$, a ``formula of integration by parts'' (or ``Green's formula'') holds
\begin{proposition} \label{prop:fractional integration by parts}
	Let $u, v \in C_c^\infty (\mathbb R^n)$ and $0 < s \le 1$. Then
	\begin{equation} \label{eq:integration by parts}
	\int_ {\mathbb R^n} v ( - \Delta)^s u = \int_ {\mathbb R^n} (-\Delta)^{\frac s 2} u ( -\Delta)^{\frac s 2 } v.
	\end{equation}
\end{proposition}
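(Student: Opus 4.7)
The plan is to work on the Fourier side, where $(-\Delta)^\sigma$ is simply the multiplier $|\xi|^{2\sigma}$. First I would verify well-posedness of both sides: since $u, v \in C_c^\infty(\mathbb R^n) \subset \mathcal S(\mathbb R^n)$, the Fourier transforms $\hat u, \hat v$ lie in $\mathcal S(\mathbb R^n)$, and hence the functions $|\xi|^{s}\hat u$, $|\xi|^s \hat v$ and $|\xi|^{2s}\hat u$ all belong to $L^2(\mathbb R^n)$ (rapid decay at infinity, only mild power behavior near the origin). Consequently $(-\Delta)^{s/2}u$, $(-\Delta)^{s/2}v$ and $(-\Delta)^s u$ are well-defined $L^2$ functions, so both sides of \eqref{eq:integration by parts} are finite and the manipulations below are legitimate.

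The heart of the proof is then a short Parseval computation. Using the Fourier characterization $\widehat{(-\Delta)^\sigma w}(\xi) = |\xi|^{2\sigma}\hat w(\xi)$ for $w \in \mathcal S$ together with Plancherel's theorem, one obtains
\[
\int_{\mathbb R^n} v\,(-\Delta)^s u\, dx \;=\; \int_{\mathbb R^n} \hat v(\xi)\, \overline{|\xi|^{2s}\hat u(\xi)}\, d\xi \;=\; \int_{\mathbb R^n} \bigl(|\xi|^s \hat v(\xi)\bigr)\, \overline{\bigl(|\xi|^s \hat u(\xi)\bigr)}\, d\xi,
\]
and a second application of Parseval identifies the last expression with $\int_{\mathbb R^n}(-\Delta)^{s/2}v\cdot(-\Delta)^{s/2}u\, dx$. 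Since $u$ and $v$ are real-valued, the complex conjugates can be dropped to match the statement. The limiting case $s=1$ recovers the classical Green identity.

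An alternative, Fourier-free route would be to substitute the singular integral representation \eqref{frac.lap} into the left-hand side, apply Fubini, and symmetrize in $x \leftrightarrow y$, thereby recasting $\int v(-\Delta)^s u$ as the symmetric Gagliardo-type bilinear form $\tfrac{c_{n,s}}{2}\iint \frac{(u(x)-u(y))(v(x)-v(y))}{|x-y|^{n+2s}}\, dx\, dy$. Matching this with $\int(-\Delta)^{s/2}u\,(-\Delta)^{s/2}v$ then reduces to the polarization of the identity $\|(-\Delta)^{s/2}w\|_{L^2}^2 = \tfrac{c_{n,s}}{2}[w]^2_{H^s(\mathbb R^n)}$, which is itself a Fourier computation.

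\textbf{Main obstacle.} The only subtlety in either approach is bookkeeping: justifying absolute integrability so that Plancherel (or Fubini in the alternative route) may be applied without qualification. Because $u$ and $v$ are Schwartz this verification is routine, so once the Fourier characterization of $(-\Delta)^s$ is admitted the proof is essentially mechanical.
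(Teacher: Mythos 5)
Your proof is correct and rests on the same fundamental idea as the paper's: the paper factorizes $(-\Delta)^s = (-\Delta)^{s/2}(-\Delta)^{s/2}$ and moves one factor across the pairing using self-adjointness, while you unpack exactly those two facts into the explicit Fourier/Plancherel computation that underlies them. The approaches are equivalent; yours is merely the more self-contained version of the paper's one-line operator-algebra argument.
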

\begin{proof}
	Since the operator $(-\Delta)^{\frac s 2}$ is self-adjoint (see, e.g., \cite{chen+veron2014}) and $(-\Delta)^{s+t} = (-\Delta)^s (-\Delta)^t$ for $0 < s,t,s+t \le 1$ we have
	\begin{equation}
		\int_ {\mathbb R^n} v ( - \Delta)^s u = \langle v, (-\Delta)^{\frac s 2} ( - \Delta)^{\frac s 2} u \rangle = \langle (-\Delta)^{\frac s 2} v, (- \Delta)^{\frac s 2} u \rangle =  \int_ {\mathbb R^n} (-\Delta)^{\frac s 2} u ( -\Delta)^{\frac s 2 } v.
	\end{equation}
	This proves the result.
\end{proof}

More general integration by parts results can be found in \cite{abatangelo2013large}, that treats a general integration by parts formula that includes terms accounting for a non-zero value of $u$ in $\Omega^c$ and a precise limit on the boundary.

\begin{remark}
	By density, the formula is true for any $u \in H^{2s} (\Omega) \cap H_0^s(\Omega)$ and $v \in H^s_0 (\Omega)$. Particularising for $s=1$ and making $u = v$ we deduce the classic formula
	\begin{equation}
	\| \nabla v \|_{L^2(\Omega)^n}= \| (-\Delta)^{\frac 1 2} v \|_{L^2 ({\mathbb R^n })}.
	\end{equation}
\end{remark}
\begin{remark}
	Some authors prefer the following presentation:
	\begin{equation}\label{double}
			\int_ {\mathbb R^n} v ( - \Delta)^s u  = c_{n,s} P.V.  \int_ {\mathbb R^n} \int_ {\mathbb R^n} \frac{ ( v(x) - v(y) ) (u(x) - u(y) ) }{|x-y|^{n+2s} } dy dx.
	\end{equation}
\end{remark}

 This operator also has a Kato inequality. In \cite{Caffarelli2017}, is presented simply as: $	(-\Delta)^s |u| \le \sign u \, (-\Delta)^s u$ holds in the distributional sense. A precise expression can be found in \Cref{lem:Kato}.
In  \Cref{section:Kato proof} we provide for the reader's benefit a simple proof which is useful for our  presentation.

\section{Dirichlet problem  without potentials and general data}\label{sec.nopot}

\subsection{Weak solutions. A survey on existence, uniqueness and properties}

 A weak solution of the Dirichlet  problem \eqref{eq:FDE Laplace} (with zero potential $V=0$) can be obtained by an energy minimization method using the appropriate fractional Sobolev spaces, as introduced in the previous section.
 Applying \eqref{eq:integration by parts} we introduce the concept of weak solution as:

\begin{definition}  \label{defn:ws no potential}
	$f \in L^2 (\Omega)$. A weak solution of \eqref{eq:FDE Laplace} is a function $u \in H^s_0 $ such that
	\begin{equation} \tag{P$^{0}_{\textrm{w}}$}	\int_ {\mathbb R^n } (-\Delta)^{\frac s 2} u ( -\Delta)^{\frac s 2 } \varphi \,dx= \int_{ \Omega } f \varphi\,dx, \qquad \varphi \in H_0^s (\Omega).
	\end{equation}
\end{definition}

Existence and uniqueness of weak solutions is easy for $f \in L^2 (\Omega)$, by the Lax-Milgram theorem. This is a basic result on which the extended theory is based. Actually, $f$ can be taken in the dual space $(H_0^s(\Omega))'$.

Another option is pursued in  \cite{Caffarelli+Silvestre2009,Caffarelli+Silvestre2011Archive, Caffarelli+Silvestre2011Annals} where the authors proved existence and regularity of viscosity solutions. Both classes of solutions coincide in the common class of data. We will not deal with viscosity solutions in this paper. See also \cite{Cabre2014}.

In \cite{Chen1998,Kulczycki1997}, the authors prove that the solution operator is given by an integral representation in terms of a Green kernel
\begin{equation} \label{eq:Green operator}
[(-\Delta)^{s}]^{-1} f = \int_{\Omega} \mathbb G_s (x,y) f(y) dy
\end{equation}
where
\begin{equation} \label{eq:Green function estimates}
\mathbb G_s(x,y) \asymp \frac{1}{|x-y|^{n-2s}} \left( \frac{\delta(x)}{|x-y|} \wedge 1 \right)^s \left( \frac{\delta(y)}{|x-y|} \wedge 1 \right)^s.
\end{equation}
Using these bounds, many estimates of integrability and regularity can be given by suitably applying H\"older's inequality.

The following regularity results are proved by Ros-Oton and Serra in \cite{Ros-Oton2014} and will be essential in what follows.

\begin{proposition} \label{prop:Ros-Oton} Let $\Omega$ be a bounded $C^{1,1}$, $f\in L^\infty(\Omega),$ and let $u$ be a weak solution of \eqref{eq:FDE Laplace}. Then, the following holds:
\begin{enumerate}
\item We have  $u\in C^s(\mathbb R^n)$ and
\begin{equation}
\|u\|_{C^s (\mathbb R^n)}\le C \|f\|_{L^\infty (\Omega)}\,,
\end{equation}
where $C$ is a constant depending only on $\Omega$ and $s$.

\item Moreover, if $\delta (x)=\dist(x,\Omega^c)$, then for all $x\in \Omega$
\begin{equation}
  |u(x)| \le  C_1\|f\|_{L^\infty (\Omega)}\delta(x)^s\,,
\end{equation}
where $C_1$ is a constant depending only on $\Omega$ and $s$. Besides.
 $\left. u/\delta^s\right|_\Omega$ can be continuously extended to $\overline\Omega$, we have $u/\delta^s\in C^\alpha(\Omega)$ and
\begin{equation}
\left \| \frac u {\delta^s} \right \|_{C^\alpha (\Omega)}\le  C_2\|f\|_{L^\infty (\Omega)}
\end{equation}
for some $\alpha>0$ satisfying $\alpha<\min\{s,1-s\}.$ The constants $\alpha$ and $C_1, C_2$ depend only on $\Omega$ and $s$.
\end{enumerate}
\end{proposition}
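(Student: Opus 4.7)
My plan is to combine the Green-function representation \eqref{eq:Green operator}--\eqref{eq:Green function estimates} with a barrier argument and interior regularity for $(-\Delta)^s$, and then to run a blow-up/compactness iteration near $\partial\Omega$ for the finer statement about $u/\delta^s$.

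For the pointwise bound $|u(x)|\le C_1\|f\|_{L^\infty}\delta(x)^s$ in Part (2), I would construct an explicit supersolution. Let $\eta$ solve $(-\Delta)^s\eta=1$ in $\Omega$ with zero exterior data; in a $C^{1,1}$ domain one has $\eta\asymp\delta^s$, which follows either from integrating \eqref{eq:Green function estimates} or from an explicit half-space computation combined with a boundary-flattening argument. Setting $w=\|f\|_{L^\infty}\eta$ and invoking the weak comparison principle for $H_0^s(\Omega)$ solutions yields $|u|\le w$, and hence the stated bound. For the global $C^s$ estimate in Part (1), I would combine this boundary decay with interior $L^\infty\to C^{2s-\varepsilon}$ estimates for $(-\Delta)^s u=f$: applied on balls of radius $\delta(x)/2$ and rescaled, these give $[u]_{C^s(B_{\delta(x)/2}(x))}\le C\|f\|_{L^\infty}$ uniformly in $x$, and gluing this interior H\"older modulus to the boundary modulus supplied by the pointwise bound via a dyadic/telescoping argument produces the global estimate.

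The extension of $u/\delta^s$ to $\overline{\Omega}$ in $C^\alpha$ is the main obstacle and requires a Krylov-type iteration near boundary points. Fix $x_0\in\partial\Omega$ and, after flattening a neighbourhood of $x_0$ to a half-space, consider the dilations $u_r(x)=r^{-s}u(x_0+rx)$; these satisfy $(-\Delta)^s u_r=r^s f(x_0+r\,\cdot)$ in the rescaled domain, whose right-hand side tends to $0$ as $r\to 0$. By Part (1) the family $\{u_r\}$ is precompact in $C^s_{\mathrm{loc}}$, and any limit $w$ solves $(-\Delta)^s w=0$ in the half-space with zero exterior data and at most $s$-growth. A Liouville-type classification then forces $w(x)=c\,(x_n)_+^s$, and a standard contradiction/iteration argument upgrades this into quantitative oscillation decay of $u/\delta^s$ on dyadic scales, yielding the $C^\alpha$ modulus with $\alpha<\min\{s,1-s\}$. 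The genuinely delicate point is this Liouville classification together with the compactness step: under dilations the nonlocal tail does not vanish, so one must carefully control the exterior contribution to $(-\Delta)^s u_r$ to pass to the limit while preserving the growth hypothesis.
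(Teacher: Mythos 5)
The paper does not actually prove this proposition: it is quoted verbatim from Ros--Oton and Serra \cite{Ros-Oton2014}, as the sentence preceding it makes explicit. So there is no internal proof to compare your sketch against; the relevant comparison is with the cited work.

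Your outline is a reasonable and essentially correct reconstruction, but it follows a somewhat different route than the original. For Part (1) and the pointwise bound $|u|\le C\|f\|_\infty\delta^s$, the barrier $\eta$ with $(-\Delta)^s\eta=1$, $\eta\asymp\delta^s$, combined with rescaled interior estimates and a dyadic gluing, is precisely the Ros--Oton--Serra strategy; one small caveat is that the interior fractional estimate involves a nonlocal tail, so you need a global $L^\infty$ bound on $u$ (immediate from the barrier) to close the scaling argument. For the $C^\alpha$ estimate of $u/\delta^s$, you propose a boundary-flattening plus blow-up/compactness/Liouville scheme; Ros--Oton and Serra instead run a direct Krylov-type oscillation-decay iteration near the boundary, estimating $\operatorname{osc}_{B_r\cap\Omega}(u/\delta^s)$ on dyadic scales via explicit sub- and supersolutions, without passing to a half-space Liouville theorem. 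Both routes are valid; the blow-up/Liouville method (which the same authors and others used in later, more general settings) buys conceptual clarity and generality (e.g., fully nonlinear or rough kernels) at the cost of needing the half-space classification of $s$-harmonic functions with zero exterior data and $s$-growth, together with careful control of the nonlocal tail in the limit---a delicacy you correctly flag. The original direct argument is more elementary and self-contained. Two minor imprecisions: the uniform $C^s$ bound gives precompactness in $C^0_{\mathrm{loc}}$ (and in $C^{s'}_{\mathrm{loc}}$ for $s'<s$ by interpolation), not $C^s_{\mathrm{loc}}$ as written, though local uniform convergence plus the growth bound suffices to pass to the limit in the equation; and the Liouville step needs a quantitative statement (growth strictly below $2s$) to conclude $w=c\,(x_n)_+^s$, which is fine here since $s<2s$.
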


\begin{remarks}
1) Existence and uniqueness of weak solutions in the classical case $s=1$ is standard. There are also many results about the regularity of the weak solutions with data in Lebesgue spaces. For instance, for $p = 1$ and $n=2$, we have the very sharp results of \cite{fiorenza1998existence}.

2) There are many references to the variational treatment of equations with nonlocal operators, both linear and nonlinear, see \cite{BookMolicaMR3445279}.
\end{remarks}


\subsection{Very weak solutions. A survey on existence, uniqueness and properties}

However, our purpose is  to deal with a larger class of data $f$, which are locally integrable functions, otherwise  as general as possible. A more general definition of solution is necessary. We take an old idea by Brezis (see \cite{Brezis1971}).

\begin{definition} \label{defn:vws Brezis no potential}
	Let $f \in L^1 (\Omega, \delta^s)$. We say that $u$ is a {\sl very weak solutio}n of \eqref{eq:FDE Laplace} if
	\begin{equation} \tag{P$^0_{\textrm{vw}}$} \label{eq:FDE Brezis no potential}
	\begin{dcases}
	u\in L^1 (\Omega),  \\
	u = 0 \textrm{ a.e. } \mathbb R^n \setminus \Omega \textrm{ and } \\
	\int _\Omega u (-\Delta)^s \varphi\,dx  = \int_{\Omega} f \varphi\,dx , \qquad \forall \varphi \in \Xs,
	\end{dcases}
	\end{equation}
	where
	\begin{equation} \label{eq:defn Xs}
	\Xs = \{ \varphi \in \mathcal C^s (\mathbb R^n) : \varphi = 0 \textrm{ in }\mathbb R^n \setminus \Omega \ \textrm{ and } \ (-\Delta)^s \varphi \in L^\infty (\Omega) \} .
	\end{equation}
\end{definition}
This type of solution is also known as very \sl weak solution in the sense of Brezis. \rm

\begin{remarks}  \label{rem:to the definition of weak sol of P0}
	1)  Applying identity \eqref{eq:integration by parts} again we can prove that any weak solution in the sense of Definition     \ref{defn:ws no potential} satisfies our definition of very weak solution.

\medskip

2) This definition allows us to take $f$ to be outside $L^1$, but rather with a weighted integrability condition, $f\in L^1(\Omega; \delta^s) $, which will turn out to be the correct class. About the weight, Ros-Oton and Serra proved in \cite[Lemma 3.9]{Ros-Oton2014} that $\delta^s \in \mathcal C^{\alpha} (\Omega \cap \{ \delta < \rho_0  \})$ for $\alpha = \min \{ s , 1-s\}$ and
	\begin{equation}  \label{eq:delta smooth}
		|(-\Delta)^s \delta^s| \le C_\Omega \qquad \textrm{ in } \Omega \cap \{ \delta < \rho_0  \}.
	\end{equation}
In order to simply the calculations, it is convenient to replace $\delta^s$ by the first eigenfunction of the fractional Laplacian $\varphi_1$, which is positive and smooth everywhere inside $\Omega$ and satisfies  exactly the same boundary behaviour ($\varphi_1 \asymp \delta^s$).

\medskip

3) In $\Xs$ we can only ask for $\mathcal C^s (\Omega)$ smoothness, because, when $\Omega = B_R$, we will want to approximate
	\begin{equation*}
	\begin{dcases}
	(-\Delta)^s \varphi = 1 & B_R \\
	\varphi = 0 & \mathbb R^n \setminus B_R
	\end{dcases}
	\end{equation*}
	which is
	\begin{equation*}
	\varphi (x) = C\left( R^2 - |x|^2 \right)^s
	\end{equation*}
	only of class $\mathcal C^s$. Nonetheless, $\varphi/\delta^s$ can be shown to be smoother. In this case, it is infinitely differentiable, whereas $\varphi$ is not.

\medskip

4) Definition \ref{defn:vws Brezis no potential} corresponds to the notion of {\sl weak dual solution} proposed and used in \cite{Bonforte2015}:
\begin{equation} \label{eq:weak dual solution}
\int_ \Omega u \psi = \int_ \Omega f [(-\Delta)^{s}]^{-1} \psi
\end{equation}
where $[(-\Delta)^{s}]^{-1}$ is the solution operator. We will make a detailed comment about the interpretation of this kind of solution in Section \ref{sec:Green operator viewpoint}.

\medskip
	
	5) By the formula of integration by parts, it is clear that any very weak solution with $f \in L^2(\Omega)$ that is also in $H_0^s (\Omega)$ is a weak solution.
\end{remarks}

Chen and V\'eron \cite{chen+veron2014} seem to have been the first to apply this approach to the fractional case. They proved the following results:
\begin{theorem}[\cite{chen+veron2014}] \label{thm:chen+veron}
	Let $f \delta^s \in L^1( \Omega)$. Then, there exists exactly one very weak solution $u \in L^1 (\Omega)$ of Problem \eqref{eq:FDE Brezis no potential}.  If $f \ge 0$, then $u \ge 0$. Hence, the Maximum Principle holds.
\end{theorem}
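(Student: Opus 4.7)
The plan is to handle uniqueness by duality against the test class $\Xs$, existence by bounded-data approximation backed by a dual $L^1$ bound, and positivity by passing the classical weak maximum principle to the limit. The workhorse throughout is \Cref{prop:Ros-Oton}, which supplies the $C^s(\mathbb{R}^n)$ regularity and the sharp boundary decay $|u|\le C\|f\|_{L^\infty(\Omega)}\delta^s$ for bounded data; without this precise decay the weight $\delta^s$ in the data class would not be optimal and the duality argument would fail to close. For \emph{uniqueness} I take $w=u_1-u_2\in L^1(\Omega)$, so that $\int_\Omega w\,(-\Delta)^s\varphi\,dx=0$ for all $\varphi\in\Xs$, and for any $g\in\mathcal{C}_c^\infty(\Omega)$ solve $(-\Delta)^s\varphi_g=g$ in $\Omega$ with $\varphi_g=0$ outside via Lax--Milgram in $H_0^s(\Omega)$; by \Cref{prop:Ros-Oton} this $\varphi_g$ sits in $\Xs$, and $\int_\Omega wg\,dx=0$ on a dense class forces $w\equiv 0$ a.e.

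For \emph{existence} I truncate by $f_k=T_k(f)\chi_{\{\delta>1/k\}}$, solve by Lax--Milgram for $u_k\in H_0^s(\Omega)$, and note that by \Cref{prop:fractional integration by parts} this $u_k$ is already a very weak solution. The essential a priori estimate is the dual $L^1$ bound
\begin{equation*}
\|u_k\|_{L^1(\Omega)}\le C\,\|f_k\,\delta^s\|_{L^1(\Omega)},
\end{equation*}
which I obtain by testing against $g\in\mathcal{C}_c^\infty(\Omega)$ with $\|g\|_\infty\le 1$: the adjoint solution $\psi_g\in\Xs$ satisfies $|\psi_g|\le C\delta^s$ by \Cref{prop:Ros-Oton}, hence
\begin{equation*}
\int_\Omega u_k g\,dx=\int_\Omega u_k(-\Delta)^s\psi_g\,dx=\int_\Omega f_k\psi_g\,dx\le C\int_\Omega |f_k|\,\delta^s\,dx,
\end{equation*}
and taking the supremum gives the bound. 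Since $f_k\to f$ in $L^1(\Omega;\delta^s)$ by dominated convergence, applying the same estimate to $u_k-u_j$ makes $\{u_k\}$ Cauchy in $L^1(\Omega)$; let $u$ be its limit. For any $\varphi\in\Xs$ the bounds $|\varphi|\le C\delta^s$ and $(-\Delta)^s\varphi\in L^\infty(\Omega)$ let me pass to the limit in
\begin{equation*}
\int_\Omega u_k(-\Delta)^s\varphi\,dx=\int_\Omega f_k\varphi\,dx,
\end{equation*}
identifying $u$ as a very weak solution of \eqref{eq:FDE Brezis no potential}.

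For \emph{positivity}, when $f_k\ge 0$ the classical weak maximum principle gives $u_k\ge 0$: testing the weak formulation against $u_k^-\in H_0^s(\Omega)$ and using the double-integral representation \eqref{double} produces $[u_k^-]_{H^s(\mathbb{R}^n)}^2\le-\int_\Omega f_k u_k^-\le 0$, whence $u_k^-\equiv 0$; passing to the $L^1$ limit yields $u\ge 0$, and the comparison principle follows by linearity applied to $f_1-f_2$. The main obstacle I anticipate is entirely concentrated in verifying that the adjoint solution $\psi_g$ lies in $\Xs$ with the sharp boundary decay $|\psi_g|\le C\delta^s$; this is exactly the content of \Cref{prop:Ros-Oton}, and once it is in hand the remaining pieces of the argument are a routine approximation in $L^1$.
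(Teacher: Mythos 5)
The paper cites this result to Chen--V\'eron \cite{chen+veron2014} without reproducing their argument, so a direct comparison is not possible; what you have supplied is a self-contained reconstruction, and it is correct. Your route is, however, genuinely different from the one the original authors take: the paper explicitly remarks that Chen--V\'eron's proof ``does not apply Green function estimates,'' whereas your entire argument pivots on \Cref{prop:Ros-Oton}, i.e.\ the sharp boundary regularity $|\psi_g|\le C\|g\|_\infty\,\delta^s$ for the adjoint solution. That trade is reasonable: it lets you run a clean duality argument for both the uniqueness step ($\varphi_g=\mathrm{G}_s g\in\Xs$ for $g\in C_c^\infty$, hence $\int w g=0$ for all such $g$, so $w=0$) and the a priori bound $\|u_k\|_{L^1}\le C\|f_k\delta^s\|_{L^1}$, and it reuses machinery the paper has already set up, but it does make the proof rely on boundary regularity that Chen--V\'eron circumvent. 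The positivity step via testing with $u_k^-$ and the double-integral form \eqref{double} is correct: writing $u_k=u_k^+-u_k^-$, the cross term $(u_k^+(x)-u_k^+(y))(u_k^-(x)-u_k^-(y))$ is pointwise nonpositive, which forces $c\,[u_k^-]_{H^s(\mathbb R^n)}^2\le-\int_\Omega f_k u_k^-\le 0$ and then $u_k^-\equiv 0$ by Poincar\'e in $H^s_0(\Omega)$.

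Two small points to tighten. First, membership $\psi_g\in\Xs$ requires not only $\psi_g\in C^s(\mathbb R^n)$ but also that the principal-value integral $(-\Delta)^s\psi_g$ equals $g$ pointwise inside $\Omega$; this is true but uses interior regularity (for $g\in L^\infty$ the solution is $C^\infty_{\mathrm{loc}}(\Omega)$, or one can differentiate the Green representation), and it is worth saying so since $\psi_g$ is only $C^s$ globally. Second, your duality step takes the supremum over $g\in C_c^\infty(\Omega)$ with $\|g\|_\infty\le1$ to recover $\|u_k\|_{L^1}$, which needs a density/weak-$\star$ argument; it is cleaner to run the estimate directly for all $g\in L^\infty(\Omega)$, since \Cref{prop:Ros-Oton} already covers that class and one gets $\|u_k\|_{L^1}=\sup_{\|g\|_\infty\le1}\int u_k g$ with no approximation required. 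Neither issue affects the validity of the argument.
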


\begin{remarks} 1) We point out that the authors also treat semilinear problems of the form $(-\Delta)^s u+ g(u)=f$, and that their work does not apply Green function estimates. The authors work with the more general class of measure data ${\mathcal M}(\Omega, \delta^s)$.

2) The Maximum  Principle allows for the definition of super- and subsolutions that can be useful in getting estimates.

3) A reference to optimal regularity for the fractional case
is \cite{KuusiMR3339179} which also includes nonlinear fractional elliptic problems with $p$-Laplacian type growth. The right-hand side data are locally bounded measures. When $f$ has further regularity the solutions are smooth to different degrees by the representation via the Green kernel \Cref{eq:Green operator}.

4) More properties of the solutions will be examined below and in the study of the Schr\"odinger equation.
\end{remarks}

With the formulation \eqref{eq:FDE Brezis no potential} we can  precisely state the Kato inequality in a general way. The proof for the fractional operator is also due to Chen and V\'eron \cite{chen+veron2014}

\begin{lemma}[Kato's inequality \cite{chen+veron2014}] \label{lem:Kato}
	Let $f \in L^1 (\Omega, \delta^s)$ and $u \in L^1 (\Omega)$ be a solution of \eqref{eq:FDE Brezis no potential}. Then
	\begin{align}
			\int_{ \Omega } |u| (-\Delta)^s \varphi &\le \int_{ \Omega } \sign(u) f \varphi , \\
			\int_{ \Omega } u_+ (-\Delta)^s \varphi &\le \int_{ \Omega } \sign_+(u) f \varphi
	\end{align}
	hold for all $\varphi \in \Xs$, $\varphi\ge0$.
\end{lemma}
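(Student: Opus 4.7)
The plan is to reduce the inequality for the very weak solution $u$ to the classical pointwise Kato inequality applied to smooth approximating solutions, and then pass to the limit in two stages. First I would approximate the data: pick $f_n \in C_c^\infty(\Omega)$ with $f_n \to f$ in $L^1(\Omega,\delta^s)$, and let $u_n$ be the corresponding weak (and therefore, by Remark 3.3.1, very weak) solution. By the linearity of the very weak formulation and the $L^1$-contraction / uniqueness in Theorem \ref{thm:chen+veron}, we get $u_n \to u$ in $L^1(\Omega)$ and, along a subsequence, a.e.\ in $\Omega$. Moreover, by Proposition \ref{prop:Ros-Oton}, each $u_n$ belongs to $\mathcal C^s(\mathbb R^n)$, vanishes in $\mathbb R^n\setminus\Omega$, and has $(-\Delta)^s u_n = f_n \in L^\infty(\Omega)$.

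Next, I would choose a family $G_\varepsilon \in C^2(\mathbb R)$ of convex functions with $G_\varepsilon(0)=0$, $G_\varepsilon \searrow |\cdot|$, $|G_\varepsilon'|\le 1$, and $G_\varepsilon'(t)\to \sign(t)$ pointwise (with $G_\varepsilon'(0)=0$). For a smooth $u_n$ that vanishes outside $\Omega$, convexity yields the pointwise convex-composition inequality
\begin{equation*}
G_\varepsilon(u_n(x)) - G_\varepsilon(u_n(y)) \le G_\varepsilon'(u_n(x))\bigl(u_n(x) - u_n(y)\bigr),
\end{equation*}
and integrating this in $y$ against the singular kernel $|x-y|^{-n-2s}$ gives
\begin{equation*}
(-\Delta)^s G_\varepsilon(u_n)(x) \le G_\varepsilon'(u_n(x))\,(-\Delta)^s u_n(x) = G_\varepsilon'(u_n(x))\,f_n(x)
\end{equation*}
a.e. in $\Omega$. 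Since $G_\varepsilon(u_n)=0$ outside $\Omega$, Proposition \ref{prop:fractional integration by parts} (extended by density to $u_n$ and to $\varphi\in \Xs$) applies, and multiplying by $\varphi\in\Xs$ with $\varphi \ge 0$ gives
\begin{equation*}
\int_\Omega G_\varepsilon(u_n)\,(-\Delta)^s\varphi\,dx = \int_\Omega \varphi\,(-\Delta)^s G_\varepsilon(u_n)\,dx \le \int_\Omega \varphi\, G_\varepsilon'(u_n)\, f_n\,dx.
\end{equation*}

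Now I would pass to the limit $\varepsilon\to 0$ using dominated convergence: $G_\varepsilon(u_n)\to |u_n|$ with $G_\varepsilon(u_n)\le |u_n|+1$, and $G_\varepsilon'(u_n)\to \sign(u_n)$ pointwise with $|G_\varepsilon'(u_n)|\le 1$, yielding
\begin{equation*}
\int_\Omega |u_n|\,(-\Delta)^s\varphi\,dx \le \int_\Omega \varphi\,\sign(u_n)\,f_n\,dx.
\end{equation*}
The final step is to let $n\to\infty$. The left side is immediate from $u_n\to u$ in $L^1(\Omega)$ and $(-\Delta)^s\varphi\in L^\infty$. For the right side, note that by the second part of Proposition \ref{prop:Ros-Oton} one has $|\varphi(x)|\le C\,\|(-\Delta)^s\varphi\|_\infty\,\delta(x)^s$, so $\varphi f_n\to \varphi f$ in $L^1(\Omega)$, and passing to a further subsequence with $u_n\to u$ a.e.\ gives $\sign(u_n)\to \sign(u)$ a.e.\ on $\{u\neq 0\}$. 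On $\{u=0\}$, the factor $\sign(u_n)$ remains bounded by $1$, so dominated convergence of $\varphi f_n$ in $L^1$ closes the estimate.

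The truncated-positive-part inequality follows in the same way by using instead smooth convex approximations $G_\varepsilon$ of $t_+$ with $G_\varepsilon(0)=0$ and $G_\varepsilon'(t)\to \sign_+(t)$. The main technical point to watch is the behaviour of $\sign(u_n)$ on the set where $u$ vanishes, which is handled cleanly by the $\delta^s$-weighted convergence of $f_n$ together with the boundary estimate $|\varphi|\le C\delta^s$ built into the class $\Xs$.
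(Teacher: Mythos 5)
Your overall plan—approximate $f$ by smooth data, use the pointwise Kato inequality via smooth convex approximations $G_\varepsilon$ of $|\cdot|$, integrate by parts against $\varphi\in\Xs$, and then pass to the limit—is close in spirit to the argument the paper actually gives (\Cref{Kato} in \Cref{section:Kato proof}), which also reduces to a pointwise Kato statement for smooth functions and then runs a two-parameter approximation. But there is a genuine gap in the last step, and it is precisely the subtlety the paper's own proof is organized to avoid.

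The gap is the order in which you take the two limits. You send $\varepsilon\to0$ \emph{first}, at each fixed $n$, obtaining
\[
\int_\Omega |u_n|\,(-\Delta)^s\varphi \le \int_\Omega \sign(u_n)\,\varphi f_n,
\]
and only then let $n\to\infty$. On $\{u\ne0\}$ the right side converges as you say, but on $\{u=0\}$ the sequence $\sign(u_n)$ is merely bounded; there is no reason for it to converge to $\sign(u)=0$, and ``dominated convergence of $\varphi f_n$'' does not settle anything here. What dominated convergence gives is $\int_{\{u=0\}}|\varphi||f_n|\to\int_{\{u=0\}}|\varphi||f|$, so you only obtain $\limsup_n\int\sign(u_n)\varphi f_n\le\int_{\{u\ne0\}}\sign(u)\varphi f + \int_{\{u=0\}}|\varphi||f|$, which can be strictly larger than $\int_\Omega\sign(u)\varphi f$. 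Equivalently, along a subsequence $\sign(u_n)\rightharpoonup\xi$ in $L^\infty$-weak-$\star$ with $\xi\in\widetilde\sign(u)$ on $\{u=0\}$ but possibly $\xi\ne0$ there, and you cannot get rid of the $\xi$.

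The fix is to swap the order: first fix $\varepsilon$ and let $n\to\infty$. Since $G_\varepsilon'$ is \emph{continuous}, $G_\varepsilon'(u_n)\to G_\varepsilon'(u)$ a.e.\ along the subsequence, $|G_\varepsilon'|\le1$, and $\varphi f_n\to\varphi f$ in $L^1$ (by your $|\varphi|\le C\delta^s$ observation), so dominated convergence does apply and gives $\int_\Omega G_\varepsilon(u)(-\Delta)^s\varphi\le\int_\Omega G_\varepsilon'(u)\varphi f$. Only then do you send $\varepsilon\to0$: with a choice such as $G_\varepsilon(t)=\sqrt{t^2+\varepsilon^2}-\varepsilon$ one has $G_\varepsilon'(0)=0$ and $G_\varepsilon'(u)\to\sign(u)$ a.e.\ with the convention $\sign(0)=0$, dominated by $1$, which yields the claimed inequality. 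This is exactly the structure of the paper's proof in \Cref{section:Kato proof}: there the approximation index $k$ is sent to infinity \emph{with the smoothing parameter $\varepsilon$ fixed} (using the continuity of the functions $\overline\gamma_\varepsilon,\underline\gamma_\varepsilon$ and the sign-compatibility of the truncations $f_k=T_k(f)$), and only afterwards is $\varepsilon\to0$ performed, producing a weak-$\star$ limit $\xi\in\widetilde\sign(u)$. Your use of a single convex mollifier $G_\varepsilon$ instead of the upper/lower envelopes $\overline\gamma_\varepsilon,\underline\gamma_\varepsilon$ is a perfectly good variant, and with the corrected order of limits it even gives the cleaner single-valued $\sign$ with $\sign(0)=0$; but as written, the $n\to\infty$ step does not close.
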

	

\subsection{A quantitative lower Hopf principle for data in $L^1 (\Omega, \delta^s)$}

By uniqueness and approximation we easily see that  the preceding solutions admit a  representation via the Green kernel \eqref{eq:Green operator}, and estimates \eqref{eq:Green function estimates} we can prove an adapted lower Hopf inequality. The classical case $s=1$ was first stated in this form in \cite{Diaz+Morel+Oswald:1987}.

\begin{proposition} \label{prop:Hopf}
	 Let $0 \le f \in L^1 (\Omega, \delta^s)$ and let $u \in L^1 (\Omega)$ be the unique very weak solution of \eqref{eq:FDE Brezis no potential}. Then
\begin{equation}\label{eq:Hopf}
	u(x) \ge c \delta(x)^s \int_ \Omega f(y) \delta(y)^s
\end{equation}
a.e. $x \in \Omega$, where $c>0$ depends only on $\Omega$.
\end{proposition}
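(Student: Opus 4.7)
The plan is to represent the very weak solution $u$ by the Green kernel and then extract a pointwise two-variable lower bound on $\mathbb{G}_s$ directly from the sharp estimate \eqref{eq:Green function estimates}.

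First, I would verify the representation $u(x) = \int_\Omega \mathbb{G}_s(x,y) f(y)\,dy$ for a.e.\ $x \in \Omega$. For $f \in L^\infty(\Omega)$ this is \eqref{eq:Green operator}, and the Ros-Oton--Serra regularity of \Cref{prop:Ros-Oton} makes such a $u$ a legitimate very weak solution in the sense of \Cref{defn:vws Brezis no potential}. For a general $0 \le f \in L^1(\Omega, \delta^s)$ I would truncate $f_k := \min\{f, k\}\,\mathbf{1}_{\{\delta > 1/k\}}$, so that $0 \le f_k \uparrow f$ pointwise and in $L^1(\Omega, \delta^s)$. By the maximum principle of \Cref{thm:chen+veron} the associated very weak solutions $u_k$ are monotone increasing and admit the Green representation, so monotone convergence yields $u_k(x) \uparrow \int_\Omega \mathbb{G}_s(x,y) f(y)\,dy$ pointwise in $\Omega$. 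The linear $L^1$-stability $\|u_k - u\|_{L^1(\Omega)} \lesssim \|f_k - f\|_{L^1(\Omega, \delta^s)}$ together with uniqueness in \Cref{thm:chen+veron} identifies the limit with $u$.

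Second, I would establish the pointwise bound $\mathbb{G}_s(x,y) \ge c_\Omega\, \delta(x)^s \delta(y)^s$ for all $x, y \in \Omega$. Let $D$ be any upper bound on the diameter of $\Omega$, so that $|x-y| \le D$ and $\delta(x), \delta(y) \le D$. I split on whether $\delta(x) \le |x-y|$ or not: in the first case the factor $(\delta(x)/|x-y| \wedge 1)^s$ equals $\delta(x)^s/|x-y|^s \ge \delta(x)^s/D^s$, and in the second case it equals $1 \ge \delta(x)^s/D^s$. The symmetric bound holds in $y$, and $|x-y|^{-(n-2s)} \ge D^{-(n-2s)}$ since $n \ge 2 > 2s$. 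Multiplying the three inequalities and invoking \eqref{eq:Green function estimates} yields the claimed bound with $c_\Omega = c\, D^{-(n+2s)}$.

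Finally, integrating this pointwise lower bound against $f \ge 0$ inside the Green representation delivers \eqref{eq:Hopf}. The only slightly delicate point is the approximation step that lifts the Green representation from bounded data to the full class $L^1(\Omega, \delta^s)$; the manipulation of the kernel estimate itself is entirely routine and the constant $c$ in \eqref{eq:Hopf} depends only on $\Omega$, through its diameter and the constant in \eqref{eq:Green function estimates}.
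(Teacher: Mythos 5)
Your proof takes essentially the same route as the paper: represent $u$ by the Green kernel and extract the pointwise lower bound $\mathbb{G}_s(x,y)\ge c_\Omega\,\delta(x)^s\delta(y)^s$ from \eqref{eq:Green function estimates}, then integrate against $f\ge 0$; you are simply more explicit than the paper both about lifting the representation from $L^\infty$ to $L^1(\Omega,\delta^s)$ data and about how the kernel bound follows from the diameter of $\Omega$. One tiny arithmetic slip: $D^{-(n-2s)}\cdot D^{-s}\cdot D^{-s}=D^{-n}$, not $D^{-(n+2s)}$, though this does not affect the conclusion since only positivity of the constant is needed.
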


\begin{proof}
	We first  show that
	\begin{equation} \label{eq:lower bound Green function}
	\mathbb G_s(x,y) \ge c (x) \delta(y)^s
	\end{equation}
	for all $x, y \in \Omega$, and $c > 0$ in $\Omega$. Let $x \in \Omega$. Applying \eqref{eq:Green function estimates}, it is clear that
	\begin{equation}
	c(x) = \inf_{y \in \bar \Omega} \frac{1}{\delta(y)^s |x-y|^{n-2s}} \left( \frac{\delta(x)}{|x-y|} \wedge 1 \right)^s \left( \frac{\delta(y)}{|x-y|} \wedge 1 \right)^s
	\end{equation}
	is reached at some point $y^*$. It is easy to see that $c(x) \ge c \delta(x)^s$ where $c > 0$. Therefore
	\begin{equation}
	u(x) =  \int_{ \Omega } \mathbb G _s (x,y) f(y) dy \ge c \delta(x)^s \int_{ \Omega } f(y) \delta^s (y) dy .
	\end{equation} This completes the proof.
\end{proof}

 The strict positivity of solutions with nonnegative data, in particular the behaviour near the boundary, has been studied in \cite{BFV2018}. There exists also a wide literature for parabolic equations, both linear and nonlinear.

\subsection{$L^1(\Omega; \delta^s)$ as an optimal class of data}
\label{sec:L1 weighted optimal class}

Through the Hopf inequality it is easy to show that this is largest space to look for solutions if we want to keep the class of weak solutions for bounded data and the maximum principle. The nonexistence of solution for such data is a consequence of the following blow-up result.

\begin{proposition} Let $f$ be a nonnegative function such that $f\notin L^1 (\Omega, \delta^s)$, and let $f_k$ be a sequence of  approximations by bounded functions, $f_k\le f$, $f_k\to f$ a.e. in $\Omega$.  Then, $u_k = (-\Delta)^{-s} f_k\to \infty$  in $\Omega$.
\end{proposition}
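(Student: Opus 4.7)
The plan is to reduce everything to a single application of the quantitative lower Hopf inequality (Proposition \ref{prop:Hopf}). For each $k$ the datum $f_k$ is bounded and nonnegative, hence certainly in $L^1(\Omega,\delta^s)$, so the existence/uniqueness result of Chen--V\'eron (Theorem \ref{thm:chen+veron}) applies and produces a nonnegative very weak solution $u_k = (-\Delta)^{-s}f_k \in L^1(\Omega)$.

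Applying Proposition \ref{prop:Hopf} to $u_k$ yields, for a.e.\ $x \in \Omega$,
\begin{equation*}
u_k(x) \ge c\, \delta(x)^s \int_\Omega f_k(y)\,\delta(y)^s\, dy,
\end{equation*}
with a constant $c>0$ depending only on $\Omega$. Since $0\le f_k \le f$ and $f_k \to f$ a.e., Fatou's lemma gives
\begin{equation*}
\liminf_{k\to\infty}\int_\Omega f_k(y)\,\delta(y)^s\, dy \ge \int_\Omega f(y)\,\delta(y)^s\, dy = +\infty,
\end{equation*}
where the last equality is the hypothesis $f\notin L^1(\Omega,\delta^s)$. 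Hence $\int_\Omega f_k\,\delta^s \to +\infty$, and fixing any $x\in \Omega$ (so that $\delta(x)>0$) we conclude $u_k(x)\to +\infty$. This proves the pointwise blow-up asserted in the statement.

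I do not anticipate a serious obstacle here: the proof is a clean Hopf + Fatou argument. The only subtlety worth flagging is that one should not try to invoke monotone convergence, because the hypothesis on the approximations $f_k$ only requires $f_k\le f$ with a.e.\ convergence and not monotonicity; Fatou is sufficient since $f_k\delta^s\ge 0$, and a $\liminf$ equal to $+\infty$ forces the limit to exist and equal $+\infty$. One may also remark that the same argument upgrades the conclusion to locally uniform blow-up on any set $\{\delta\ge \varepsilon\}$, since $\delta(x)^s$ is bounded below there by $\varepsilon^s$.
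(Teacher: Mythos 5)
Your proof is correct and takes essentially the same approach as the paper: apply the quantitative Hopf inequality (Proposition \ref{prop:Hopf}) to each $u_k$ and pass to the limit in the integral $\int_\Omega f_k \delta^s$. Your use of Fatou's lemma to justify the limit is a welcome clarification of a step the paper leaves terse, since the hypotheses only give $f_k \le f$ with a.e.\ convergence and not monotonicity.
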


\begin{proof}  Applying \Cref{prop:Hopf} we have that
\begin{equation}
	u_k(x) \ge   c \delta(x)^s  \int_{ \Omega } f_k(y) \delta^s (y) dy.
\end{equation}
Passing to the limit $k\to\infty$ we would arrive at $u(x)\ge \lim_k u_k(x)=+\infty$.
\end{proof}

\begin{remark} In the limit this optimality can be extended to measure data in the class
${\mathcal M}(\Omega, \delta^s)$.
\end{remark}

\subsection{Traces of very weak solutions and boundary weighted integrability}

The definition of weak solution includes a very clear sense of zero boundary trace since $u\in H^s_0(\Omega)$. However, the definition of very weak solution merely requires  $u\in L^1 (\Omega)$. Clearly, since the  space $L^1 (\Omega)$ does not have a boundary trace operator there is a question about the sense in which the solution $u$ takes null boundary data on $\partial \Omega$. In the classical case $s=1$ some authors have proposed to study local solutions of the Laplace equation inside $\Omega$ that satisfy a generalized $0$ boundary condition of the form $u/\delta \in L^1(\Omega)$. Always for $s=1$, Kufner \cite{Kufner1980} was amongst the first to notice that this kind of singular weights give significant boundary information. We recall that, for $p > 1$, the classical Hardy inequality implies that
\begin{equation}
u \in W^{1,p}_0 (\Omega) \iff \begin{dcases}
u \in W^{1,p}(\Omega) \textrm{ and}\\ \frac{u}{\delta} \in L^p (\Omega).
\end{dcases}
\end{equation}
For $p=1$ this result is no longer true.

The convenience of using the  integral condition ${u}/{\delta} \in L^1 (\Omega)$ as a kind of generalized boundary condition instead of a standard trace condition has been observed recently (see \cite{diaz+gc+rakotoson2017schrodinger}). Moreover, Rakotoson showed in \cite{Rakotoson2012} the following equivalence:
\begin{equation} \label{eq:Rakotoson necessary and suf condition u delta in L1}
\frac u \delta \in L^1 (\Omega) \iff f \delta ( 1 + |\log \delta |) \in L^1 (\Omega) .
\end{equation}

When considering the fractional Dirichlet problem we have found that the appropriate weight is $\delta^s$. To begin with, there is a Hardy inequality for these operators
\begin{proposition}[\cite{Ihnatsyeva2014}] \label{thm:Hardy inequality}
	Let $1 < p <\infty$ and $0 < s < 1$ be such that $sp < n$. Let $\Omega \subset \mathbb R^n$ be an open set, $n \ge 2$, with regular boundary. Then, for, every $u \in C_0^\infty (\Omega)$,
	\begin{equation}
	\int_ {\Omega} \frac{|u(x)|^p}{\delta(x)^{sp}} \le c_0 \int_{ \mathbb R^n } \int_{ \mathbb R^n } \frac{|u(x) - u(y)|^p}{|x-y|^{n+sp}} dy dx,
	\end{equation}
	where $u$ is extended by $0$ outside $\Omega$.
\end{proposition}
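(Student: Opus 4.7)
My plan is to first establish the inequality on the half-space $\mathbb R^n_+$ with the natural weight $x_n$, and then transfer it to a general $C^2$ domain by a boundary-straightening atlas combined with a smooth partition of unity.

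\textbf{Half-space case.} For $u\in C_c^\infty(\mathbb R^n_+)$, extended by zero outside, I would fix $x=(x',x_n)\in\mathbb R^n_+$ and exploit the fact that $u\equiv 0$ on the reflected ball $B_x := B((x',-x_n),\,x_n/2)\subset\{y_n<0\}$. A direct geometric check shows $|x-y|\asymp x_n$ for $y\in B_x$ and $|B_x|\asymp x_n^n$, so
\begin{equation*}
\frac{|u(x)|^p}{x_n^{sp}} \;\le\; C \int_{B_x}\frac{|u(x)-u(y)|^p}{|x-y|^{n+sp}}\,dy \;\le\; C\int_{\mathbb R^n}\frac{|u(x)-u(y)|^p}{|x-y|^{n+sp}}\,dy.
\end{equation*}
Integrating in $x$ over $\mathbb R^n_+$ and applying Fubini yields the half-space version of the Hardy inequality.

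\textbf{Transfer to $\Omega$.} Since $\partial\Omega$ is $C^2$, cover it by finitely many balls $B_j$ in each of which a $C^2$-diffeomorphism $\Phi_j$ straightens $\partial\Omega\cap B_j$ onto $\{y_n=0\}$; take an interior open set $\Omega_0\Subset\Omega$ so that $\{B_j\}\cup\{\Omega_0\}$ covers $\overline{\Omega}$, and pick a smooth subordinate partition of unity $\{\chi_j\}\cup\{\chi_0\}$. In each boundary chart, $\delta(x)$ is comparable to the $n$-th component of $\Phi_j(x)$ and $\Phi_j$ is bi-Lipschitz, so applying the half-space inequality to $(\chi_j u)\circ\Phi_j^{-1}$ and changing variables transfers the estimate back to $\chi_j u$ with constants depending only on the chart. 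On $\mathrm{supp}(\chi_0)$ the distance $\delta$ is bounded below, hence $\int|\chi_0 u|^p/\delta^{sp}$ is dominated by $C\|u\|_p^p$, which is in turn controlled by $[u]_{W^{s,p}(\mathbb R^n)}^p$ via the fractional Poincar\'e inequality available for functions supported in the bounded set $\Omega$.

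\textbf{Main obstacle.} The delicate step is bounding $[\chi_j u]_{W^{s,p}(\mathbb R^n)}^p$ by $C[u]_{W^{s,p}(\mathbb R^n)}^p+C\|u\|_p^p$, because the nonlocal seminorm does not obey a clean Leibniz rule. Starting from
\begin{equation*}
|\chi_j u(x)-\chi_j u(y)|^p \;\le\; 2^{p-1}\bigl(\|\chi_j\|_\infty^p|u(x)-u(y)|^p + |u(y)|^p|\chi_j(x)-\chi_j(y)|^p\bigr),
\end{equation*}
the first term contributes $[u]_{W^{s,p}}^p$ directly, while the second reduces to the estimate $\sup_y\int|\chi_j(x)-\chi_j(y)|^p|x-y|^{-n-sp}\,dx<\infty$, which holds for smooth compactly supported $\chi_j$ as long as $s<1$. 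The hypothesis $sp<n$ then enters in keeping the aggregated constants scale-independent across charts; once this estimate is in place, summing the local inequalities gives the global Hardy inequality with a constant depending only on $n$, $p$, $s$ and $\Omega$.
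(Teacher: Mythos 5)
The paper does not prove this proposition; it simply cites it from \cite{Ihnatsyeva2014}, which establishes a considerably more general fractional Hardy inequality (valid for open sets whose complement satisfies a visibility/capacity-type thickness condition, possibly unbounded) by quite different means. Your argument is instead a direct, self-contained proof tailored to the bounded $C^2$ case, and it is essentially correct. The reflected-ball argument on the half-space is the standard pointwise fractional Hardy bound and works exactly as you say; the transfer by flattening charts plus a partition of unity is the classical route; and the Leibniz-type estimate for the Gagliardo seminorm of $\chi_j u$ is handled by the right quantity, namely $\sup_y\int_{\mathbb R^n}|\chi_j(x)-\chi_j(y)|^p|x-y|^{-n-sp}\,dx<\infty$, which is finite for smooth compactly supported $\chi_j$ precisely because $p(1-s)>0$. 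The interior piece via fractional Poincar\'e for functions supported in the bounded set $\Omega$ is also fine.

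Two remarks. First, your claim about where $sp<n$ enters is a misdiagnosis: that hypothesis is not used anywhere in your argument, and in fact the inequality as stated here (Gagliardo seminorm taken over $\mathbb R^n\times\mathbb R^n$, $u$ extended by zero, $\Omega$ bounded with $C^2$ boundary) holds for all $0<s<1$ and $1\le p<\infty$; the restriction $sp<n$ is simply inherited from the more general framework of \cite{Ihnatsyeva2014} and is not needed for the bounded regular case. Second, when you change variables in the nonlocal double integral you need $\Phi_j$ to be a bi-Lipschitz map of all of $\mathbb R^n$ (not merely of the chart $B_j$), agreeing with the flattening diffeomorphism on a neighbourhood of $\mathrm{supp}(\chi_j)$ and equal to the identity far away; this extension is routine for a $C^2$ boundary, but since the Gagliardo seminorm sees all pairs $(x,y)\in\mathbb R^n\times\mathbb R^n$, the point deserves to be made explicit rather than left implicit in the phrase ``$\Phi_j$ is bi-Lipschitz''.
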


Note that for $p=2$ then $\frac n p \ge 1$, so the condition $0 < s < \frac{n}{p}$ is trivial.
Hence, for $u \in H^s_0 (\Omega)$, we know that ${u}/{\delta^s} \in L^2$.

In order to present our results we need to introduce a new test function:
\begin{equation}
\begin{dcases}
(-\Delta)^s \varphi_{\delta} = \frac{1}{\delta^s} & \Omega \\
\varphi_\delta = 0 & \Omega^c.
\end{dcases}
\end{equation}
We prove the following theorem:

\begin{proposition} \label{prop:nec and suf condition for u over deltas in L1}
	Let $f \delta^s \in L^1 (\Omega)$ and let $u \in L^1 (\Omega)$ be the solution of \eqref{eq:FDE Brezis no potential}. Then, $u/\delta^s \in L^1(\Omega)$ if and only if $f \varphi_{ \delta } \in L^1 (\Omega)$.
\end{proposition}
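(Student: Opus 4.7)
\emph{Proof proposal.} The strategy is to use $\varphi_\delta$ as a test function in the very weak formulation \eqref{eq:FDE Brezis no potential}. Since $(-\Delta)^s \varphi_\delta = \delta^{-s}$ and, by the comparison principle applied to its defining equation, $\varphi_\delta \ge 0$, one formally obtains the identity
\begin{equation*}
\int_\Omega \frac{u}{\delta^s}\, dx = \int_\Omega f \varphi_\delta\, dx,
\end{equation*}
and the equivalence should follow once both sides are promoted to absolutely convergent integrals.

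The first task is to justify this choice of test function, since $\delta^{-s}\notin L^\infty(\Omega)$ implies $\varphi_\delta \notin \Xs$. I would approximate: set $g_k := \min(\delta^{-s}, k) \in L^\infty(\Omega)$ and let $\varphi_k \in \Xs$ solve $(-\Delta)^s \varphi_k = g_k$ with zero exterior condition; Proposition~\ref{prop:Ros-Oton} gives $\varphi_k \in \mathcal{C}^s(\mathbb{R}^n)$, and the maximum principle yields $0 \le \varphi_k \uparrow \varphi_\delta$ monotonically. Testing \eqref{eq:FDE Brezis no potential} against $\varphi_k$ produces
\begin{equation*}
\int_\Omega u\, g_k\, dx = \int_\Omega f\, \varphi_k\, dx.
\end{equation*}
If $f\ge 0$, then $u \ge 0$ by \Cref{thm:chen+veron}, and two applications of monotone convergence (using $g_k \uparrow \delta^{-s}$ and $\varphi_k \uparrow \varphi_\delta$) give $\int_\Omega u/\delta^s = \int_\Omega f \varphi_\delta$ in $[0,+\infty]$, which proves the equivalence when $f$ has a single sign.

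For general $f$, I would decompose $f = f^+ - f^-$ and, by linearity and uniqueness, $u = u_1 - u_2$ with $u_i \ge 0$ the very weak solutions associated with the data $f^\pm$. The previous step applied to each gives $\int u_i/\delta^s = \int f^\pm \varphi_\delta$ in $[0,+\infty]$; since $f^+$ and $f^-$ have disjoint supports, $|f|\varphi_\delta \in L^1$ is equivalent to $(u_1+u_2)/\delta^s \in L^1$. Because $|u| \le u_1 + u_2$, the direction $(\Leftarrow)$ is immediate. For the converse, the linear identity $u_1 = u + u_2$ (both sides nonnegative on the left) shows that once $u/\delta^s \in L^1$ one has $u_1/\delta^s \in L^1 \iff u_2/\delta^s \in L^1$, so the only obstruction to $(\Rightarrow)$ is the case in which both $u_i/\delta^s$ fail to lie in $L^1$ simultaneously while $|u|/\delta^s$ does.

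The main obstacle will therefore be ruling out this ``$\infty-\infty$'' cancellation scenario, and the key should be that the canonical parts $f^+$ and $f^-$ live on disjoint sets. I expect to handle it by applying Kato's inequality (\Cref{lem:Kato}) to the tested identity and passing to the limit $k\to\infty$ on the positive and negative parts separately: the disjoint-support structure of $f^\pm$ prevents the two nonnegative Green potentials $u_1$ and $u_2$ from coinciding closely enough to be simultaneously outside $L^1(\delta^{-s})$ while having their difference inside it, so both $\int f^\pm \varphi_\delta$ are forced to be finite, completing the proof.
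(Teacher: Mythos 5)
Your treatment of the case $f\ge 0$ coincides exactly with the paper's proof: the same auxiliary functions $\varphi_{\delta,k}$ solving $(-\Delta)^s\varphi_{\delta,k}=\min\{\delta^{-s},k\}$, the same monotonicity $\varphi_{\delta,k}\uparrow\varphi_\delta$, and the same two applications of monotone convergence giving $\int_\Omega u/\delta^s=\int_\Omega f\varphi_\delta$ in $[0,+\infty]$. Where the paper then simply asserts that ``the sign changing case follows directly,'' you have correctly spotted the actual difficulty: writing $u=u_1-u_2$ with $u_i\ge 0$ the Green potentials of $f^\pm$, the implication $(\Rightarrow)$ needs one to exclude the possibility that both $u_1/\delta^s$ and $u_2/\delta^s$ fail to be integrable while $|u_1-u_2|/\delta^s$ is.

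Your proposed resolution, however, does not close that gap. Kato's inequality (\Cref{lem:Kato}) gives $\int_\Omega|u|(-\Delta)^s\varphi\le\int_\Omega|f|\varphi$ for $0\le\varphi\in\Xs$; testing with $\varphi_{\delta,k}$ and letting $k\to\infty$ yields at best $\int_\Omega|u|/\delta^s\le\int_\Omega|f|\varphi_\delta$, which is an upper bound on the quantity you are assuming finite — it re-proves the easy $(\Leftarrow)$ direction but says nothing about $\int_\Omega|f|\varphi_\delta$ being finite when $\int_\Omega|u|/\delta^s$ is. Likewise the ``disjoint supports'' heuristic gives no obstruction to cancellation: $u_1$ and $u_2$ are each strictly positive throughout $\Omega$ and each behaves like a multiple of $\delta^s$ near $\partial\Omega$ regardless of where $f^\pm$ are supported, so their boundary divergences could in principle match to leading order. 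Closing $(\Rightarrow)$ for sign-changing $f$ requires a genuine lower bound for $\int_\Omega|f|\varphi_\delta$ in terms of $\int_\Omega|u|/\delta^s$, which neither Kato nor the disjoint-support structure supplies as written; note that the paper itself does not carry out this step and leaves it as an assertion.
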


\noindent {\bf Remark.} The difficulty with this function is that, since $1/\delta^s \notin L^\infty$, we know $\varphi_\delta \notin \Xs$.

\begin{proof}  Let us consider the auxiliary functions
	\begin{equation}
		\begin{dcases}
		(-\Delta)^s \varphi_{\delta,k} = \min\left \{ \frac{1}{\delta^s}, k \right \} & \Omega \\
		\varphi_{\delta,k} = 0 & \Omega^c.
		\end{dcases}
	\end{equation}
	Then
	\begin{equation}
	\int_ {\Omega} u  \min\left \{ \frac{1}{\delta^s}, k \right \} = \int_{ \Omega } u (-\Delta)^s \varphi_{ \delta, k } = \int_{ \Omega } f \varphi_{ \delta, k }.
	\end{equation}
	We will prove the case $f \ge 0$, and the sign changing case follows directly. Since $f \ge 0$ then $u \ge 0$. It is clear that $\varphi_{\delta,k}$ is a nondecreasing sequence, the limit of which is $\varphi_ \delta$. Since $f, u \ge 0$, by the Monotone Convergence Theorem
	\begin{equation}
	\int_ {\Omega} \frac{|u|}{\delta^s} = \int_ {\Omega} \frac{u}{\delta^s} = \lim_{k \to \infty} \int_ {\Omega} u  \min\left \{ \frac{1}{\delta^s}, k \right \}  = \lim_{k \to \infty } \int_{ \Omega } f \varphi_{ \delta, k }  = \int_{ \Omega } f \varphi_{ \delta } = \int_{ \Omega } |f| \varphi_{ \delta }.
	\end{equation}
	One integral is finite if and only the other integral is finite.
\end{proof}
We can characterize the behaviour of $\varphi_{ \delta }$ near the boundary:
\begin{lemma} \label{lem:phi delta estimates}
	There exist constants, $c, C > 0$ such that
	\begin{equation}
	c \delta^s |\log \delta| \le \varphi _ \delta \le C \delta^s (1 + |\log \delta|).
	\end{equation}
\end{lemma}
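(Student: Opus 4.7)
Since $1/\delta^s$ satisfies $\int_\Omega \delta^{-s}\cdot\delta^s\,dy = |\Omega| < \infty$, we have $\delta^{-s}\in L^1(\Omega,\delta^s)$, so by \Cref{thm:chen+veron} the function $\varphi_\delta$ is the unique very weak solution of its defining problem and, by the representation \eqref{eq:Green operator},
\begin{equation*}
\varphi_\delta(x) = \int_\Omega \mathbb G_s(x,y)\,\delta(y)^{-s}\,dy.
\end{equation*}
The strategy is to insert the two-sided estimate \eqref{eq:Green function estimates} and split $\Omega$ according to the relative sizes of $|x-y|$, $\delta(x)$ and $\delta(y)$. Away from the boundary, say $\delta(x)\ge \rho_0$, both sides of the desired inequality are comparable to a positive constant, so the work is in the regime $\delta(x)\le \rho_0$, where I write $d\defeq \delta(x)$.

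\textbf{Upper bound.} I split the integral into the inner region $A=\{y\in\Omega:|x-y|<d/2\}$, where $\delta(y)\asymp d$ and \eqref{eq:Green function estimates} gives $\mathbb G_s(x,y)\lesssim |x-y|^{2s-n}$, and the outer region $B=\Omega\setminus A$, where $|x-y|\gtrsim d$. The $A$-contribution is $\lesssim d^{-s}\int_0^{d/2} r^{2s-n}\,r^{n-1}\,dr\asymp d^s$. On $B$ I distinguish the subcases $\delta(y)\le |x-y|$ and $\delta(y)>|x-y|$; in both cases \eqref{eq:Green function estimates} yields
\begin{equation*}
\mathbb G_s(x,y)\,\delta(y)^{-s} \;\lesssim\; d^s\,|x-y|^{-n},
\end{equation*}
(using $\delta(y)^{-s}\le |x-y|^{-s}$ in the second subcase to absorb the extra factor of $|x-y|^s$). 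Integrating over $d/2\le|x-y|\le \operatorname{diam}(\Omega)$ gives $\lesssim d^s\log(\operatorname{diam}(\Omega)/d)\asymp d^s|\log d|$, and summing with the $A$-contribution yields the desired upper bound $C\,d^s(1+|\log d|)$.

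\textbf{Lower bound.} For the lower bound I must produce enough mass in the intermediate region where $\mathbb G_s$ is not suppressed by either boundary factor. Using that $\partial\Omega$ is $C^2$, I fix a boundary point $x_0$ realizing $\delta(x)$ and move inward along the inner normal. For each dyadic scale $r_k=2^kd$ with $d\le r_k\le \rho_1$ (a small constant depending on $\Omega$) this produces a ball $B_k\subset\Omega$ of radius comparable to $r_k$ such that $|x-y|\asymp r_k$ and $\delta(y)\asymp r_k$ for all $y\in B_k$. On $B_k$, \eqref{eq:Green function estimates} yields $\mathbb G_s(x,y)\gtrsim d^s\,r_k^{s-n}$ and $\delta(y)^{-s}\asymp r_k^{-s}$, so the contribution of $B_k$ is $\gtrsim d^s\,r_k^{s-n}\,r_k^{-s}\,r_k^{n}=d^s$. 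Summing over the $\asymp|\log d|$ dyadic scales between $d$ and $\rho_1$ yields $\varphi_\delta(x)\gtrsim d^s|\log d|$, as claimed.

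\textbf{Main obstacle.} The delicate step is the geometric construction supplying the balls $B_k$ at every scale: it relies on the interior $C^2$ regularity to guarantee an interior ball of proportional radius and on the scale-invariant comparability $|x-y|\asymp \delta(y)\asymp r_k$, which is exactly what prevents either factor in \eqref{eq:Green function estimates} from collapsing. The case-by-case bookkeeping in the upper bound is routine but must be arranged so that the $\delta(y)^{-s}$ weight is always absorbed by a matching power of $|x-y|$.
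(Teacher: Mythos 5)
Your upper bound argument is essentially the paper's: decompose the representation $\varphi_\delta(x)=\int_\Omega \mathbb G_s(x,y)\delta(y)^{-s}\,dy$ into the inner ball $\{|x-y|<\delta(x)/2\}$ (which contributes $O(\delta(x)^s)$) and its complement (which contributes $O(\delta(x)^s|\log\delta(x)|)$), absorbing the $\delta(y)^{-s}$ weight by the $\delta(y)$-boundary factor in \eqref{eq:Green function estimates}; the bookkeeping is the same, just phrased with two subcases instead of a single chain of inequalities. For the lower bound, however, your argument genuinely differs from and improves on the paper's. The paper retains only the contribution of the inner ball, where all factors in \eqref{eq:Green function estimates} are $\asymp 1$ and $\delta(y)\asymp\delta(x)$; this yields only $\varphi_\delta(x)\gtrsim\delta(x)^s$, which for $\delta(x)$ small is strictly weaker than the stated $c\,\delta^s|\log\delta|$. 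Your dyadic sum along the inner normal -- a ball $B_k$ at each scale $r_k=2^k\delta(x)$ with $\delta(y)\asymp|x-y|\asymp r_k$, so that the two boundary factors in the Green function estimate do not both collapse, each $B_k$ contributing a term $\asymp\delta(x)^s$ -- is exactly what produces the $|\log\delta(x)|$ factor from the $\asymp|\log\delta(x)|$ admissible scales, and the $C^2$ regularity of $\partial\Omega$ supplies the interior-ball geometry you invoke. So on the lower bound your proposal is the sharper and more complete route, and it is the one actually needed to justify the subsequent Proposition on the equivalence $u/\delta^s\in L^1(\Omega)\iff f\delta^s(1+|\log\delta|)\in L^1(\Omega)$.
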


\noindent This result is technical but simple, we give  the proof in \Cref{sec:phi delta estimates}. For $s=1$ the result is due to Rakotoson \cite{Rakotoson2018personal+communication}.

Through this estimate, we provide an extension of \eqref{eq:Rakotoson necessary and suf condition u delta in L1} to the fractional case:
\begin{proposition}[Necessary and sufficient condition for $ u /{\delta^s} \in L^1 (\Omega)$] Let $f \delta^s \in L^1 (\Omega)$ and $u \in L^1 (\Omega)$ be the very weak solution of \eqref{eq:FDE Brezis no potential}. Then,
	\begin{equation} \label{eq:equivalence u deltas if V is 0}
	\frac u {\delta^s} \in L^1 (\Omega) \iff f \delta^s ( 1 + |\log \delta |) \in L^1 (\Omega).
	\end{equation}
	
\end{proposition}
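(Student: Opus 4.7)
The plan is to combine the characterization proved in the previous proposition with the two-sided estimate on $\varphi_\delta$ supplied by the lemma on $\varphi_\delta$. By the previous proposition (applied to $|f|$, whose solution is $|u|$ in the nonnegative case, and in general via \Cref{lem:Kato}), we have the equivalence
\begin{equation*}
\frac{u}{\delta^s}\in L^1(\Omega) \iff |f|\,\varphi_\delta \in L^1(\Omega).
\end{equation*}
Thus the claim reduces to showing
\begin{equation*}
|f|\,\varphi_\delta \in L^1(\Omega) \iff |f|\,\delta^s(1+|\log\delta|)\in L^1(\Omega),
\end{equation*}
under the standing hypothesis $f\delta^s\in L^1(\Omega)$.

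For the $(\Leftarrow)$ direction I would simply invoke the upper bound $\varphi_\delta \le C\delta^s(1+|\log\delta|)$ from \Cref{lem:phi delta estimates} to obtain $|f|\varphi_\delta \le C |f|\delta^s(1+|\log\delta|)$, which yields integrability at once.

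For the $(\Rightarrow)$ direction, since $|f|\delta^s\in L^1(\Omega)$ by hypothesis, it suffices to prove that $|f|\delta^s|\log\delta|\in L^1(\Omega)$. Here the lower bound $\varphi_\delta \ge c\delta^s|\log\delta|$ is useful only near $\partial\Omega$, where $|\log\delta|$ is large and positive; this forces me to split $\Omega$ into two regions. Choose $\rho_0\in(0,1/e)$ small enough, and set $A=\{\delta<\rho_0\}$ and $B=\Omega\setminus A$. On $A$, the lower bound gives $|f|\delta^s|\log\delta|\le c^{-1}|f|\varphi_\delta$, which is integrable by assumption. On $B$, the quantity $|\log\delta|$ is bounded by a constant $M=M(\rho_0,\Omega)$, so $|f|\delta^s|\log\delta|\le M|f|\delta^s$, again integrable by hypothesis. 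Summing the two estimates yields $|f|\delta^s(1+|\log\delta|)\in L^1(\Omega)$.

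The only delicate point is precisely this splitting: the lower estimate on $\varphi_\delta$ degenerates in the interior because $|\log\delta|$ can vanish, but this is harmless since integrability in the interior is automatic from $f\delta^s\in L^1(\Omega)$. Everything else is a direct combination of \Cref{prop:nec and suf condition for u over deltas in L1} with \Cref{lem:phi delta estimates}.
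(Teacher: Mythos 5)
Your proposal is correct and follows essentially the same route as the paper: combine the identity $\int_\Omega u/\delta^s = \int_\Omega f\varphi_\delta$ (the content of \Cref{prop:nec and suf condition for u over deltas in L1}) with the two-sided bound on $\varphi_\delta$ from \Cref{lem:phi delta estimates}. Your explicit splitting of $\Omega$ into a boundary strip and its complement is a harmless refinement — the paper applies the lower bound $c\,\delta^s|\log\delta|\le\varphi_\delta$ globally (it merely degenerates where $|\log\delta|$ is small), but spelling out that the interior part follows from the standing hypothesis $f\delta^s\in L^1(\Omega)$ is a fair clarification of the same argument.
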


\begin{proof}
	We will only give the proof for $f \ge 0$. Then $u \ge 0$.
	\begin{equation}
	\int_ \Omega \frac{u}{\delta^s} = \int_ \Omega f \varphi_{ \delta } .
	\end{equation}
	Thus
	\begin{equation}
	c_1 \int_\Omega f \delta^s |\log \delta| \le \int_ \Omega \frac u {\delta^s } \le c_2 \int_ \Omega f \delta ^s (1 + | \log \delta |) .
	\end{equation}
	The $\impliedby$ part is then proved.
	
	On the other hand, if $ u /\delta^s \in L^1 (\Omega)$, then $f \delta^s |\log \delta| \in L^1 (\Omega)$. For the first eigenfunction of $(-\Delta)^s$ it is well known that $c_1 \delta^s \le \varphi_1 \le c_2 \delta^s$.  Hence,
	\begin{equation}
	c_1 \int_\Omega f \delta ^s \le \int_ \Omega f \varphi_1 = \int _ \Omega u (-\Delta)^s \varphi_1 = \lambda_1 \int_ \Omega  u \varphi_1 \le \left \| \frac u {\delta^s} \right \|_{L^1 } \| \varphi_1 \delta^s \|_{L^\infty}
	\end{equation}
	Adding both computations, the $\implies$ part of the theorem is proved.
\end{proof}


\subsection{A note on local very weak solutions}

	In this theory, it is natural to define local solutions, if we are not concerned with the boundary information:
	\begin{definition}
		We say that $u$ is a very weak \emph{local} solution if
		\begin{equation} \label{eq:FDE local no potential} \tag{P$^0_{\textrm{loc}}$}
		\begin{dcases}
		u\in L^1 (\Omega),  \\
		u = 0 \textrm{ a.e. } \mathbb R^n \setminus \Omega \textrm{ and } \\
		\int _\Omega u (-\Delta)^s \varphi\,dx  = \int_{\Omega} f \varphi\,dx , \qquad \forall \varphi \in \Xs \cap C_c (\Omega).
		\end{dcases}
		\end{equation}	
	\end{definition}
	Notice that the difference with \eqref{eq:FDE Brezis no potential} lies in space where the test functions are taken. It is clear that, even with the extra requirement $u \in H^s (\Omega)$, there is no uniqueness of solutions. The reason why \eqref{eq:FDE Brezis} has uniqueness of solutions is the fact, in the very weak formulation, that the test function ``sees'' the boundary information. This is due to the integration by parts formula.
	
	Due to the integration  by parts formula and density, any solution \eqref{eq:FDE local no potential} such that $u \in H^s _0 (\Omega)$ is a solution of \eqref{eq:FDE Brezis no potential}. This raises the question: how much extra information does one need to show uniqueness of \eqref{eq:FDE local no potential}. In this section, we will show that
	\begin{equation} \label{eq:u deltas is in L1}
	\frac u { \delta ^s } \in L^1 (\Omega)
	\end{equation}
	 is sufficient information (i.e., there is, at most, one solution of \eqref{eq:FDE local no potential} such that \eqref{eq:u deltas is in L1} holds). \Cref{prop:nec and suf condition for u over deltas in L1} shows that this \eqref{eq:u deltas is in L1} is only possible if $f \varphi_ \delta \in L^1 (\Omega)$, and in this case it is always true. This produces and equivalent formulation of \eqref{eq:FDE Brezis} when $f \varphi_ \delta \in L^1 (\Omega)$.


 \subsubsection{A lemma of approximation of test functions}

	Since the only difference between \eqref{eq:FDE Brezis no potential} and \eqref{eq:FDE local no potential} is the space of test functions, let us study further these spaces. It is clear that one way to pass from \eqref{eq:FDE local no potential} to \eqref{eq:FDE Brezis no potential} will be to select, for each $\varphi \in \Xs$ a sequence $\varphi_k \in \Xs \cap \mathcal C_c^\infty(\Omega)$ such that $\varphi_k \to \varphi$. This convergence must be good enough to preserve the equation. In this direction we introduce the following cut-offs.

Let $\eta$ be a $C^2 (\mathbb R)$ function such that $0 \le \eta \le 1$ and
\begin{equation}
\eta (t) = \begin{cases}
0 & t \le 0 , \\
1 & t \ge 2.
\end{cases}
\end{equation}
We define the functions
\begin{equation}
\eta_\ee (x) = \eta \left(  \frac{{\varphi_1 (x)} - \ee^s}{\ee^s}   \right).
\end{equation}
where $\varphi_1$ is the first eigenfunction of $(-\Delta)^s$. Notice that $\varphi_1 \asymp \delta^s$. We prove  the following approximation result:

\begin{lemma}  \label{lem:approximation of test functions}
	For $\varphi \in \Xs$ we have that $\eta_\ee\varphi\in \Xs\cap C_c(\Omega)$ and
	\begin{align} \label{eq:approximation of test functions}
	\delta^s (-\Delta)^s (\varphi \eta_\ee ) & \rightharpoonup \delta^s (-\Delta)^s \varphi \\
	\frac{\varphi \eta_\ee}{\delta^s} &\rightharpoonup \frac{\varphi}{\delta^s} \label{eq:approximation of test function over deltas}
	\end{align}
	in $L^\infty$-weak-$\star$ as $\ee \to 0$.
\end{lemma}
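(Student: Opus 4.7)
The proof splits into three parts.

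\textbf{(1)} For $\eta_\varepsilon\varphi\in\Xs\cap C_c(\Omega)$: since $\varphi_1\asymp\delta^s$, the set $\{\varphi_1\le\varepsilon^s\}$ contains a collar $\{\delta\le c_0\varepsilon\}$ of $\partial\Omega$, so $\eta_\varepsilon$ vanishes there and $\eta_\varepsilon\varphi$ has compact support in $\Omega$. Interior elliptic regularity makes $\eta_\varepsilon$ of class $C^2$ on its support and globally of class $C^s(\mathbb{R}^n)$, so $\eta_\varepsilon\varphi\in C^s(\mathbb{R}^n)$. Boundedness of $(-\Delta)^s(\eta_\varepsilon\varphi)$ follows from the commutator decomposition
\[
(-\Delta)^s(\eta_\varepsilon\varphi)(x) = \eta_\varepsilon(x)(-\Delta)^s\varphi(x) + c_{n,s}\int_{\mathbb{R}^n}\frac{(\eta_\varepsilon(x)-\eta_\varepsilon(y))\varphi(y)}{|x-y|^{n+2s}}\,dy,
\]
where the first summand lies in $L^\infty$ and the remaining integral is bounded for each fixed $\varepsilon$ by a Taylor expansion of $\eta_\varepsilon$ around $x$ (for $x$ interior to its support) and by brute force with $|x-y|\gtrsim\varepsilon$ otherwise. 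The convergence \eqref{eq:approximation of test function over deltas} is now immediate: by \Cref{prop:Ros-Oton}(ii), $\varphi/\delta^s\in L^\infty(\Omega)$, so $\varphi\eta_\varepsilon/\delta^s=(\varphi/\delta^s)\eta_\varepsilon$ is dominated by $\|\varphi/\delta^s\|_\infty$ and converges pointwise to $\varphi/\delta^s$ in $\Omega$, giving the $L^\infty$-weak-$\star$ limit by dominated convergence.

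\textbf{(2)} For \eqref{eq:approximation of test functions}, set $\psi_\varepsilon:=\varphi(1-\eta_\varepsilon)$ so that $\delta^s\bigl[(-\Delta)^s(\varphi\eta_\varepsilon)-(-\Delta)^s\varphi\bigr]=-\delta^s(-\Delta)^s\psi_\varepsilon$. Observe that $\psi_\varepsilon$ is supported in $\{\varphi_1\le 3\varepsilon^s\}\subset\{\delta\le C\varepsilon\}$ and $|\psi_\varepsilon|\le|\varphi|\le C\delta^s$, so $\|\psi_\varepsilon\|_\infty\le C\varepsilon^s\to 0$. For the pointwise limit at fixed $x\in\Omega$, once $\delta(x)>2C\varepsilon$, $\psi_\varepsilon$ vanishes on $B(x,\delta(x)/2)$ and
\[
(-\Delta)^s\psi_\varepsilon(x) = -c_{n,s}\int_{|x-y|\ge\delta(x)/2}\frac{\psi_\varepsilon(y)}{|x-y|^{n+2s}}\,dy\ \longrightarrow\ 0
\]
by dominated convergence with the $L^1$-majorant $C\delta(y)^s|x-y|^{-n-2s}\mathbf{1}_{\{|x-y|\ge\delta(x)/2\}}$. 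Hence $\delta(x)^s(-\Delta)^s\psi_\varepsilon(x)\to 0$ for a.e.\ $x\in\Omega$.

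\textbf{(3)} The decisive obstacle is the uniform bound $\sup_\varepsilon\|\delta^s(-\Delta)^s\psi_\varepsilon\|_{L^\infty(\Omega)}<\infty$; together with the pointwise convergence, it yields, for each $g\in L^1(\Omega)$, $\int g\,\delta^s(-\Delta)^s\psi_\varepsilon\,dx\to 0$ by dominated convergence, which is exactly the required $L^\infty$-weak-$\star$ limit. I would split $\Omega$ into $\{\delta>2C\varepsilon\}$ and $\{\delta\le 2C\varepsilon\}$. On the interior piece the estimate above refines to $|(-\Delta)^s\psi_\varepsilon(x)|\le C\varepsilon^s\delta(x)^{-2s}$, whence $\delta(x)^s|(-\Delta)^s\psi_\varepsilon(x)|\le C(\varepsilon/\delta(x))^s\le C'$. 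On the boundary strip the hard work appears: expanding with the Leibniz-type identity
\[
(-\Delta)^s\psi_\varepsilon = (1-\eta_\varepsilon)(-\Delta)^s\varphi - \varphi\,(-\Delta)^s\eta_\varepsilon + c_{n,s}\int\frac{(\varphi(x)-\varphi(y))(\eta_\varepsilon(x)-\eta_\varepsilon(y))}{|x-y|^{n+2s}}\,dy,
\]
one controls each summand times $\delta^s$ using $|\varphi|\le C\delta^s$, the $C^s$ regularity of $\varphi$, the eigenfunction identity $(-\Delta)^s\varphi_1=\lambda_1\varphi_1$ (plugged into the chain-rule-type formula for $(-\Delta)^s(\zeta_\varepsilon\circ\varphi_1)$, whose remainder involves $(\varphi_1(x)-\varphi_1(y))^2/|x-y|^{n+2s}$), and the scalings $|\zeta_\varepsilon^{(k)}|\le C\varepsilon^{-ks}$. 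The weight $\delta^s$ is precisely what absorbs the $\varepsilon^{-s}$ factors produced in the transition layer, yielding a bound independent of $\varepsilon$.
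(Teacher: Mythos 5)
Your proof is correct and takes essentially the same route as the paper: the core tools are Eilertsen's product rule (your ``Leibniz-type identity''), the chain-rule formula for $(-\Delta)^s(\zeta_\varepsilon\circ\varphi_1)$ combined with the eigenfunction identity and the scalings $|\zeta_\varepsilon^{(k)}|\lesssim\varepsilon^{-ks}$, the weight $\delta^s$ absorbing the $\varepsilon^{-s}$ losses in the transition layer, and the principle that pointwise convergence plus a uniform $L^\infty$ bound yields $L^\infty$-weak-$\star$ convergence. Your reorganization around $\psi_\varepsilon=\varphi(1-\eta_\varepsilon)$, with a direct interior exclusion-ball estimate and the near-boundary strip handled by the product-rule decomposition, is a cosmetic variant of the paper's argument, which packages the same bounds as two auxiliary lemmas giving $|\delta^{2s}(-\Delta)^s\eta_\varepsilon|\le C$ and $\delta^s$ times the cross term $\le C$ uniformly on $\overline\Omega$.
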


To prove Lemma \ref{lem:approximation of test functions} we can use the following decomposition:
\begin{theorem}[Eilertsen formula (see \cite{Eilertsen2000})] Let $u,v \in \mathcal C_0^\infty (\mathbb R^n)$ and $ 0 < s < 1$. Then
	\begin{equation} \label{eq:Eilertsen formula}
	(-\Delta)^s (uv) (x)=  u (x)(-\Delta)^s v (x)+  v(x) (-\Delta)^s u (x)  - A_s \int_{\mathbb R^n} \frac{ (u(x)- u(y) ) (v (x) - v (y))}{|x-y|^{n+2s}} dy,
	\end{equation}
	where $A_s \asymp s(1-s)$.
\end{theorem}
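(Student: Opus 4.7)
The plan is to reduce the identity to a single algebraic decomposition of the finite difference $u(x)v(x)-u(y)v(y)$, and then integrate against the singular kernel $|x-y|^{-(n+2s)}$ term by term, using only the definition \eqref{frac.lap} of $(-\Delta)^s$.

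First, I would verify the purely algebraic identity
\begin{equation}
u(x)v(x)-u(y)v(y)=u(x)\bigl(v(x)-v(y)\bigr)+v(x)\bigl(u(x)-u(y)\bigr)-\bigl(u(x)-u(y)\bigr)\bigl(v(x)-v(y)\bigr),
\end{equation}
which is just a reshuffling of terms and holds pointwise. Equivalently, $(uv)(x)-(uv)(y)$ is written as the sum of two ``Leibniz'' pieces in which $u(x)$ and $v(x)$ can be pulled outside the $y$-integral, plus a symmetric quadratic correction that does not involve any factor external to the $y$-integration.

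Next, I would divide by $|x-y|^{n+2s}$, multiply by $c_{n,s}$, and integrate in $y$ over $\mathbb R^n$ in the principal-value sense. Since $u(x)$ and $v(x)$ are independent of $y$, the first two pieces yield
\begin{equation}
u(x)\cdot c_{n,s}\,\mathrm{P.V.}\!\int_{\mathbb R^n}\frac{v(x)-v(y)}{|x-y|^{n+2s}}dy\;+\;v(x)\cdot c_{n,s}\,\mathrm{P.V.}\!\int_{\mathbb R^n}\frac{u(x)-u(y)}{|x-y|^{n+2s}}dy,
\end{equation}
which is exactly $u(x)(-\Delta)^s v(x)+v(x)(-\Delta)^s u(x)$ by the defining formula \eqref{frac.lap}. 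The third, quadratic piece gives the cross integral appearing on the right-hand side, with the constant $A_s$ identified as precisely $c_{n,s}$; the well-known asymptotics $c_{n,s}\asymp s(1-s)$ (one factor of $s$ from the kernel behavior at infinity, one factor of $1-s$ from its behavior near the diagonal) then yield $A_s\asymp s(1-s)$.

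The technical point to check is convergence of the three integrals and the legitimacy of splitting them. For $u,v\in\mathcal C_c^\infty(\mathbb R^n)$ a second-order Taylor expansion gives $|u(x)-u(y)|\lesssim |x-y|$ and $|v(x)-v(y)|\lesssim |x-y|$, so the cross integrand is $O(|x-y|^{2-n-2s})$ near $y=x$ and decays like $|x-y|^{-(n+2s)}$ at infinity; it is therefore absolutely integrable and no P.V. is needed for that term. The first two integrals are the standard P.V. fractional Laplacians of $u$ and $v$, well-defined by the $C_c^\infty$ assumption. A brief justification that the P.V. of the sum equals the sum of P.V.s — done, say, by integrating first over $\{|x-y|>\varepsilon\}$ and then passing to $\varepsilon\to 0$, which is harmless because the cross term is absolutely integrable and the two Leibniz pieces are each P.V.-integrable — completes the argument. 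This last bookkeeping step is the main (and only) subtle part; everything else is algebraic rearrangement and the definition of $(-\Delta)^s$.
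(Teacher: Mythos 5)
Your proof is correct. The paper itself gives no proof of this statement --- it is quoted from Eilertsen's paper as a known fact --- so there is nothing to compare against; your argument is the standard one. The algebraic identity
\begin{equation}
u(x)v(x)-u(y)v(y)=u(x)\bigl(v(x)-v(y)\bigr)+v(x)\bigl(u(x)-u(y)\bigr)-\bigl(u(x)-u(y)\bigr)\bigl(v(x)-v(y)\bigr)
\end{equation}
checks out, the identification $A_s=c_{n,s}\asymp s(1-s)$ is right, and you correctly isolate the one genuinely analytic point: the cross term is absolutely convergent near the diagonal (order $|x-y|^{2-n-2s}$ with $2-2s>0$) and at infinity, so the truncate-and-pass-to-the-limit argument legitimizes splitting the principal value into the three pieces.
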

The difficult term will be the first one.

\begin{lemma} There exists a constant $C$ independent of $\ee$ such that
	\begin{equation}
	| \delta^{2s} (x) ( -\Delta )^s \eta_\ee (x) | \le C.
	\end{equation}
\end{lemma}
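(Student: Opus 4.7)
The plan is to split the integral defining
\[
(-\Delta)^s\eta_\ee(x)=c_{n,s}\,\mathrm{P.V.}\int_{\mathbb R^n}\frac{\eta_\ee(x)-\eta_\ee(y)}{|x-y|^{n+2s}}\,dy
\]
into a ``far zone'' $\{|y-x|\ge \delta(x)/2\}$ and a ``near zone'' $\{|y-x|<\delta(x)/2\}$, exploiting that $\eta_\ee$ is constant off a thin transition shell. Since $\varphi_1\asymp\delta^s$, there exist $0<c_1<c_2$ such that $\eta_\ee\equiv 0$ on $\{\delta<c_1\ee\}\cup\Omega^c$ and $\eta_\ee\equiv 1$ on $\{\delta>c_2\ee\}$; hence $\nabla\eta_\ee$ and $D^2\eta_\ee$ are supported in the shell $T_\ee:=\{c_1\ee\le \delta\le c_2\ee\}$ of width $\sim\ee$.

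The far zone is handled by the $L^\infty$ bound $\|\eta_\ee\|_\infty\le 1$:
\[
\left|\int_{|y-x|\ge \delta(x)/2}\frac{\eta_\ee(x)-\eta_\ee(y)}{|x-y|^{n+2s}}\,dy\right|\le 2\int_{|z|\ge \delta(x)/2}|z|^{-n-2s}\,dz\le C\,\delta(x)^{-2s},
\]
which already contributes $\le C$ to $\delta(x)^{2s}|(-\Delta)^s\eta_\ee(x)|$. For the near zone, $B(x,\delta(x)/2)\subset\Omega$, and if $\delta(x)<c_1\ee/2$ (resp. $\delta(x)>2c_2\ee$) this ball lies where $\eta_\ee$ is constantly $0$ (resp. $1$) and the contribution vanishes. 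The only delicate case is the transition regime $\delta(x)\asymp \ee$. There, the odd term $\nabla\eta_\ee(x)\cdot(x-y)$ has zero PV-integral on balls, and a second-order Taylor bound gives
\[
|I_\mathrm{near}(x)|\le \|D^2\eta_\ee\|_{L^\infty(B(x,\delta(x)/2))}\int_{|y-x|<\delta(x)/2}|x-y|^{2-n-2s}\,dy\le C\,\|D^2\eta_\ee\|_\infty\,\delta(x)^{2-2s}.
\]

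It then remains to show $\|D^2\eta_\ee\|_{L^\infty(T_\ee)}\le C\ee^{-2}$. The chain rule gives
\[
D^2\eta_\ee=\frac{\eta''}{\ee^{2s}}\,\nabla\varphi_1\otimes\nabla\varphi_1+\frac{\eta'}{\ee^s}\,D^2\varphi_1,
\]
so it suffices to establish the interior pointwise bounds $|\nabla\varphi_1(z)|\le C\delta(z)^{s-1}$ and $|D^2\varphi_1(z)|\le C\delta(z)^{s-2}$ on $T_\ee$. On the support of $\eta',\eta''$ we have $\delta(z)\asymp \ee$, hence $|D^2\eta_\ee(z)|\le C(\ee^{2s-2}/\ee^{2s}+\ee^{s-2}/\ee^s)\le C\ee^{-2}$. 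Substituting back, $\delta(x)^{2s}|I_\mathrm{near}(x)|\le C\ee^{2s}\cdot\ee^{-2}\cdot\ee^{2-2s}=C$, and together with the far-zone bound this proves the lemma.

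The main technical obstacle is the pointwise interior derivative bound on $\varphi_1$, which is not explicitly recorded earlier. I would obtain it by rescaling the eigenvalue equation $(-\Delta)^s\varphi_1=\lambda_1\varphi_1$ to the unit ball at $z\in T_\ee$ (the ball $B(z,\delta(z)/4)\subset\Omega$ being at distance $\sim\ee$ from $\partial\Omega$) and invoking classical interior Schauder-type estimates for the fractional Laplacian, combined with the $L^\infty$ bound $\varphi_1\le C\delta^s$. A cleaner alternative that bypasses eigenfunction regularity altogether is to build the cut-off using $\delta^s$ instead of $\varphi_1$: since $\partial\Omega$ is $C^2$, the distance function is $C^2$ near the boundary, and the derivatives $|\nabla\delta^s|\le C\delta^{s-1}$ and $|D^2\delta^s|\le C\delta^{s-2}$ can be computed directly, yielding the same bound.
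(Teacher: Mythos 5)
Your argument is correct, but it takes a genuinely different route from the paper's. The paper does not split at radius $\delta(x)/2$ or Taylor-expand $\eta_\ee$ in $\mathbb R^n$; instead it applies a one-dimensional composition formula (citing \cite{chen+veron2014}) to $\eta_\ee=\gamma_\ee\circ\varphi_1$, which expresses $(-\Delta)^s\eta_\ee(x)$ in terms of $\gamma_\ee'(\varphi_1(x))(-\Delta)^s\varphi_1(x)$ and a quadratic-increment integral $\int_\Omega|\varphi_1(x)-\varphi_1(y)|^2|x-y|^{-n-2s}dy$, deducing a crude global bound $|(-\Delta)^s\eta_\ee|\lesssim\ee^{-2s}$. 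Then it observes that for $\delta(x)\ge 3\ee$ the function $\eta_\ee$ is locally constant near $x$, so the fractional Laplacian is just the far tail $\lesssim d^{-2s}\lesssim\delta(x)^{-2s}$, and the dichotomy $\delta(x)\lessgtr 3\ee$ finishes the proof. Your approach replaces the Chen--V\'eron formula with a direct near/far splitting and a second-order Euclidean Taylor expansion of $\eta_\ee$ in the near zone. The trade-off: the paper's argument needs only $L^\infty$, Lipschitz-in-the-interior, and $(-\Delta)^s\varphi_1=\lambda_1\varphi_1$ information about $\varphi_1$, while yours needs the pointwise interior derivative bounds $|\nabla\varphi_1|\le C\delta^{s-1}$ and $|D^2\varphi_1|\le C\delta^{s-2}$, which are not recorded in the paper. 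You flag this honestly and propose two routes. Be aware that the naive rescaled interior Schauder estimate, using only $\|\varphi_1\|_{L^\infty(\mathbb R^n)}\le C$, yields the weaker $|D^2\varphi_1|\le C\delta^{-2}$, which feeds into your computation to give $\delta^{2s}|I_{\mathrm{near}}|\lesssim\ee^{-2s}$, not a constant; to get the exponent $s-2$ you must combine the rescaling with the one-sided growth bound $\varphi_1\le C\delta^s$ (so the \emph{local} sup of the rescaled function is $O(\delta^s)$) or invoke the finer boundary regularity $\varphi_1\in\delta^s\,\mathcal C^\alpha(\overline\Omega)$ together with interior smoothness. Your alternative --- building the cut-off from $\delta^s$ directly, for which $|\nabla(\delta^s)|\le s\delta^{s-1}$ and $|D^2(\delta^s)|\le C\delta^{s-2}$ are elementary on a $C^2$ domain --- is the cleanest way to close the gap; if you adopt it, note that the only place the paper's proof of the companion lemma uses $\varphi_1$ rather than $\delta^s$ is in rewriting $(-\Delta)^s\varphi_1=\lambda_1\varphi_1$, and there \eqref{eq:delta smooth} supplies the analogous boundedness for $\delta^s$.

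One small additional remark: in the near zone you should observe that $B(x,\delta(x)/2)\subset\Omega$ and that $\delta(y)\asymp\delta(x)\asymp\ee$ throughout this ball, so $D^2\eta_\ee$ is either zero or subject to the shell bound on all of $B(x,\delta(x)/2)$; this is implicit in your write-up but worth stating, since it is what licenses the uniform Taylor remainder there.
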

\begin{proof}
	In order to use \cite[Lemma 2.3]{chen+veron2014}
	we write $\eta_\ee (x) = \gamma_\ee (\varphi_1(x))$, where
	\begin{equation}
	\gamma_\ee (t) = \eta \left(  \frac{t - \ee^s}{\ee^s}   \right).
	\end{equation}
	Due to \eqref{eq:delta smooth}, we have that $\eta_\ee \in \Xs$ and, and for every $x \in \overline \Omega$, there exists $z_x \in \overline \Omega$
	\begin{align}
	(-\Delta)^s \eta_\ee (x) & =  (-\Delta)^s (\gamma_\ee \circ {\varphi_1}) (x) \\
	&= \gamma_\ee' ({\varphi_1}(x)) (-\Delta)^s {\varphi_1} (x) + \frac{\gamma''({\varphi_1}(z_x))}{2s} \int_{ \Omega } \frac{|{\varphi_1}(x)-{\varphi_1}(y)|^2}{|x-y|^{n+2s}}dy\\
	&=\frac{1}{\ee^s} \eta'\left(  \frac{{\varphi_1}(x) - \ee^s}{\ee^s}  \right) { \lambda_1 \varphi_1} (x) \\
	&\quad + \frac{\eta'' \left(  \frac{{\varphi_1}(z_x) - \ee^s}{\ee^s}  \right)}{2 s\ee^{2s}} \int_{ \Omega } \frac{|{\varphi_1}(x)-{\varphi_1}(y)|^2}{|x-y|^{n+2s}}dy.
	\end{align}
	From this, applying {the regularity of $\varphi_1$}
	\begin{equation} \label{eq:global bound of eta ee}
	| (-\Delta)^s \eta_\ee (x) | \le \ee ^{-2s}, \qquad \forall x \in \overline \Omega.
	\end{equation}
	
	Consider, in particular, that $\delta(x) \ge 3 \ee$. Let $ d = d ( x, \{   \ee < \delta < 2 \ee \} ) > 0$, so $ \eta_\ee (y) = 1 = \eta_\ee (x)$ if $|x-y| \le d$. We have that
	\begin{align}
	| (-\Delta)^s \eta_\ee (x) | &= \left |   \int _{\mathbb R^n } \frac{\eta_\ee (x) - \eta_\ee (y)}{|x-y|^{ n + 2s } } dy \right | \\
	&=\left |   \int _{|x-y| > d } \frac{\eta_\ee (x) - \eta_\ee (y)}{|x-y|^{ n + 2s } } dy \right | \\
	&\le    \int _{|x-y| > d } \frac{2}{|x-y|^{ n + 2s } } dy  \\
	&= C \int_{d}^{+\infty} \frac{1}{r^{n+2s} }r^{n-1} dr \\
	&= \frac{1}{d^{2s}}.
	\end{align}
	Since $d \ge \delta(x) - 2\ee \ge \delta(x)/3 $. We have that
	\begin{equation}
	| (-\Delta)^s \eta_\ee (x) | \le \frac{C}{\delta(x)^{2s}}, \qquad \forall x \textrm { such that } \delta (x) \ge 3 \ee.
	\end{equation}
	On the other hand,
	\begin{equation}
	| \delta^{2s}(x)(-\Delta)^s \eta_\ee (x) | \le 3^{2s} \ee ^{2s} C \ee^{-2s} = C \qquad \qquad \forall x \textrm { such that } \delta (x) \le 3 \ee.
	\end{equation}
	This proves the result.
\end{proof}

\begin{lemma} For all $\varphi \in \Xs$
	\begin{equation}
	\left| \delta^{s} (x) \int_{\mathbb R^n} \frac{ (\varphi(x)- \varphi(y) ) (\eta_\ee (x) - \eta_\ee (y))}{|x-y|^{n+2s}} dy\right| \le C,
	\end{equation}
	where $C$ does not depend on $\ee$.
\end{lemma}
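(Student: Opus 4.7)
The plan is to bound the integrand region by region, using three facts: (i) $\varphi\in \mathcal C^s(\mathbb R^n)$, so $|\varphi(x)-\varphi(y)|\le C|x-y|^s$; (ii) the pointwise bound $|\eta_\ee(x)-\eta_\ee(y)|\le\min(2,\,\|\eta'\|_\infty\ee^{-s}|\varphi_1(x)-\varphi_1(y)|)$ together with $\varphi_1\in \mathcal C^s(\mathbb R^n)$; and (iii) the interior gradient estimate $|\nabla\varphi_1(y)|\le C\delta(y)^{s-1}$, which one obtains by rescaling the eigenvalue equation $(-\Delta)^s\varphi_1=\lambda_1\varphi_1$ to the unit ball and applying standard interior regularity. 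Since $\varphi_1\asymp\delta^s$, I fix a constant $A$ depending only on $\Omega$ such that $\eta_\ee(y)=1$ whenever $\delta(y)\ge A\ee$, and correspondingly $\eta_\ee(y)<1$ forces $\delta(y)\le A\ee$.

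I then distinguish two main cases. If $\delta(x)\ge 2A\ee$, then $\eta_\ee(x)=1$ and the integrand is supported on $\{\delta(y)\le A\ee\}$, a set on which $|x-y|\ge\delta(x)/2$. Using $|\varphi(x)-\varphi(y)|\le C|x-y|^s$ and $|\eta_\ee(x)-\eta_\ee(y)|\le 1$, the remaining integral is controlled by $C\int_{|x-y|\ge\delta(x)/2}|x-y|^{-n-s}dy=C\delta(x)^{-s}$, and multiplication by $\delta^s(x)$ yields the desired uniform bound.

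If instead $\delta(x)<2A\ee$, I would split $\mathbb R^n$ into three pieces according to whether $|x-y|$ lies below $\delta(x)/2$, between $\delta(x)/2$ and $\ee$, or above $\ee$ (the middle interval possibly being empty). On the outer region, bounding $|\eta_\ee(x)-\eta_\ee(y)|\le 2$ and using Hölder for $\varphi$ gives a contribution of order $\ee^{-s}$, which after multiplication by $\delta^s(x)\le(2A\ee)^s$ is bounded. On the middle annulus, Hölder for both factors yields an integrand $\le C\ee^{-s}|x-y|^{-n}$ whose integral is $C\ee^{-s}\log(2\ee/\delta(x))$; multiplying by $\delta^s(x)$ gives $C(\delta(x)/\ee)^s\log(2\ee/\delta(x))$, which is uniformly bounded because $t^s|\log t|$ stays bounded on $(0,1]$.

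The main obstacle is the near-diagonal piece $\{|x-y|\le\delta(x)/2\}$: naive Hölder bounds on both $\varphi$ and $\eta_\ee$ produce an integrand of order $\ee^{-s}|x-y|^{-n}$, which is not integrable at the diagonal. Here the interior gradient estimate for $\varphi_1$ becomes essential: for $y\in B(x,\delta(x)/2)$ one has $\delta(y)\ge\delta(x)/2$, whence $|\varphi_1(x)-\varphi_1(y)|\le C\delta(x)^{s-1}|x-y|$, so $|\eta_\ee(x)-\eta_\ee(y)|\le C\ee^{-s}\delta(x)^{s-1}|x-y|$. Combined with $|\varphi(x)-\varphi(y)|\le C|x-y|^s$, the integrand is $\le C\ee^{-s}\delta(x)^{s-1}|x-y|^{1-n-s}$, integrable near the diagonal thanks to $s<1$, with total contribution of order $C\ee^{-s}\delta(x)^{s-1}\cdot\delta(x)^{1-s}=C\ee^{-s}$. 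A final multiplication by $\delta^s(x)\le(2A\ee)^s$ produces a bound independent of $\ee$.
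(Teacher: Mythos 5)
Your proof is correct, and it takes a genuinely different route from the paper's. The paper argues via the mean-value form of $\eta_\ee$ (writing $\eta_\ee=\gamma_\ee\circ\varphi_1$ and pulling out $\ee^{-s}$) to obtain a global $C\ee^{-s}$ bound, and separately via Cauchy--Schwarz against the $H^s$ seminorm when $\delta(x)\ge 3\ee$, exploiting that $\eta_\ee$ is constant on a ball of radius $d\asymp\delta(x)$ around $x$. You instead carve $\mathbb R^n$ into annuli at radii $\delta(x)/2$ and $\ee$, using the $C^s$ modulus of $\varphi$ throughout, the interior gradient estimate $|\nabla\varphi_1|\le C\delta^{s-1}$ near the diagonal, and switching from the bound $|\eta_\ee(x)-\eta_\ee(y)|\le C\ee^{-s}|x-y|^s$ to the trivial bound $\le 2$ at radius $\ee$. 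The split at $|x-y|=\ee$ is the decisive structural difference: it produces the controlled logarithm $\log(2\ee/\delta(x))$, absorbed by $(\delta(x)/\ee)^s$ through the boundedness of $t^s\log(1/t)$ on $(0,1]$. By contrast the paper's assertion that $\int|\varphi(x)-\varphi(y)|\,|\varphi_1(x)-\varphi_1(y)|\,|x-y|^{-n-2s}\,dy$ is uniformly bounded in $x$ actually acquires an extra factor $1+|\log\delta(x)|$ when estimated directly from H\"older and interior regularity, and that factor is not controlled by $(\delta(x)/\ee)^s$ when $\delta(x)\ll\ee$; your finer decomposition sidesteps this subtlety and is arguably the more robust route. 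The one ingredient you should make explicit is the interior gradient bound $|\nabla\varphi_1(y)|\le C\delta(y)^{s-1}$: it is correct and follows by rescaling interior Schauder estimates for $(-\Delta)^s$ to a ball $B(y,\delta(y)/2)$ as in \cite{Ros-Oton2014}, but it deserves a citation rather than the label ``standard interior regularity'', especially when $2s\le 1$. With that reference in place, your argument is complete.
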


\begin{proof}
	For any $x \in \overline \Omega$
	\begin{align}
	\left| \int_{\mathbb R^n} \frac{ (\varphi(x)- \varphi(y) ) (\eta_\ee (x) - \eta_\ee (y))}{|x-y|^{n+2s}} dy\right| & = \left| \int_{\mathbb R^n} \dfrac{ (\varphi(x)- \varphi(y) ) \eta'\left(  \frac{{\varphi_1}(z_y) - \ee^s}{\ee^s}   \right)\frac{ {\varphi_1} (x) - {\varphi_1}(y)}{\ee^s}}{|x-y|^{n+2s}} dy \right| \nonumber \\
	&\le C \ee^{-s} \left| \int_{\mathbb R^n} \dfrac{ (\varphi(x)- \varphi(y) ) ({ {\varphi_1} (x) - {\varphi_1}(y)})}{|x-y|^{n+2s}} dy \right|  \nonumber \\
	& \le C \ee^{-s}. \label{eq:product gradients general}
	\end{align}
	
	We compute, for $\delta(x) \ge 3 \ee$
	\begin{align}
	\left| \int_{\mathbb R^n} \frac{ (\varphi(x)- \varphi(y) ) (\eta_\ee (x) - \eta_\ee (y))}{|x-y|^{n+2s}} \right| & \le \left(   \int_{\mathbb R^n} \frac{ |\varphi(x)- \varphi(y) |^2 }{|x-y|^{n+2s}}   \right)^{\frac 1 2} \left(  \int_{\mathbb R^n} \frac{ |\eta_\ee (x) - \eta_\ee (y)|^2}{|x-y|^{n+2s}}  \right)^{\frac 1 2} \\
	& \le C \|\varphi\|_{H^s (\mathbb R^n)} \left(  \int_{|x-y|\ge d} \frac{1}{|x-y|^{n+2s}}\right)^{\frac 1 2} \\
	& \le \frac{C \|\varphi\|_{H^s(\mathbb R^n)}}{d^s}.
	\end{align}

	From this estimate and \eqref{eq:product gradients general} we conclude, as before, the result.
\end{proof}
\begin{proof}[Proof of \Cref{lem:approximation of test functions}]
	We write Eilertsen's formula in our case
	\begin{equation} \label{eq:Eilertsen formula particular}
	(-\Delta)^s (\varphi \eta_\ee ) (x)=    \eta_\ee (x) (-\Delta)^s \varphi (x) + \varphi(x) (-\Delta)^s \eta_\ee (x) - A_s \int_{\mathbb R^n} \frac{ (\varphi(x)- \varphi(y) ) (\eta_\ee (x) - \eta_\ee (y))}{|x-y|^{n+2s}} dy.
	\end{equation}
	We have proven that the second and third term are bound when multiplied by $\delta^{2s}(x)$ and $\delta^s(x)$, respectively. They converge pointwise to $0$. Hence, up to a subnet,
	\begin{align}
	\delta^{2s} (x) (-\Delta)^s \eta_{\ee} & \rightharpoonup  0, \\
	\delta^{s} (x) \int_{\mathbb R^n} \frac{ (\varphi(x)- \varphi(y) ) (\eta_\ee (x) - \eta_\ee (y))}{|x-y|^{n+2s}} dy & \rightharpoonup 0 \qquad \textrm{ in } L^\infty\textrm{-weak-}\star.
	\end{align}
	Since $\varphi \in \Xs$ we know that $\frac{ \varphi }{\delta^s} \in L^\infty$. Hence, up to a subnet,
	\begin{equation}
	\delta^s \varphi (-\Delta)^s \eta_\ee = \frac{\varphi}{\delta^s} \delta^{2s} (-\Delta)^s \eta_\ee \rightharpoonup 0 \textrm{ in } L^\infty\textrm{-weak-}\star.
	\end{equation}
	On the other hand $ \eta_{\ee}$ is bounded, and converges pointwise to $1$. Hence, up to a subnet,
	\begin{equation}
	\eta_{\ee} \rightharpoonup 1 \qquad \textrm{ in } L^\infty\textrm{-weak-}\star.
	\end{equation}
	Thus, up to a subnet,
	\begin{equation}
	\delta^s \eta_{\ee} \rightharpoonup \delta^s \qquad \textrm{ in } L^\infty\textrm{-weak-}\star.
	\end{equation}
	All the above are bounded net, such that every subnet have the same limit. All nets converge. This proves \eqref{eq:approximation of test functions}. \\
	
	\noindent On the other hand
	\begin{equation}
	\frac{\varphi \eta_{\ee}}{\delta^s} = \frac{\varphi }{\delta^s} \eta_{\ee} \rightharpoonup \frac{\varphi }{\delta^s} \qquad \textrm{ in } L^\infty\textrm{-weak-}\star,
	\end{equation}	
	proving \eqref{eq:approximation of test function over deltas}. This concludes the proof.
\end{proof}


\subsubsection{A local solution which is integrable with a suitable boundary weight is a v.w.s.}
	
	\begin{proposition} \label{prop:no potencial:local solution and weighted implies vws}
		Any solution of \eqref{eq:FDE local no potential} such that $\frac u {\delta^s} \in L^1 (\Omega)$ is a solution of \eqref{eq:FDE Brezis no potential}.
	\end{proposition}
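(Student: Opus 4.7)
The plan is to use the approximation \Cref{lem:approximation of test functions} to transfer a given test function $\varphi \in \Xs$ into the subspace $\Xs \cap C_c(\Omega)$ of test functions admitted by the local formulation, and then to pass to the limit in the resulting identity. The two integrability hypotheses we have at our disposal are the standing assumption $f \delta^s \in L^1(\Omega)$ and the extra boundary information $u/\delta^s \in L^1(\Omega)$, so the strategy is to pair each side of the equation into a product of an $L^1$ factor with an $L^\infty$ factor that converges in the weak-$\star$ topology.

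More precisely, fix $\varphi \in \Xs$ and set $\varphi_\ee = \eta_\ee \varphi$, with $\eta_\ee$ the cut-off constructed before \Cref{lem:approximation of test functions}. By that lemma $\varphi_\ee \in \Xs \cap C_c(\Omega)$, so the local formulation \eqref{eq:FDE local no potential} applied to $u$ gives
\begin{equation}
\int_\Omega u \, (-\Delta)^s \varphi_\ee \, dx = \int_\Omega f \, \varphi_\ee \, dx.
\end{equation}
I would then rewrite each side by factoring out the weight $\delta^s$:
\begin{equation}
\int_\Omega \frac{u}{\delta^s} \, \bigl[\delta^s (-\Delta)^s \varphi_\ee\bigr] \, dx = \int_\Omega f \delta^s \, \frac{\varphi_\ee}{\delta^s} \, dx.
\end{equation}

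Now the passage to the limit $\ee \to 0$ is exactly the $L^1$–$L^\infty$-weak-$\star$ duality. On the left, $u/\delta^s \in L^1(\Omega)$ by assumption, and \eqref{eq:approximation of test functions} tells us that $\delta^s (-\Delta)^s \varphi_\ee \rightharpoonup \delta^s (-\Delta)^s \varphi$ in $L^\infty$-weak-$\star$, so the left-hand side converges to $\int_\Omega u (-\Delta)^s \varphi \, dx$. On the right, $f\delta^s \in L^1(\Omega)$ by hypothesis, and \eqref{eq:approximation of test function over deltas} gives $\varphi_\ee/\delta^s \rightharpoonup \varphi/\delta^s$ in $L^\infty$-weak-$\star$, so the right-hand side converges to $\int_\Omega f \varphi \, dx$. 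Combining the two limits yields the identity
\begin{equation}
\int_\Omega u \, (-\Delta)^s \varphi \, dx = \int_\Omega f \varphi \, dx \qquad \forall \varphi \in \Xs,
\end{equation}
which is exactly the very weak formulation \eqref{eq:FDE Brezis no potential}.

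The only point requiring any care is that $\varphi/\delta^s$ really lies in $L^\infty(\Omega)$, which is part of the definition of $\Xs$ (and is why $\Xs$ was defined with the $\mathcal C^s$ boundary regularity); once this is noted, both dualities are legitimate and the proof is essentially immediate. I do not see a serious obstacle here: the entire content of the statement has been loaded into the weak-$\star$ convergence statements of \Cref{lem:approximation of test functions}, and the role of the extra hypothesis $u/\delta^s \in L^1(\Omega)$ is precisely to supply the $L^1$ factor that makes the left-hand side pass to the limit.
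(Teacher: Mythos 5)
Your proof is correct and follows essentially the same route as the paper's: apply the local formulation to the truncated test function $\eta_\ee\varphi$, factor each side as an $L^1$--$L^\infty$ pairing using the hypotheses $u/\delta^s\in L^1(\Omega)$ and $f\delta^s\in L^1(\Omega)$, and pass to the limit via the weak-$\star$ convergences of \Cref{lem:approximation of test functions}. The only difference is cosmetic (you write $\eta_\ee$ where the paper uses $\eta_{1/k}$, and you spell out the duality more explicitly).
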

	
	\begin{proof}
		Let $\varphi \in \Xs$. Consider an approximation $\eta_{\frac 1 k} \varphi \in \Xs \cap C_c (\Omega)$. Since ${u}/{\delta^s} \in L^1 (\Omega)$ then
		\begin{align}
		\int_ \Omega u ( -\Delta)^s (\eta_{\frac 1 k} \varphi) &= \int_ \Omega f \varphi \eta_{\frac 1 k}, \\
		\int_ \Omega \frac{u}{\delta^s} \delta^s ( -\Delta)^s (\eta_{\frac 1 k} \varphi) &= \int_ \Omega f \delta^s \frac {\varphi \eta_{\frac 1 k}}{\delta^s}.
		\end{align}
		By passing to the limit applying \Cref{lem:approximation of test functions}
		\begin{equation}
		\int_ \Omega u ( -\Delta)^s  \varphi  = \int_ \Omega f \varphi.
		\end{equation}
		This proves the result.
	\end{proof}
	As a corollary of the uniqueness of \eqref{eq:FDE Brezis no potential} we have the following
	\begin{proposition}
		There is, at most, one solution of \eqref{eq:FDE local no potential} such that $ u /{\delta^s} \in L^1 (\Omega)$.
	\end{proposition}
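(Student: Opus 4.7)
The plan is to reduce the statement directly to the uniqueness of very weak solutions of the Brezis-type problem \eqref{eq:FDE Brezis no potential}, which is already available from \Cref{thm:chen+veron} of Chen--V\'eron. The key observation is that the preceding \Cref{prop:no potencial:local solution and weighted implies vws} is exactly the bridge between the two notions when the weighted integrability $u/\delta^s \in L^1(\Omega)$ holds.

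Concretely, suppose $u_1, u_2$ are two solutions of \eqref{eq:FDE local no potential} associated with the same datum $f \in L^1(\Omega,\delta^s)$, each satisfying $u_i/\delta^s \in L^1(\Omega)$. By \Cref{prop:no potencial:local solution and weighted implies vws}, each $u_i$ is in fact a very weak solution of \eqref{eq:FDE Brezis no potential} in the sense of \Cref{defn:vws Brezis no potential}, i.e.\ the equality
\begin{equation*}
\int_\Omega u_i (-\Delta)^s \varphi \, dx = \int_\Omega f \varphi \, dx
\end{equation*}
now holds for every $\varphi \in \Xs$, not merely for $\varphi \in \Xs \cap C_c(\Omega)$. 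At this point \Cref{thm:chen+veron} applies and yields $u_1 = u_2$ a.e.\ in $\Omega$.

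Note that $f \in L^1(\Omega,\delta^s)$ is implicit in the setup: the hypothesis $u/\delta^s \in L^1(\Omega)$ combined with \Cref{prop:nec and suf condition for u over deltas in L1} guarantees $f \varphi_\delta \in L^1(\Omega)$, and since $\varphi_\delta \gtrsim \delta^s$, this forces $f \delta^s \in L^1(\Omega)$, so the Chen--V\'eron framework is indeed available. There is no genuine obstacle in this argument; the whole content lies in the preceding approximation lemma \Cref{lem:approximation of test functions}, whose work has already been done. Thus the proposition is just a one-line corollary of \Cref{prop:no potencial:local solution and weighted implies vws} and \Cref{thm:chen+veron}.
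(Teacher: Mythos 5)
Your proof is correct and follows exactly the route the paper takes: the paper states the proposition as a corollary of \Cref{prop:no potencial:local solution and weighted implies vws} together with the uniqueness in \Cref{thm:chen+veron}, which is precisely what you spell out. The only soft spot is the final paragraph: invoking \Cref{prop:nec and suf condition for u over deltas in L1} to recover $f\delta^s\in L^1(\Omega)$ is mildly circular, since that proposition is stated for solutions of \eqref{eq:FDE Brezis no potential} and already assumes $f\delta^s\in L^1(\Omega)$; but this is immaterial because $f\in L^1(\Omega,\delta^s)$ is the standing hypothesis of the whole section (and is needed anyway to apply \Cref{prop:no potencial:local solution and weighted implies vws} and Chen--V\'eron), so nothing needs to be re-derived.
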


\paragraph{Summary}  It is obvious that $
	\eqref{eq:FDE Brezis no potential}  \implies  \eqref{eq:FDE local no potential}$.
	\Cref{prop:nec and suf condition for u over deltas in L1} states that
	\begin{eqnarray}
	\begin{dcases}
	\eqref{eq:FDE Brezis no potential} \textrm{ and}\\
	f \varphi_\delta \in L^1(\Omega)
	\end{dcases}
	\implies
	\begin{dcases}
	\eqref{eq:FDE Brezis no potential} \textrm{ and}\\
	\frac{u}{\delta^s} \in L^1(\Omega)
	\end{dcases}
	\end{eqnarray}
	\Cref{prop:no potencial:local solution and weighted implies vws} states:		
	\begin{eqnarray}
	\eqref{eq:FDE Brezis no potential}  \impliedby  \begin{dcases}
	\eqref{eq:FDE local no potential} \textrm{ and}\\
	\frac{u}{\delta^s} \in L^1(\Omega)
	\end{dcases}
	\end{eqnarray}
	Combining both facts:	
	\begin{eqnarray}
	\begin{dcases}
	\eqref{eq:FDE Brezis no potential} \textrm{ and}\\
	f \varphi_\delta \in L^1(\Omega)
	\end{dcases}
	\iff
	\begin{dcases}
	\eqref{eq:FDE local no potential} \textrm{ and}\\
	\frac{u}{\delta^s} \in L^1(\Omega).
	\end{dcases}
	\end{eqnarray}
\normalcolor

Finally, let us state the following comparison result.

 	\begin{proposition}[Comparison principle] \label{lem:comparison principle}
	Assume that
	\begin{equation} \label{eq:maximum principle}
	\begin{dcases}
	\int_{ \Omega } u (-\Delta)^s \varphi \le 0 \qquad \forall 0 \le \varphi \in \Xs \cap C_c (\Omega) ,\\
	\frac u {\delta^s} \in L^1(\Omega) .
	\end{dcases}
	\end{equation}
	Then $u \le 0$.
\end{proposition}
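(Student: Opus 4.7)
The plan is to use a duality argument in the spirit of \Cref{prop:no potencial:local solution and weighted implies vws}. For an arbitrary nonnegative $g \in L^\infty(\Omega)$, I would let $\varphi$ solve the auxiliary Dirichlet problem
\begin{equation*}
\begin{dcases}
(-\Delta)^s \varphi = g & \Omega, \\
\varphi = 0 & \mathbb R^n \setminus \Omega.
\end{dcases}
\end{equation*}
By the regularity theory recalled in \Cref{prop:Ros-Oton}, $\varphi \in \Xs$, and by the maximum principle (\Cref{thm:chen+veron}) $\varphi \ge 0$ in $\Omega$. The point of this choice is that $g$ is arbitrary nonnegative and bounded: if one can establish $\int_\Omega u g \le 0$ for every such $g$, then $u \le 0$ almost everywhere.

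The problem is that $\varphi$ itself is generally not compactly supported in $\Omega$, so it is not an admissible test function in hypothesis \eqref{eq:maximum principle}. This is exactly where the cut-off from \Cref{lem:approximation of test functions} comes in. I would set $\varphi_\ee \defeq \varphi \eta_\ee$, which, by construction, belongs to $\Xs \cap C_c(\Omega)$ and is nonnegative, so it is admissible. Applying \eqref{eq:maximum principle} and rewriting the integral against the weight $\delta^s$ gives
\begin{equation*}
0 \ge \int_\Omega u \, (-\Delta)^s (\varphi \eta_\ee) \, dx = \int_\Omega \frac{u}{\delta^s} \, \bigl[ \delta^s (-\Delta)^s (\varphi \eta_\ee) \bigr] dx.
\end{equation*}

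Now I invoke \eqref{eq:approximation of test functions}: as $\ee \to 0$,
\begin{equation*}
\delta^s (-\Delta)^s (\varphi \eta_\ee) \rightharpoonup \delta^s (-\Delta)^s \varphi = \delta^s g \qquad \text{in } L^\infty\text{-weak-}\star.
\end{equation*}
Since $u/\delta^s \in L^1(\Omega)$ by hypothesis, the duality $L^1$–$L^\infty$ allows me to pass to the limit, obtaining
\begin{equation*}
\int_\Omega u\, g \, dx = \int_\Omega \frac{u}{\delta^s} \, \delta^s g \, dx \le 0.
\end{equation*}
As $g \ge 0$ ranges over $L^\infty(\Omega)$ (or any suitable dense subclass, e.g.\ $\mathcal C_c^\infty(\Omega)$), we conclude $u \le 0$ almost everywhere in $\Omega$, and $u = 0$ outside $\Omega$ by the a.e.\ vanishing condition built into the very weak framework.

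The only delicate point is the passage to the limit, which rests entirely on the uniform control $\|\delta^s (-\Delta)^s(\varphi \eta_\ee)\|_{L^\infty} \le C$ proved in the lemmas preceding \Cref{lem:approximation of test functions}; everything else is bookkeeping. Note that positivity of $\varphi$ (hence of $\varphi \eta_\ee$) is essential in order to be allowed to use the hypothesis with nonnegative test functions only.
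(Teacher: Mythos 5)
Your proof is correct and takes essentially the same route as the paper: both extend the test inequality from $\Xs \cap C_c(\Omega)$ to nonnegative $\varphi \in \Xs$ via the cut-off $\eta_\ee$ and \Cref{lem:approximation of test functions}, then dualize against a solution of $(-\Delta)^s\varphi = g$. The only cosmetic difference is that you let $g \ge 0$ range over all of $L^\infty(\Omega)$, whereas the paper makes the single choice $g = \sign_+ u$ to conclude directly that $\int_\Omega u_+ \le 0$.
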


\begin{proof}[Proof of \Cref{lem:comparison principle}] \label{proof of lem:comparison principle}
	Let $\varphi \in \Xs$. Take $\varphi_k  = \varphi \eta_{\frac 1 k} \in \Xs \cap C_c (\Omega)$. Then, by \Cref{lem:approximation of test functions},
	\begin{equation}
	0 \ge \int_{ \Omega } u (-\Delta)^s \varphi_k = \int_{ \Omega } \frac{u}{\delta^s} \delta^s (-\Delta)^s \varphi_k \to \int_{ \Omega } \frac{u}{\delta^s} \delta^s (-\Delta)^s \varphi = \int_{ \Omega } u (-\Delta)^s \varphi.
	\end{equation}
	Hence, taking the test function solution of
	\begin{equation}
	\begin{dcases}
	(-\Delta)^s \varphi = \sign_+ u & \Omega, \\
	\varphi = 0 & \Omega^c,
	\end{dcases}
	\end{equation}
	we deduce that
	\begin{equation}
	\int_ \Omega u_+ \le 0.
	\end{equation}
	Hence $u_+ = 0$. This completes the proof.
\end{proof}

\begin{remark}
	We notice that for $0 < s < 1$ we have  $\delta^{-s} \in L^1(\Omega)$, unlike for $s = 1$. This makes $ u /{\delta^s} \in L^1$ not entirely a ``boundary condition''. For $s = 1$, \eqref{eq:maximum principle} was shown in \cite{diaz+gc+rakotoson2017schrodinger}. On the other hand, in the limit $s = 0$ it says
	\begin{equation}
	\begin{dcases}
	u = (-\Delta)^0 u \le 0 & \Omega \\
	u = \frac{u}{\delta^0} \in L^1 (\Omega)
	\end{dcases}
	\end{equation}
	The second item gives no information, but still the result is trivially true for $s = 0$, and all the information comes from the operator. For $s = 1$ most of the information came from the integral condition. For the interpolation $0 < s < 1$, the responsibility needs to be shared.
\end{remark}
To give an intuition on how much more information the fractional Laplacian $s<1 $ has with respect to the classical Laplacian ($s=1$) we provide the following example:
\begin{example}
	Let $u_c = c \chi_\Omega$ (where $\chi$ is the characteristic function). Then:
	\begin{enumerate}
		\item[(i)] if $c < 0$ then $(-\Delta)^s u_c (x) < 0 $ in $\Omega$,
		\item[(ii)] if $c > 0$ then $(-\Delta)^s u_c (x) > 0$ in $\Omega$.
	\end{enumerate}
For the proof note that for every $x\in\Omega$ we have
$$
(-\Delta)^s u_c (x)=c\int_{\Omega^c}\frac{dy}{|x-y|^{n+2s}}\,.
$$
Both signs are reversed for  $x\in \Omega^c$. This property is obviously false for $s = 1$.
\end{example}


\subsection{Accretivity}

In the study of evolution equations associated to elliptic operators the property of \sl accretivity \rm plays an important role since it can be used as  a basic tool in the solution of associated parabolic problems and the generation of the  corresponding semigroups, \cite{brezis2010functional}.
 We say that a (possibly unbounded or nonlinear) operator $A$ acting in a Banach space $X$ is accretive if for every $u_1, u_2\in D(A)$, the domain of the operator $D(A)\subset X$, and every $\lambda>0$ we have
\begin{equation}\label{accret}
\| u_1-u_2\|_X\le \|f_1-f_2\|_X\,,
\end{equation}
where $f_i=u_i+ \lambda Lu_i$, $i=1,2$. This is a contractivity property. Moreover, an accretive operator is called  $m$-accretive if the problem $f=u+ \lambda Lu$ an be solved for every $f\in X$ and every $\lambda>0$ and the solution
$u$ lies in $D(A)$. The Crandall-Liggett Theorem \cite{crandall1971generation} implies that, when $L$ is an $m$-accretive operator in a Banach space $X$, we can solve the evolution problem
$$
\partial_t u(t)+ Lu(t)=f(t)
$$
for every initial data $u(0)\in X$ for every $f\in L^1 (0,\infty; X)$, and find a unique generalized solution $u\in C([0,\infty); X)$ that solves this initial-value problem in the so-called mild sense.

A further concept is $T$-accretivity, that incorporates the maximum principle and applies to ordered Banach spaces, like spaces of real functions. It reads
\begin{equation}\label{Taccret}
\| ( u_1-u_2)_+\|_X\le \|(f_1-f_2)_+\|_X.
\end{equation}
under the same assumptions as in \eqref{accret}.

The  results of the preceding subsections allow to prove the first part of the following
statement.

\begin{proposition}
	The fractional Laplacian operator $L=(-\Delta)^s$ is $m$-$T$-accretive in the space $L^1(\Omega)$ and also in the spaces $L^1(\Omega; \phi)$ for all positive weights  $\phi\in X^s$, such that $(-\Delta)^s \phi\ge 0$. The restricted Laplacian operator is also $m$-$T$-accretive in the spaces $L^p(\Omega)$ with $1<p\le \infty$.
\end{proposition}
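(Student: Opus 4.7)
The plan is to operate at the level of the resolvent $G_\lambda := (I+\lambda(-\Delta)^s)^{-1}$. For a linear operator, the $m$-$T$-accretivity assertion in a Banach function space $X$ is equivalent to three conditions: $G_\lambda$ is well-defined as a map $X\to X$, it is order-preserving, and it satisfies $\|(G_\lambda f)_+\|_X \leq \|f_+\|_X$ for every $f\in X$. Existence of $u=G_\lambda f\in L^1(\Omega)$ for every $f$ in any of the spaces under consideration follows from the very weak solution theory with constant potential $V\equiv 1/\lambda$, a special case of \Cref{thm:existence}; since $L^1(\Omega)$, $L^p(\Omega)$ and $L^1(\Omega,\phi)$ with $\phi\in\Xs$ all embed into $L^1(\Omega,\delta^s)$, a unique very weak solution exists, and the $m$-part is settled once the contraction estimates place $u$ in the same space as $f$.

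The heart of the argument is a single application of Kato's inequality (\Cref{lem:Kato}). Setting $u = u_1-u_2$, $f = f_1-f_2$, we have $(-\Delta)^s u = (f-u)/\lambda$ in the very weak sense; \Cref{lem:Kato} yields, for every $0\le\varphi\in\Xs$,
\begin{equation*}
\int_\Omega u_+\,(-\Delta)^s \varphi \leq \int_\Omega \sign_+(u)\,\frac{f-u}{\lambda}\,\varphi \leq \frac{1}{\lambda}\int_\Omega (f_+-u_+)\,\varphi,
\end{equation*}
using $\sign_+(u)\,u=u_+$ and $\sign_+(u)\,f\le f_+$. Rearranging produces the master inequality
\begin{equation*}
\int_\Omega u_+\,\varphi + \lambda \int_\Omega u_+\,(-\Delta)^s\varphi \leq \int_\Omega f_+\,\varphi.
\end{equation*}
Choosing $\varphi = \phi$ with $(-\Delta)^s\phi\ge 0$ discards the second term on the left and delivers $\|u_+\|_{L^1(\Omega,\phi)} \leq \|f_+\|_{L^1(\Omega,\phi)}$, i.e., $T$-accretivity in the weighted space.

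The remaining $L^p$ statements rely on the positivity and symmetry of the resolvent kernel $K_\lambda(x,y)$, which is the Green kernel of the self-adjoint operator $I+\lambda(-\Delta)^s$. The $L^\infty$-contractivity $\|G_\lambda f\|_\infty \leq \|f\|_\infty$ is established by the maximum principle for $I+\lambda(-\Delta)^s$ (or equivalently by applying Kato to $(u-\|f\|_\infty)_+$, which vanishes outside $\Omega$ because $\|f\|_\infty\ge 0$), and $T$-accretivity in $L^\infty$ follows at once. Self-adjointness of $G_\lambda$ combined with $L^\infty$-contractivity yields $L^1$-contractivity by duality; $T$-accretivity in $L^1(\Omega)$ then comes from the pointwise bound $(G_\lambda f)_+ \leq G_\lambda f_+$ implied by the positivity of $K_\lambda$; and Riesz--Thorin interpolation handles the intermediate $1<p<\infty$.

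The main obstacle is that the constant function $1$ does not belong to $\Xs$, so it cannot be used as a test function in the master inequality above. This is why the direct Kato route only produces weighted $L^1(\Omega,\phi)$ accretivity, and the unweighted $L^1(\Omega)$ case must be obtained indirectly via the $L^\infty$ bound and the symmetry encoded in \eqref{eq:Green function estimates}. A minor technical point is that \Cref{lem:Kato} is stated for the pure equation $(-\Delta)^s u = f$; its application to $(-\Delta)^s u = (f-u)/\lambda$ is legitimate because $u\in L^1(\Omega)\subset L^1(\Omega,\delta^s)$ ensures the modified right-hand side remains in the admissible class.
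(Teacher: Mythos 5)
Your argument for $L^1(\Omega,\phi)$ — applying Kato's inequality to the resolvent equation and then discarding the term $\lambda\int u_+(-\Delta)^s\phi\ge 0$ — is in substance the same as the paper's, which carries out the equivalent generator-level computation $\int Lu\,\sign(u)\,\phi \ge \int L|u|\,\phi = \int |u|\,L\phi \ge 0$ using Kato and the self-adjointness from \eqref{double}; you just do it at the resolvent level and with $\sign_+$, which makes the $T$-accretivity explicit. For the $L^p$ range, however, you take a genuinely different route: the paper quotes the Stroock--Varopoulos inequality (\Cref{lem:S-V}) to get $L^p$-accretivity for $1<p<\infty$ and then lets $p\to\infty$, whereas you establish $L^\infty$-contractivity from the maximum principle, transfer it to $L^1$ by self-adjoint duality, and interpolate by Riesz--Thorin for intermediate $p$, reading off the $T$-part from the pointwise order-preservation $(G_\lambda f)_+\le G_\lambda f_+$. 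Your route has the advantage that it handles the unweighted $L^1(\Omega)$ case cleanly — a point the paper leaves implicit, and your observation that $\chi_\Omega\notin\Xs$ (so the weighted Kato computation cannot be run with $\phi\equiv 1$) is a real obstruction worth flagging; the paper's route has the advantage that the Stroock--Varopoulos estimate quantifies the dissipation via $\|(-\Delta)^{s/2}|v|^{p/2}\|_{L^2}^2$, which is more information than mere contractivity. One small point: to justify the embedding $L^1(\Omega,\phi)\hookrightarrow L^1(\Omega,\delta^s)$ you invoke, you should note that the Hopf-type lower bound (\Cref{prop:Hopf}) applied to $(-\Delta)^s\phi\ge 0$, $\phi\not\equiv 0$, gives $\phi\ge c\,\delta^s$. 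Both proofs are valid.
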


For the accretivity in $L^1(\Omega; \phi)$ we have to check that
$$
\int_{\Omega} Lu \sign(u)\phi\,dx=\int_{\Omega} L|u|\phi\,dx=
\int_{\Omega} |u|L\phi\,dx\ge 0,
$$
where we use Kato's inequality, see \Cref{lem:Kato}, and the symmetry implied by \eqref{double}.

The last statement for finite $p>1$ admits an easy proof that uses the Stroock-Varopoulos  inequality for weak solutions that we quote from \cite{dPQRV-MR2954615}, Lemma 5.1:

\begin{lemma}[Stroock-Varopoulos' inequality] \label{lem:S-V}
Let $0<s<1$, $p>1$.
Then
\begin{equation}\label{eq:strook.varopoulos}
\int_{\mathbb{R}^N}(|v|^{p-2}v)(-\Delta)^{s} v \ge
\frac{4(p-1)}{p^2}\int_{\mathbb{R}^N}\left|(-\Delta)^{s/2}|v|^{p/2}\right|^2
\end{equation}
for all $v\in L^p(\mathbb{R}^n)$ such that $(-\Delta)^{s}v\in
L^p(\mathbb{R}^n) $.
\end{lemma}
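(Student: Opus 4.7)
The plan is to rewrite both sides as symmetric singular double integrals using the Riesz representation of the fractional Laplacian and reduce \eqref{eq:strook.varopoulos} to an elementary pointwise inequality, which is then handled via Cauchy--Schwarz. The only serious work beyond this reduction is a density/regularization argument that justifies the bilinear identities under the bare hypothesis $v,(-\Delta)^{s}v\in L^{p}(\mathbb R^{n})$.

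First I would invoke the symmetric bilinear form
\begin{equation*}
\int_{\mathbb R^{n}}w\,(-\Delta)^{s}v\,dx=\frac{c_{n,s}}{2}\iint_{\mathbb R^{n}\times\mathbb R^{n}}\frac{(v(x)-v(y))(w(x)-w(y))}{|x-y|^{n+2s}}\,dx\,dy,
\end{equation*}
which is the global version of \eqref{double}. Applied with $w=|v|^{p-2}v$ it converts the left-hand side of \eqref{eq:strook.varopoulos} into $\frac{c_{n,s}}{2}$ times $\iint\frac{(v(x)-v(y))(|v|^{p-2}v(x)-|v|^{p-2}v(y))}{|x-y|^{n+2s}}\,dx\,dy$. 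On the other hand, specializing \Cref{prop:fractional integration by parts} with $u=v=|v|^{p/2}$ rewrites the right-hand side as $\frac{c_{n,s}}{2}\iint\frac{(|v(x)|^{p/2}-|v(y)|^{p/2})^{2}}{|x-y|^{n+2s}}\,dx\,dy$. Thus it suffices to prove the scalar inequality
\begin{equation*}
(|a|^{p-2}a-|b|^{p-2}b)(a-b)\;\ge\;\frac{4(p-1)}{p^{2}}\bigl(|a|^{p/2}-|b|^{p/2}\bigr)^{2}\qquad \text{for all } a,b\in\mathbb R.
\end{equation*}

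For the pointwise bound I would set $\psi(t)=|t|^{p-2}t$ and $\phi(t)=\sign(t)\,|t|^{p/2}$, which are absolutely continuous with $\psi'(t)=(p-1)|t|^{p-2}$ and $\phi'(t)=\tfrac{p}{2}|t|^{p/2-1}$ off the origin, satisfying $(\phi')^{2}=\tfrac{p^{2}}{4(p-1)}\,\psi'$. Writing $\psi(a)-\psi(b)=\int_{b}^{a}\psi'$ and $\phi(a)-\phi(b)=\int_{b}^{a}\phi'$, Cauchy--Schwarz yields
\begin{equation*}
(\phi(a)-\phi(b))^{2}\le (a-b)\int_{b}^{a}(\phi'(t))^{2}\,dt=\frac{p^{2}}{4(p-1)}(a-b)\,(\psi(a)-\psi(b)).
\end{equation*}
A case analysis on the signs of $a,b$ shows $|\phi(a)-\phi(b)|\ge \bigl||a|^{p/2}-|b|^{p/2}\bigr|$ (equality when $a,b$ have the same sign, strict otherwise), completing the pointwise bound. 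Integrating against $c_{n,s}|x-y|^{-n-2s}$ and dividing by $2$ delivers \eqref{eq:strook.varopoulos}.

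The main obstacle is to justify the bilinear representations for $v\in L^{p}(\mathbb R^{n})$ with $(-\Delta)^{s}v\in L^{p}(\mathbb R^{n})$, because in the range $1<p<2$ the function $|v|^{p-2}v$ need not belong to any standard Bessel-potential space. I would bypass this by regularizing: let $v_{\varepsilon}=\rho_{\varepsilon}\ast v$ (or $v_\varepsilon = e^{-\varepsilon(-\Delta)^{s}}v$) to obtain smooth, rapidly decaying approximants for which the bilinear form identities hold classically and \eqref{eq:strook.varopoulos} follows from the pointwise inequality. Passing to the limit $\varepsilon\downarrow 0$, the left-hand side converges by the $L^{p}$-convergence $(-\Delta)^{s}v_{\varepsilon}\to (-\Delta)^{s}v$ together with $|v_{\varepsilon}|^{p-2}v_{\varepsilon}\to |v|^{p-2}v$ in $L^{p'}(\mathbb R^{n})$, while the non-negative double-integral on the right-hand side is lower semicontinuous by Fatou's lemma, which preserves the inequality in the limit.
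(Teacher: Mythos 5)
The paper does not actually prove this lemma --- it is quoted verbatim from the reference \cite{dPQRV-MR2954615} (Lemma 5.1 there) without proof, so there is no ``paper proof'' to compare against. Your argument is correct and, for what it is worth, reproduces the standard proof one finds in that reference and elsewhere: rewrite both sides as symmetric Gagliardo-type double integrals, reduce to the pointwise inequality
\begin{equation*}
(a-b)\bigl(|a|^{p-2}a-|b|^{p-2}b\bigr)\;\ge\;\frac{4(p-1)}{p^{2}}\bigl(|a|^{p/2}-|b|^{p/2}\bigr)^{2},
\end{equation*}
and prove that by Cauchy--Schwarz on $\int_b^a\phi'$ together with the identity $(\phi')^{2}=\tfrac{p^{2}}{4(p-1)}\psi'$ for $\phi(t)=\sign(t)|t|^{p/2}$, $\psi(t)=|t|^{p-2}t$. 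A couple of remarks to sharpen the write-up. First, your intermediate estimate actually yields the stronger (and more commonly stated) form with the signed quantity $\sign(v)\,|v|^{p/2}$ in place of $|v|^{p/2}$; the version in the paper then follows from $|\phi(a)-\phi(b)|\ge\bigl||a|^{p/2}-|b|^{p/2}\bigr|$, which you correctly observe. Second, the final density step is the one place where some care is genuinely needed and should not be waved off: you should make explicit that (i) $(-\Delta)^{s}(\rho_{\varepsilon}\ast v)=\rho_{\varepsilon}\ast(-\Delta)^{s}v\to(-\Delta)^{s}v$ in $L^{p}$, (ii) $|v_{\varepsilon}|^{p-2}v_{\varepsilon}\to|v|^{p-2}v$ in $L^{p'}$ (a.e.\ convergence plus equi-integrability, since the $L^{p'}$ norm of $|v_{\varepsilon}|^{p-2}v_{\varepsilon}$ equals $\|v_{\varepsilon}\|_{L^{p}}^{p-1}$, which converges), so the left side passes to the limit by duality; and (iii) the right side is handled by Fatou on the double integral, and then one identifies the resulting finite double integral with $\|(-\Delta)^{s/2}|v|^{p/2}\|_{L^{2}}^{2}$, which in particular shows a posteriori that $|v|^{p/2}\in\dot H^{s}$. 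With those points spelled out, the proof is complete.
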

This is done for functions defined in $\mathbb{R}^n$, when working in a bounded domain we recall that $v=0$ outside of $\Omega$. The inequality not only shows that the operator is accretive but it measures its amount in terms of a square norm of the fractional operator of half order.

The application for very weak solutions is obtained by passage to the limit. For $p=\infty$ pass to the limit in the result for finite $p$.


\subsection{Comparison with the class of large solutions}

The theory we have described asks  for zero boundary conditions, but only  in some generalized sense. An immediate extension of our class of solutions is the class of large solutions that has  been studied by \cite{abatangelo2013large}. These solutions blow-up at the boundary,  which is explained by the presence of some singular boundary measure in the weak formulation. See also  \cite{FelmerMR2985500}. The typical example is
$$
u_{1-s}(x) =\frac{c(n,s)}{(1 -|x|^2)^{1-s}} \qquad \mbox{in } B_R(0)
$$
with $u_{1-s}(x) =0$ outside. This function is found in  \cite{bogdanMR2569321}, where it is proved that $u_{1-s}$   satisfies
$$
(-\Delta)^s u_{1-s}=0 \qquad\mbox{pointwise in} \ B.
$$
Since $u_{1-s}(x)/\delta(x)^s\asymp c/\delta(x)$, which is not integrable near the boundary, we are sure that this is a large solution and not a very weak solution of the Dirichlet Problem as in the preceding theory. The divergence $\delta(x)^{-1}$ is just borderline for our class of very weak solutions, and this is another proof of optimality for our theory.
\normalcolor


\section{Schr\"odinger problem with positive potentials}\label{sec.schr1}

 Here we extend previous results by authors in \cite{diaz+gc+rakotoson+temam:2018veryweak,diaz+gc+rakotoson2017schrodinger} dealing with the classical stationary Schr\"odinger equation to fractional operators, i.e. to problem \eqref{eq:FDE}. As a preliminary, we start by the easier case of bounded potentials and functions.

By analogy to Definitions \ref{defn:ws no potential} and \ref{defn:vws Brezis no potential}  we introduce

\begin{definition} \label{defn:weak solution}
	{Let $f \in L^2 (\Omega)$.} A {\sl weak solution} of \eqref{eq:FDE} if a function $u \in H^s_0 (\Omega)$, \normalcolor $Vu\in L^2(\Omega)$, \normalcolor and  such that
	\begin{equation} \tag{P$_{\textrm{w}}$} \label{eq:FDE weak}		
	\int_ {\mathbb R^n } (-\Delta)^{\frac s 2} u ( -\Delta)^{\frac s 2 } \varphi + \int_ \Omega Vu \varphi = \int_{ \Omega } f \varphi
	\end{equation}
	for all $\varphi \in H_0^s (\Omega)$.
{This definition can be extended by asking that $f, Vu \in (H_0^s (\Omega))'$.}
\end{definition}

\begin{definition}
{We assume that $f \in L^1 (\Omega, \delta^s)$.} We say that $u$ is a {\sl very weak solution} of \eqref{eq:FDE}
	if
	\begin{equation} \label{eq:FDE Brezis} \tag{P$_{\textrm {vw}}$}
	\begin{dcases}
	u\in L^1 (\Omega),  \\
	u = 0 \textrm{ a.e. } \mathbb R^n \setminus \Omega \textrm{ and } \\
{Vu \delta^s \in L^1 (\Omega)}, \\
	\int _\Omega u (-\Delta)^s \varphi  + \int_ \Omega V u \varphi = \int_{\Omega} f \varphi , \qquad \forall \varphi \in \Xs,
	\end{dcases}
	\end{equation}
	where $\Xs$ is given by \eqref{eq:defn Xs}.
\end{definition}

As seen in the previous section, the concept of weak solution will not be sufficient to solve the Dirichlet Problem with general data. Moreover,  through \Cref{prop:fractional integration by parts} it is trivial to show that

\begin{lemma} \label{lem:weak implies very weak}
	If $u \in H_0^s (\Omega)$ is a weak solution of \eqref{eq:FDE} {with $f \in L^2 (\Omega)$} in the sense of \eqref{eq:FDE weak}, then it is a very weak solution of \eqref{eq:FDE} in the sense of \eqref{eq:FDE Brezis}.
\end{lemma}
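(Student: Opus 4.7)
The plan is to verify, in order, the four requirements of the very weak formulation \eqref{eq:FDE Brezis}: (i) $u \in L^1(\Omega)$, (ii) $u = 0$ a.e.\ in $\mathbb{R}^n\setminus\Omega$, (iii) $Vu\,\delta^s \in L^1(\Omega)$, and (iv) the test-function identity against every $\varphi \in \Xs$. The first three are short: (i) and (ii) are immediate from $u \in H_0^s(\Omega) \hookrightarrow L^2(\Omega) \hookrightarrow L^1(\Omega)$ together with the very definition of $H_0^s(\Omega)$; and (iii) follows by H\"older's inequality since the weak formulation postulates $Vu \in L^2(\Omega)$ while $\delta^s \in L^\infty(\Omega)$ on the bounded domain $\Omega$.

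For (iv) I would fix $\varphi \in \Xs$ and plug it into \eqref{eq:FDE weak}, then transform the quadratic pairing $\int_{\mathbb{R}^n}(-\Delta)^{s/2}u\,(-\Delta)^{s/2}\varphi$ into the duality pairing $\int_\Omega u\,(-\Delta)^s\varphi$. Two points require care. First, $\varphi$ is only assumed $C^s(\mathbb{R}^n)$ with zero exterior data and $(-\Delta)^s\varphi \in L^\infty(\Omega)$, so it is not automatic that $\varphi$ is admissible as a test function in \eqref{eq:FDE weak}. To show $\varphi \in H_0^s(\Omega)$, I would set $g = (-\Delta)^s\varphi \in L^\infty(\Omega) \subset L^2(\Omega)$, solve by Lax-Milgram the weak problem $(-\Delta)^s w = g$ with $w\equiv 0$ outside $\Omega$ to obtain $w \in H_0^s(\Omega)$, observe that $w$ is then also a very weak solution with datum $g$, and invoke the uniqueness in \Cref{thm:chen+veron} to identify $\varphi = w \in H_0^s(\Omega)$.

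Second, once $\varphi \in H_0^s(\Omega)$ is admissible in \eqref{eq:FDE weak}, I would convert the quadratic form via \Cref{prop:fractional integration by parts} to obtain
\begin{equation*}
\int_{\mathbb{R}^n}(-\Delta)^{s/2}u\,(-\Delta)^{s/2}\varphi \;=\; \int_\Omega u\,(-\Delta)^s\varphi,
\end{equation*}
at which point substitution in \eqref{eq:FDE weak} reproduces exactly the identity demanded in \eqref{eq:FDE Brezis}.

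The main obstacle is justifying this last integration-by-parts for the particular $\varphi$ at hand: the density argument of the remark after \Cref{prop:fractional integration by parts} asks for $\varphi \in H^{2s}(\Omega)\cap H_0^s(\Omega)$, which is delicate for $s>\tfrac12$ since the generic boundary profile $\varphi\sim\delta^s$ just barely misses $H^{2s}$. The cleanest workaround is to use instead the symmetric double-integral form \eqref{double}, which is well-defined as soon as $u,\varphi \in H^s(\mathbb{R}^n)$; because $(-\Delta)^s\varphi \in L^\infty(\Omega)$ is also available as a pointwise principal value and $u \in L^1(\Omega)$ vanishes outside $\Omega$, Fubini together with the standard symmetrization recovers $\int_\Omega u\,(-\Delta)^s\varphi$ directly. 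An alternative is to approximate $\varphi$ by the truncations $\eta_\varepsilon\varphi$ of \Cref{lem:approximation of test functions} (or by mollifications of $\varphi$ pushed into $C_c^\infty(\Omega)$) and pass to the limit, relying on the uniform control of $\delta^{2s}(-\Delta)^s(\eta_\varepsilon\varphi)$ established there.
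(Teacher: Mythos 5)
Your proof follows the same route as the paper, which dismisses the lemma as ``trivial'' through \Cref{prop:fractional integration by parts}; you simply take seriously the two technical points the paper sweeps under the rug, namely that a test function $\varphi\in\Xs$ must first be shown admissible in \eqref{eq:FDE weak} and that the quadratic pairing must be converted to $\int_\Omega u\,(-\Delta)^s\varphi$. Items (i)--(iii) of your check are fine, and your two suggested ways of justifying the integration by parts (the symmetric double integral \eqref{double}, or the truncations $\eta_\ee\varphi$ of \Cref{lem:approximation of test functions}) are both sensible once $\varphi\in H^s_0(\Omega)$ is secured.

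The one place where the argument as written is not quite self-contained is the identification $\varphi = w$. You invoke the uniqueness of very weak solutions from \Cref{thm:chen+veron}, but that theorem identifies two objects only if both are already known to be very weak solutions. You verify this for $w$ (Lax--Milgram solution $\Rightarrow$ very weak solution, as in \Cref{rem:to the definition of weak sol of P0}~1)), but not for $\varphi$: showing that a generic $\varphi\in\Xs$ satisfies $\int_\Omega \varphi\,(-\Delta)^s\psi = \int_\Omega g\,\psi$ for all $\psi\in\Xs$ is itself a symmetry statement of exactly the type you are in the process of establishing, so the step is, at a minimum, circular in flavour. It can be patched cleanly: either appeal to the characterisation $\Xs=\Gs(L^\infty(\Omega))$ from Section~\ref{sec:Green operator viewpoint} (so that $\varphi$ is by construction the Green-kernel solution of a problem with $L^\infty$ datum, hence in $H^s_0(\Omega)$ by Lax--Milgram), or note that the Ros-Oton--Serra solution $w$ with $g\in L^\infty$ also lies in $C^s(\mathbb R^n)$ and satisfies $(-\Delta)^s w=g$ pointwise a.e., so that $\varphi-w$ is a pointwise $s$-harmonic, $C^s$ function vanishing outside $\Omega$, forcing $\varphi=w$. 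Either route avoids presupposing that $\varphi$ is a very weak solution. With that repair the argument is sound and, in substance, the one the authors leave unwritten.
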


\begin{remark}
		The converse implication, which we indicated as true for \eqref{eq:FDE Laplace} in \Cref{rem:to the definition of weak sol of P0}.3), escapes the interest of this paper. However, since $u \in H_0^s (\Omega)$ and $f \in L^2(\Omega)$ then, it seems natural, although it requires a rigorous proof, that $Vu = f - (-\Delta)^s u \in (H_0^s (\Omega))'$. Even if we do not prove that $Vu \in L^2 (\Omega)$, this would be enough to say that $u$ is a weak solution (in a natural sense).
	\end{remark}

The following result confirms that the class of very weak solutions is not too general

\begin{theorem} \label{thm:uniqueness} 
	Let $f \in L^1 (\Omega, \delta^s)$. There is, at most, one solution of \eqref{eq:FDE Brezis}.
\end{theorem}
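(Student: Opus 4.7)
The plan is to reduce to the zero-potential case via Kato's inequality (\Cref{lem:Kato}) and then conclude with a single, cleverly chosen test function from $\Xs$. Let $u_1, u_2$ be two very weak solutions of \eqref{eq:FDE Brezis} with the same datum $f \in L^1(\Omega, \delta^s)$, and set $w = u_1 - u_2 \in L^1(\Omega)$. Subtracting the two identities, $w$ satisfies
\begin{equation}
\int_\Omega w\,(-\Delta)^s\varphi + \int_\Omega V w\,\varphi = 0, \qquad \forall \varphi \in \Xs,
\end{equation}
with $Vw\,\delta^s \in L^1(\Omega)$ because both $Vu_i\,\delta^s \in L^1(\Omega)$.

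The key observation is that $w$ is a very weak solution (in the sense of \Cref{defn:vws Brezis no potential}) of the zero-potential problem \eqref{eq:FDE Brezis no potential} with right-hand side $g := -Vw$, which lies in $L^1(\Omega, \delta^s)$. We may therefore apply the fractional Kato inequality of Chen--V\'eron (\Cref{lem:Kato}) to $w$, obtaining, for every $0 \le \varphi \in \Xs$,
\begin{equation}
\int_\Omega |w|\,(-\Delta)^s\varphi \;\le\; \int_\Omega \sign(w)\,(-Vw)\,\varphi \;=\; -\int_\Omega V|w|\,\varphi.
\end{equation}
Since $V \ge 0$, the right-hand side is nonpositive, so
\begin{equation}
\int_\Omega |w|\,(-\Delta)^s\varphi + \int_\Omega V|w|\,\varphi \;\le\; 0, \qquad \forall\,0 \le \varphi \in \Xs. \label{eq:plan-sub}
\end{equation}

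To conclude, I would test \eqref{eq:plan-sub} against the torsion function $\varphi^\ast$, i.e.\ the unique very weak solution of $(-\Delta)^s\varphi^\ast = 1$ in $\Omega$, $\varphi^\ast = 0$ in $\Omega^c$. The regularity of Ros-Oton--Serra (\Cref{prop:Ros-Oton}) applied with $f \equiv 1$ guarantees $\varphi^\ast \in \mathcal{C}^s(\mathbb{R}^n)$ with $(-\Delta)^s\varphi^\ast \in L^\infty(\Omega)$, so $\varphi^\ast \in \Xs$, and by the maximum principle ($\Cref{thm:chen+veron}$) $\varphi^\ast \ge 0$; in fact $\varphi^\ast \asymp \delta^s$, which also ensures $V|w|\varphi^\ast \in L^1(\Omega)$. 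Substituting into \eqref{eq:plan-sub} yields
\begin{equation}
\int_\Omega |w| \;\le\; -\int_\Omega V|w|\,\varphi^\ast \;\le\; 0,
\end{equation}
so $w = 0$ a.e.\ in $\Omega$, proving uniqueness.

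The only delicate point is the application of Kato's inequality in the presence of the potential: one must verify that $w$ genuinely qualifies as a very weak solution of a zero-potential problem with right-hand side $-Vw$, which is where the integrability assumption $Vu\,\delta^s \in L^1(\Omega)$ built into \eqref{eq:FDE Brezis} plays its essential role. Everything else is a direct consequence of the linear machinery already developed in \Cref{sec.nopot}.
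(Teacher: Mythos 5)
Your proof is correct and follows essentially the same route as the paper: subtract the two solutions, invoke the Chen--V\'eron/Kato inequality (\Cref{lem:Kato}) to get $\int_\Omega |w|(-\Delta)^s\varphi \le -\int_\Omega V|w|\varphi \le 0$ for all $0\le\varphi\in\Xs$, and conclude. Where the paper's proof ends tersely with ``in particular $|u|\le 0$,'' you make the final step explicit by testing against the torsion function $\varphi^\ast$ with $(-\Delta)^s\varphi^\ast=1$ (and you correctly note that $\varphi^\ast\asymp\delta^s$ keeps $V|w|\varphi^\ast$ integrable), which is exactly the test function implicit in the original.
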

\begin{proof}
	Let $u_1, u_2$ be two solutions. Let $u = u_1 - u_2$. Therefore,
	\begin{equation}
			\int _\Omega u (-\Delta)^s \varphi  =- \int_ \Omega V u \varphi , \qquad \forall \varphi \in \Xs,
	\end{equation}
	Therefore, through \Cref{lem:Kato} and Kato's inequality \Cref{Kato} we have that
	\begin{equation}
		\int _\Omega |u| (-\Delta)^s \varphi  \le 0, \qquad \forall \ 0 \le \varphi \in \Xs,
	\end{equation}
	In particular, $|u| \le 0$. This completes the proof.
\end{proof}


\subsection{The case $V \in L^\infty (\Omega)$}

We now address the question of existence. The simplest case concerns bounded potentials.
When $V \in L^\infty ( \Omega)$, $(-\Delta)^s + V$ is a self-adjoint operator in $L^2(\Omega)$, as $(-\Delta)^s$, which has a positive first eigenvalue  $\lambda_{1} > 0$.  It is easy to see, through the Lax-Milgram theorem, that there exists a unique weak solution.
		
		When $V, f \in L^\infty( \Omega)$ we can apply the regularity estimates in \Cref{prop:Ros-Oton} by bootstrapping. For a fixed solution, we define $g = f - Vu \in L^\infty (\Omega)$, and we know that $u \in \mathcal C^s (\bar \Omega)$.

\begin{lemma} \label{lem:estimates V and f bounded}
	Let $V, f \in L^\infty (\Omega)$. There exists a unique solution $u$ of \eqref{eq:FDE weak}. It satisfies
	\begin{subequations}
	\begin{align}
	\| u \|_{L^1} & \le C \| f \delta^s \|_{L^1}\label{eq:u in L1} \\
	\| V u \delta^s \|_{L^1} &\le C \|f \delta^s \|_{L^1 } \label{eq:V u deltas in L1},
	\end{align}
	\end{subequations}
	where $C$ does not depend on $u$. Furthermore,
		\begin{subequations}
		\begin{align}
		\left \| \frac u {\delta^s} \right \|_{L^1} &\le \|f \varphi_\delta \|_{L^1 }  \label{eq:u deltas in L1}, \\
			\| V u \varphi_ \delta  \|_{L^1} &\le \|f \varphi_\delta \|_{L^1 } \label{eq:V u phi delta in L1}.
		\end{align}
		\end{subequations}
Moreover,
		\begin{subequations}
		\begin{align}
			\| u \|_{L^2 (\Omega)} &\le C \| f \|_{L^2 (\Omega)} \label{eq:u in L2}\\
			\| (-\Delta)^{\frac s 2} u\|_{L^2 (\mathbb R^n)} &\le C \|f \|_{L^2 (\Omega)}. \label{eq:estimate Ls u}
		\end{align}
		\end{subequations}	
	If $f \ge 0$, then $u \ge 0$.
\end{lemma}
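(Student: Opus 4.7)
The plan is to split the statement into three parts: (a) existence, uniqueness and the $L^2$ bounds via the variational framework, (b) the sign property $f\ge 0\Rightarrow u\ge 0$, and (c) the four weighted $L^1$ bounds via duality with explicit test functions. The nonnegativity $V\ge 0$ guarantees that every term of the form $\int_\Omega V|u|\varphi$ that arises has the right sign to be absorbed or dropped.

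For (a), I would verify that the bilinear form
\[
a(u,v)=\int_{\mathbb R^n}(-\Delta)^{s/2}u\,(-\Delta)^{s/2}v+\int_\Omega Vuv
\]
is continuous on $H_0^s(\Omega)$ (thanks to $V\in L^\infty$) and coercive (since $V\ge 0$ and $[\,\cdot\,]_{H^s(\mathbb R^n)}$ is an equivalent norm on $H_0^s(\Omega)$ by fractional Poincar\'e). With $f\in L^2(\Omega)$, Lax--Milgram delivers a unique weak solution $u\in H_0^s(\Omega)$. Testing with $v=u$ and dropping the nonnegative term $\int_\Omega Vu^2$ yields $[u]_{H^s(\mathbb R^n)}^2\le \|f\|_{L^2}\|u\|_{L^2}$, whence \eqref{eq:u in L2} and \eqref{eq:estimate Ls u} by fractional Poincar\'e.

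For (b), I would test the weak formulation with $v=u_-\in H_0^s(\Omega)$. Using the elementary pointwise inequality $(a-b)(a_--b_-)\le -(a_--b_-)^2$ in the double-integral form \eqref{double} of $\int (-\Delta)^{s/2}u\,(-\Delta)^{s/2}u_-$, together with $uu_-=-u_-^2$, one gets $\int_\Omega fu_-\le -[u_-]_{H^s(\mathbb R^n)}^2-\int_\Omega Vu_-^2$. If $f\ge 0$ the left-hand side is nonnegative while the right-hand side is nonpositive, so $u_-\equiv 0$.

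For (c), \Cref{lem:weak implies very weak} ensures that $u$ is also a very weak solution, so \Cref{lem:Kato} applies to $(-\Delta)^s u=f-Vu$; combining the Kato inequality with the identity $\sign(u)\,u=|u|$ gives, for every nonnegative $\varphi\in\Xs$,
\[
\int_\Omega |u|\,(-\Delta)^s\varphi+\int_\Omega V|u|\,\varphi\le \int_\Omega |f|\,\varphi.
\]
Plugging $\varphi=\psi$ with $(-\Delta)^s\psi=1$ in $\Omega$ and $\psi=0$ outside, which belongs to $\Xs$ and satisfies $\psi\asymp\delta^s$ by \Cref{prop:Ros-Oton}, immediately produces \eqref{eq:u in L1} and \eqref{eq:V u deltas in L1}. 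For \eqref{eq:u deltas in L1} and \eqref{eq:V u phi delta in L1} the natural choice $\varphi=\varphi_\delta$ is not admissible because $1/\delta^s\notin L^\infty$, so $\varphi_\delta\notin\Xs$; the remedy is to test instead against the bounded approximants $\varphi_{\delta,k}\in\Xs$ solving $(-\Delta)^s\varphi_{\delta,k}=\min\{1/\delta^s,k\}$ already introduced in the proof of \Cref{prop:nec and suf condition for u over deltas in L1}, and then pass to the limit $k\to\infty$ by monotone convergence (legitimate since $|u|,V|u|,|f|\ge 0$ and $\varphi_{\delta,k}\nearrow\varphi_\delta$ pointwise). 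This failure of $\varphi_\delta$ to lie in $\Xs$ is the only genuine obstacle in the proof; everything else rests on Lax--Milgram, fractional Poincar\'e, Kato and the Ros-Oton--Serra regularity already available from Sections~2--3.
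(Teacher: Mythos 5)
Your proof is correct and lands in essentially the same framework as the paper's (Lax--Milgram for existence, duality against explicit test functions for the weighted bounds), but several of your local choices differ and are worth recording. For the $L^2$ bounds the paper tests against the auxiliary function $\varphi=\Gs u$ and invokes $L^2$-boundedness of the Green operator, whereas you test against $u$ itself and use fractional Poincar\'e; your route is more elementary and avoids appealing to Green-kernel estimates at this point. For the sign-dependent $L^1$ bounds the paper first assumes $f\ge 0$ and then splits $f=f^+-f^-$, while you invoke Kato's inequality applied to $g=f-Vu$ and obtain the inequality $\int_\Omega|u|(-\Delta)^s\varphi+\int_\Omega V|u|\varphi\le\int_\Omega|f|\varphi$ in one stroke; both work, but the Kato route keeps a single unified inequality for all four estimates. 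Finally, where the paper plugs $\varphi_\delta$ directly into the very weak formulation (strictly speaking not admissible since $\varphi_\delta\notin\Xs$, as you point out), you explicitly pass through the truncations $\varphi_{\delta,k}$ and the monotone-convergence limit already established in the proof of \Cref{prop:nec and suf condition for u over deltas in L1}; this spells out a step the paper treats implicitly. Your variational positivity argument via $v=u_-$ in the double-integral form is also a clean replacement for the paper's appeal to ``follows as for the $(-\Delta)^s$ operator.'' One small point you should make explicit for the Kato step: you need $g=f-Vu\in L^1(\Omega,\delta^s)$, which holds because $u\in H_0^s(\Omega)\subset L^2(\Omega)\subset L^1(\Omega)$ and $V\in L^\infty$, so $Vu\in L^1(\Omega)\subset L^1(\Omega,\delta^s)$ on a bounded domain.
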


	Notice that \eqref{eq:u deltas in L1} and \eqref{eq:V u phi delta in L1} hold with constant $1$. In order for \eqref{eq:u in L1} and \eqref{eq:V u deltas in L1} to also hold with constant $1$ we can choose the first eigenfunction of $(-\Delta)^s$, $\varphi_1$, as a weight.

\begin{proof}[Proof of \Cref{lem:estimates V and f bounded}]\label{proof lem:estimates V and f bounded} The existence of a weak solution $u \in H_0^s (\Omega)$ follows from the Lax-Milgram theorem. It is a very weak solution. The fact that, if $f \ge 0$, then $u \ge 0$ follows as for the $(-\Delta)^s$ operator.
		
To compute the estimates, we start by considering $f \ge 0$. Then $u \ge 0$.
		By considering as test function the unique solution of problem
		\begin{equation}
		\begin{dcases}
		(-\Delta)^s \varphi_{0} = 1 & \Omega \\
		\varphi_0 = 0 & \Omega^c .
		\end{dcases}
		\end{equation}	
		From the representation formula \eqref{eq:Green operator} and \eqref{eq:Green function estimates} we know that $\varphi_0 \ge c \delta^s$ and, from the results in \cite{Ros-Oton2014}, that $\varphi_0 \in \Xs$. Therefore
		\begin{align}
		\int_ \Omega u + \int _ \Omega V u \delta^s  \le \int_ \Omega u + \frac{1}{c} \int_{\Omega} Vu \varphi_0  \le C \int_{ \Omega }(  u + V u \varphi_0 )  & \le C\int_{ \Omega }f \delta^{s} \frac{ \varphi_0 } {\delta^s}  \le C \left\|\frac{ \varphi_0 } {\delta^s}\right \|_{L^\infty} \|f\delta^{s}\|_{L^1}
		\end{align}
		so \eqref{eq:u in L1} and \eqref{eq:V u deltas in L1} hold.
		Using $\varphi_\delta$ as a test function in the very weak formulation
		\begin{align}
		0 \le \int_ \Omega \frac u {\delta^s}  + V u \varphi_\delta \le   \int_ \Omega u (-\Delta)^s \varphi_\delta + V u \varphi_\delta &= \int_{\Omega} f \varphi_\delta.
		\end{align}
Hence,
		\begin{align}
		\left\| \frac{ u }{\delta^s}  \right\|_{L^1} \le \|f \varphi_ \delta \|_{L^1}.
		\end{align}
		
			To obtain \eqref{eq:u in L2} we can take as a test function in the very weak formulation the solution of
			\begin{equation}
				\begin{dcases}
					(-\Delta)^s \varphi = u & \Omega, \\
					\varphi = 0 & \partial \Omega.
				\end{dcases}
			\end{equation}
			It is clear that $\varphi \ge 0$ and, due to Green kernel estimates, $\| \varphi \|_{L^2 } \le C \| u \|_{L^2}$. Thus
			\begin{equation}
				\int_{ \Omega } u^2 \le \int_{ \Omega } u^2 + \int_{\Omega} Vu \varphi = \int_{ \Omega } f \varphi \le \| f \|_{L^2} \| \varphi \|_{L^2} \le C \| f \| _{L^2 } \|u \|_{L^2}.
			\end{equation}
			This concludes \eqref{eq:u in L2}.
			Since $u$ is a weak solution, \eqref{eq:estimate Ls u} can be obtained by using $u$ as a test function
			\begin{equation}
				\int_ {\mathbb R^n} |(-\Delta)^{\frac s 2} u|^2 \le \int_ {\mathbb R^n} |(-\Delta)^{\frac s 2} u|^2 + \int_{ \Omega } Vu ^2 = \int_{ \Omega } f u \le \|f \|_{L^2} \|u\|_{L^2} \le C \|f \|_{L^2}^2.
			\end{equation}
Finally, if $f$ changes sign, we can decompose it as $f = f^+ - f^-$, and apply twice the previous result to complete the proof.
	\end{proof}

\begin{remark}
	Due to \Cref{thm:Hardy inequality} applied to the case $p =2$, we know that $u \in H^s_0 (\Omega) \mapsto {u}/{\delta^s} \in L^2 (\Omega)$ is well-defined and continuous. Hence, for $0 \le V \le C \delta^{-2s}$ the following bilinear map is continuous
	\begin{eqnarray}
		H^s_0(\Omega) \times H^s_0 (\Omega) &\longrightarrow & \mathbb R \\
		(u, \varphi) & \longmapsto & \int_ {\Omega} V u \varphi
	\end{eqnarray}
	because
	\begin{equation}
		\left |\int_ {\Omega} V u \varphi \right| = \left| \int_{ \Omega } V \delta^{-2s} \frac{u}{ \delta^s } \frac{ \varphi } {\delta^s} \right| \le C \left \| \frac{u}{\delta^s} \right \|_{L^2 (\Omega) }   \left \| \frac{\varphi }{\delta^s} \right \|_{L^2 (\Omega) }  \le C \|u\|_{H^s (\mathbb R^n)} \| \varphi \|_{H^s (\mathbb R^n)}.
	\end{equation}
	Thus, when $0 \le V \le C \delta^{-2s}$ and $f \in L^2 (\Omega)$, we can also use the Lax-Milgram Theorem to show existence and uniqueness of weak solutions.
\end{remark}

\begin{remark} About regularity for weak solutions of  nonlocal Schr\"odinger equations  in an open set of $\mathbb{R}^n$ subject to exterior Dirichlet, recently Fall \cite{Fall.arxiv2017} proves H\"older regularity estimates for general nonlocal operators defined via Dirichlet forms, by symmetric kernels $K(x, y)$ bounded from above and below by $|x-y|^{-(N+2s)}$, $0<s<1$. See also \cite{Frazier2018}.
\end{remark}


\subsection{General potentials $V \in L^1_{loc} (\Omega)$}

We now consider the problem for $0 \le V \in L^1_{loc} (\Omega)$.  For solutions of \eqref{eq:FDE Brezis}, $Vu \delta^s$ could, in principle, not be in $L^1 (\Omega)$. We introduce the following definition
\begin{definition}
	We say that $u$ is a very weak \emph{local} solution of \eqref{eq:FDE} if
	\begin{equation} \tag{P$_{\textrm{loc}}$}
	\label{eq:FDE local}
	\begin{dcases}
	u\in L^1 (\Omega), u = 0 \textrm{ a.e. } \mathbb R^n \setminus \Omega, \\
	{Vu \in L^1_{loc} (\Omega)} \textrm{ and } \\
	\int _\Omega u [(-\Delta)^s \varphi + V \varphi]  = \int_{\Omega} f \varphi , \qquad \forall \varphi \in \Xs \cap C_c (\Omega).
	\end{dcases}
	\end{equation}
\end{definition}
Note that for a local solution, $Vu \delta^s \in L^1 (\Omega)$ does not seem like a natural part of the definition.
It is clear that \eqref{eq:FDE local} is a weaker concept than \eqref{eq:FDE Brezis} because it lacks the information on the boundary, so it cannot produce uniqueness. But it is a very convenient step into existence.\normalcolor


In spaces with traces, solutions of \eqref{eq:FDE local} with trace $0$ are solutions of \eqref{eq:FDE Brezis}. The following theorem shows what a local solution is missing to become a very weak solution

\begin{theorem} \label{thm:equivalence of formulations}
		Let $V \in L^1_{loc} (\Omega)$ and $f \delta^s \in L^1 (\Omega)$. Any solution $u \in L^1 (\Omega)$ {of \eqref{eq:FDE local} such that $Vu \delta^s \in L^1 (\Omega)$ and ${u}/{\delta^s} \in L^1 (\Omega)$} is a solution of \eqref{eq:FDE Brezis}.
\end{theorem}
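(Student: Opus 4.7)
The plan is to adapt the argument used for Proposition \ref{prop:no potencial:local solution and weighted implies vws} (the zero-potential case) to accommodate the extra term coming from $V$. The key tool is the cut-off approximation of test functions provided by Lemma \ref{lem:approximation of test functions}, together with the three weighted integrability hypotheses $u/\delta^s \in L^1(\Omega)$, $Vu\delta^s \in L^1(\Omega)$, and $f\delta^s \in L^1(\Omega)$, which are each designed to pair (in an $L^1$--$L^\infty$ duality) against one of the weak-$\star$ limits appearing in Lemma \ref{lem:approximation of test functions}.

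More precisely, given an arbitrary test function $\varphi \in \Xs$, I would consider $\varphi_k \defeq \eta_{1/k} \varphi \in \Xs \cap C_c(\Omega)$, where $\eta_\ee$ is the cut-off introduced before Lemma \ref{lem:approximation of test functions}. Since $u$ solves \eqref{eq:FDE local} and $\varphi_k$ is an admissible test function, we have
\begin{equation}
\int_\Omega u (-\Delta)^s \varphi_k + \int_\Omega V u \, \varphi_k = \int_\Omega f \varphi_k.
\end{equation}
The strategy is now to rewrite each of the three integrals as a pairing between a fixed $L^1(\Omega)$ function (that does not depend on $k$) and a bounded sequence converging weakly-$\star$ in $L^\infty(\Omega)$. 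Concretely,
\begin{align*}
\int_\Omega u (-\Delta)^s \varphi_k &= \int_\Omega \tfrac{u}{\delta^s} \cdot \delta^s (-\Delta)^s \varphi_k, \\
\int_\Omega V u \, \varphi_k &= \int_\Omega (V u \delta^s) \cdot \tfrac{\varphi_k}{\delta^s}, \\
\int_\Omega f \varphi_k &= \int_\Omega (f \delta^s) \cdot \tfrac{\varphi_k}{\delta^s}.
\end{align*}
By hypothesis the factors $u/\delta^s$, $Vu\delta^s$, and $f\delta^s$ lie in $L^1(\Omega)$. By Lemma \ref{lem:approximation of test functions}, $\delta^s(-\Delta)^s \varphi_k \rightharpoonup \delta^s (-\Delta)^s \varphi$ and $\varphi_k/\delta^s \rightharpoonup \varphi/\delta^s$ in $L^\infty$-weak-$\star$, so each of the three pairings passes to the limit to give the corresponding integral against $\varphi$.

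Collecting the three limits yields
\begin{equation}
\int_\Omega u (-\Delta)^s \varphi + \int_\Omega V u \, \varphi = \int_\Omega f \varphi \qquad \forall \varphi \in \Xs,
\end{equation}
which, together with the standing assumption $Vu\delta^s \in L^1(\Omega)$, is exactly the very weak formulation \eqref{eq:FDE Brezis}. There is no genuinely hard step: the substantive analytic work was already absorbed into Lemma \ref{lem:approximation of test functions}. The only novelty compared with the potential-free proof is the middle term $\int_\Omega V u \, \varphi_k$, and the assumption $Vu\delta^s \in L^1(\Omega)$ is precisely what is needed to put this term into the same $L^1$--$L^\infty$ framework as the other two. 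Note also that the hypothesis $u/\delta^s \in L^1(\Omega)$ is not redundant in the presence of $V$, because Proposition \ref{prop:nec and suf condition for u over deltas in L1} was stated for the zero-potential problem; in this setting it must be imposed directly.
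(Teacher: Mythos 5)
Your proof is correct, but the paper takes a shorter, more modular route: it sets $g = f - Vu$ and observes that the hypotheses $f\delta^s\in L^1(\Omega)$ and $Vu\delta^s\in L^1(\Omega)$ give $g\in L^1(\Omega,\delta^s)$, while the local formulation \eqref{eq:FDE local} rearranges into the zero-potential local problem \eqref{eq:FDE local no potential} with data $g$. Since $u/\delta^s\in L^1(\Omega)$ by hypothesis, Proposition \ref{prop:no potencial:local solution and weighted implies vws} then applies verbatim, and the result follows in two lines. You instead unfold that proposition's proof from scratch: re-inserting the cut-offs $\eta_{1/k}\varphi$ from Lemma \ref{lem:approximation of test functions}, splitting each of the three terms into an $L^1$ factor paired against an $L^\infty$-weak-$\star$ convergent factor, and passing to the limit term by term. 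Both arguments rest on the same approximation lemma and are equally rigorous. The paper's reduction is tighter and makes clear that the theorem is really a corollary of the $V=0$ case; your direct argument is more verbose but has the virtue of exhibiting explicitly which of the three weighted integrability hypotheses controls which term in the weak formulation. Your closing observation that $u/\delta^s\in L^1(\Omega)$ must be imposed directly and is not inherited from the other hypotheses is correct and worth noting.
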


\begin{proof} \label{proof thm:equivalence of formulations}
	If $Vu \delta^s \in L^1 (\Omega)$, then $g = f - Vu \in L^1 (\Omega, \delta^s)$. Hence, we can apply \Cref{prop:no potencial:local solution and weighted implies vws}.
\end{proof}		

We are ready to state one of the main results of the paper.

\begin{theorem}[Existence theorem] \label{thm:existence}
		Let $V \in L^1_{loc} (\Omega)$ and $f \delta^s \in L^1 (\Omega)$. Then

\noindent {\rm (i)}   There exists a very weak solution of {\eqref{eq:FDE Brezis}}. It satisfies \eqref{eq:u in L1} and \eqref{eq:V u deltas in L1}.

\noindent {\rm (ii)}   If $f \ge 0$, then $u \ge 0$.	
			
\noindent {\rm (iii)}  Furthermore, if $f \varphi_\delta \in L^1 (\Omega)$ then \eqref{eq:u deltas in L1} and \eqref{eq:V u phi delta in L1} hold and, hence,
${u}/{\delta^s} \in L^1 (\Omega)$.

	\noindent {\rm (iv)} Moreover, if $f \in L^2 (\Omega)$, then $u$ is in $H_0^s (\Omega)$.
\end{theorem}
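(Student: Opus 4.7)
The plan is to approximate the potential by bounded truncations $V_k = \min\{V,k\} \in L^\infty(\Omega)$, solve the corresponding Schrödinger problem for each $k$, and pass to the limit via a monotone convergence argument. By linearity and Kato's inequality I may reduce to $f \ge 0$, which also gives (ii). As a preliminary step I would extend \Cref{lem:estimates V and f bounded} to cover the case $V \in L^\infty$ with merely $f \in L^1(\Omega,\delta^s)$: approximating $f$ by bounded truncations $f_j \nearrow f$, the linear estimate \eqref{eq:u in L1} applied to differences shows that the corresponding weak solutions form a Cauchy sequence in $L^1(\Omega)$, and its limit is a nonnegative very weak solution $u_k$ of $(-\Delta)^s u_k + V_k u_k = f$ satisfying \eqref{eq:u in L1}--\eqref{eq:estimate Ls u} uniformly in $k$.

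The monotonicity $u_k \searrow u$ comes from comparison: the difference $w = u_k - u_{k+1}$ satisfies in the very weak sense
\[
(-\Delta)^s w + V_k w = (V_{k+1}-V_k)\,u_{k+1} \ge 0,
\]
and the Kato-type argument of \Cref{thm:uniqueness} forces $w \ge 0$. Consequently $u_k \searrow u \ge 0$ a.e., $u_k \to u$ in $L^1(\Omega)$ by dominated convergence (using $u_k \le u_1 \in L^1$), and \eqref{eq:u in L1} passes to the limit. Since $V_k u_k \to V u$ a.e. with $\int_\Omega V_k u_k\,\delta^s \le C\|f\delta^s\|_{L^1}$ uniformly, Fatou's lemma yields $V u\,\delta^s \in L^1(\Omega)$ and the estimate \eqref{eq:V u deltas in L1}.

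The technically demanding step is passing to the limit in the very weak formulation, where for every $\varphi \in \Xs$ I need to establish
\[
\int_\Omega u_k(-\Delta)^s\varphi + \int_\Omega V_k u_k \,\varphi \longrightarrow \int_\Omega u(-\Delta)^s\varphi + \int_\Omega V u\,\varphi.
\]
The first term converges by dominated convergence since $(-\Delta)^s\varphi \in L^\infty$ and $0\le u_k\le u_1 \in L^1$. For the potential term, Fatou only gives $\liminf_k \int V_k u_k\,\varphi \ge \int Vu\,\varphi$ when $\varphi \ge 0$; the reverse inequality is the main obstacle, since in principle $V_k u_k\,\delta^s$ could develop a singular defect concentrated on the blow-up set of $V$. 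I would handle this by splitting, for $k \ge j$,
\[
\int V_k u_k \,\varphi = \int V_j u_k\,\varphi + \int (V_k-V_j)\, u_k \,\varphi,
\]
observing that the first summand converges to $\int V_j u\,\varphi$ by dominated convergence in $k$ (since $V_j$ is bounded) and then to $\int V u\,\varphi$ by monotone convergence in $j$. Testing the approximate equation against the specific $\varphi_0 \in \Xs$ solving $(-\Delta)^s \varphi_0 = 1$ gives the scalar identity $\int u_k + \int V_k u_k\,\varphi_0 = \int f \varphi_0$, whose monotone passage to the limit (combined with $\int u_k \to \int u$ and $\int V_j u\,\varphi_0 \nearrow \int V u\,\varphi_0$) pins down the total mass and forces the defect to vanish. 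This gives equi-integrability of $\{V_k u_k \delta^s\}$, so Vitali's theorem delivers $V_k u_k\,\delta^s \to V u\,\delta^s$ in $L^1(\Omega)$, and since $|\varphi| \le C\delta^s$ for $\varphi \in \Xs$ the convergence $\int V_k u_k\,\varphi \to \int V u\,\varphi$ follows, establishing (i).

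Parts (iii) and (iv) follow by inheritance at the limit. For (iii), when $f\varphi_\delta \in L^1(\Omega)$ the bounds \eqref{eq:u deltas in L1} and \eqref{eq:V u phi delta in L1} hold at the approximate level by \Cref{lem:estimates V and f bounded} uniformly in $k$ and pass to the limit by Fatou and the convergence established above. For (iv), when $f \in L^2(\Omega)$ the uniform bound \eqref{eq:estimate Ls u} on $\|(-\Delta)^{s/2} u_k\|_{L^2}$ makes $\{u_k\}$ bounded in $H_0^s(\Omega)$, so a weakly convergent subsequence has a limit in $H_0^s(\Omega)$, which by uniqueness of the a.e. limit coincides with $u$.
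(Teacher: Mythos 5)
Your overall architecture (truncate $V$, use monotonicity of $u_k$, pass to the limit) matches the spirit of the paper's proof, and your preliminary extension of \Cref{lem:estimates V and f bounded} to $f\in L^1(\Omega,\delta^s)$ with bounded potential is sound. However, your single-pass limit in $k$ with general $f$ has a genuine gap exactly where you flag it as ``the technically demanding step,'' and the scalar argument with $\varphi_0$ does not close it. Testing against $\varphi_0$ yields $\int_\Omega u_k + \int_\Omega V_k u_k\,\varphi_0 = \int_\Omega f\varphi_0$, and since $\int u_k\to\int u$ you obtain that $L:=\lim_k\int V_k u_k\,\varphi_0$ exists and equals $\int f\varphi_0-\int u$. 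Your split $\int V_k u_k\varphi_0=\int V_j u_k\varphi_0+\int(V_k-V_j)u_k\varphi_0$ (dominated convergence in $k$, then monotone convergence in $j$) produces precisely the Fatou inequality $\int Vu\,\varphi_0\le L$ again; it does not exclude a positive defect $D:=L-\int Vu\,\varphi_0$. Concluding $D=0$ is literally equivalent to the identity $\int u+\int Vu\,\varphi_0=\int f\varphi_0$, i.e.\ to $u$ being a solution tested against $\varphi_0$ --- which is what you are trying to prove. Vitali/Scheff\'e cannot be invoked without first establishing that scalar equality by an independent argument. Note also that the sequence $V_k u_k$ has no monotonicity in $k$ ($V_k\nearrow$ but $u_k\searrow$), and when $f$ is not bounded you lack even a local $L^1$ dominant (the candidate $Vu_1$ is not in $L^1_{\rm loc}$ since $u_1\notin L^\infty$), so dominated convergence fails already for test functions with compact support.

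The paper avoids the defect by ordering the approximations differently. First it fixes $f\in L^\infty$ and truncates $V$: then $u_k$ is uniformly bounded in $L^\infty$, which gives the $L^1_{\rm loc}$ dominant $V u_0\,\delta^s$ needed for the local equation, and the boundary control $u/\delta^s\in L^1$ (available because $f\varphi_\delta\in L^1$ when $f$ is bounded) upgrades the local solution to a very weak one via \Cref{thm:equivalence of formulations}. Second it keeps the full $V$ and truncates $f$ by $f_m=\min\{f,m\}\nearrow f$: now $u_m\nearrow u$ and, crucially, $Vu_m\,\delta^s$ is a \emph{monotone increasing} sequence uniformly bounded in $L^1$ by \eqref{eq:V u deltas in L1}, so monotone convergence gives $Vu_m\,\delta^s\to Vu\,\delta^s$ in $L^1(\Omega)$ with no defect possible. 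This monotonicity of $V u_m$ (absent for $V_k u_k$) is the key ingredient your proposal is missing. To repair your argument you would either have to introduce this second, $f$-truncation (hence the paper's two-parameter scheme) or supply a genuine, non-circular proof of equi-integrability of $\{V_k u_k\,\delta^s\}$.

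A further small inaccuracy: you claim \eqref{eq:u in L1}--\eqref{eq:estimate Ls u} hold ``uniformly in $k$'' for your preliminary $u_k$, but \eqref{eq:u in L2} and \eqref{eq:estimate Ls u} require $f\in L^2(\Omega)$, which you have not assumed at that stage of the argument; they are only relevant for part (iv). Parts (iii) and (iv) of your proposal are otherwise along the right lines once (i) is repaired.
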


\begin{remark}
		This result extends previous results by the two first authors for the classical case ($s=1$) (see \cite{diaz+gc+rakotoson+temam:2018veryweak,diaz+gc+rakotoson2017schrodinger}) to the fractional case. Furthermore, the argument we provide allows us to improve the results for the classical case. In the present text, we have proved that the definition of \emph{very weak solution in the weighted sense} used in previous papers is not necessary as a concept of solution, but rather as a intermediate step.
	\end{remark}


	\begin{proof}[Proof of \Cref{thm:existence}] We proceed in several steps.
		
	(1)	We start by assuming $f \ge 0$ and bounded. Let $V_k = \min\{V,k\}$.
		Let $u_{k}$ be the solution of
		\begin{equation}
		\begin{dcases}
		(-\Delta)^s u_{k} + V_k u_{k} = f & \Omega, \\
		u = 0 & \Omega^c.
		\end{dcases}
		\end{equation}
We know that
		\begin{align}
		\|u_{k}\|_{L^1 (\Omega)} & \le \|f \delta^s \|_{L^1 (\Omega)}\\
		\|V_{k} u_{k} \delta^s \|_{L^1 (\Omega)} & \le \|f \delta^s \|_{L^1 (\Omega)}.
		\end{align}
		
It is easy to prove that for $k_1 < k_2$ we have
		\begin{align}
		0 \le u_{k_2}& \le u_{k_1}.
		\end{align}
Hence, by the Monotone Convergence Theorem we know that there exists $u \in L^1 (\Omega)$ such that
		\begin{equation}
		u_{k} \to u \qquad \textrm{a.e. and in } L^1 (\Omega).
		\end{equation}	
Furthermore,
		\begin{equation}
		\| u_{k } \|_{L^\infty} \le  c \| f \|_{L^\infty}.
		\end{equation}
Hence,
		\begin{equation}
		u_{k} \rightharpoonup u \qquad L^\infty \textrm{-weak-} \star.
		\end{equation}
Let $K \Subset \Omega$ be a compact set. We have that
		\begin{equation} \label{eq:proof theorem existence need V in L1 loc}
		\|V_k u_{k} \|_{L^1 (K) } \le c\|V\|_{L^1 (K)} \|f\|_{L^\infty}.
		\end{equation}
Notice that this is not true if $K$ is replaced by $\Omega$. Also
		\begin{equation}
		0 \le V_k u_{k} \delta^s  \le V u_{k} \delta^s \le V u_{0} \delta^s .
		\end{equation}
By the Dominated Convergence Theorem
		\begin{equation}\label{conv.local}
		V_k u_{k} \delta^s  \to V u \delta^s \qquad L^1(K).
		\end{equation}
We have proved, therefore, that
		\begin{equation}
		\int_ \Omega u (-\Delta)^s \varphi + \int _ \Omega V u \varphi = \int_ \Omega f \varphi, \qquad \forall \varphi \in C_c^\infty (\Omega).
		\end{equation}
		
		This completes the proof of existence of a solution $u$ of \eqref{eq:FDE local} for $f \ge 0$ and bounded.

(2) We improve the result, still keeping   $f$ bounded. Since $0 \le f \varphi_{ \delta } \in L^1 (\Omega)$, then \eqref{eq:u deltas in L1} and \eqref{eq:V u phi delta in L1} hold for $u_{k}$ and $V_k u_{k}$. It is easy to check, applying Fatou's lemma, that the estimates hold for $u$ and $Vu$. In particular, $ u /{\delta^s} \in L^1(\Omega)$ and $Vu\delta^s \in L^1 (\Omega)$. Applying \Cref{thm:equivalence of formulations} we deduce that it is a solution of \eqref{eq:FDE Brezis}. Hence,
			\begin{equation}
			\int_ \Omega u (-\Delta)^s \varphi + \int _ \Omega V u \varphi = \int_ \Omega f \varphi, \qquad \forall \varphi \in \Xs.
\end{equation}
		
(3) Assume now that $0 \le f \in L^1 (\Omega, \delta^s)$. Let $f_m = \min \{ f, m \}$ and let $u_m$ be the solution of
\begin{equation}
		\begin{dcases}
		(-\Delta)^s u_{m} + V u_{m} = f_m & \Omega, \\
		u = 0 & \Omega^c.
		\end{dcases}
		\end{equation}
Since $f_m$ is a pointwise nondecreasing sequence, then $u_m$, $Vu_m$ and $Vu_m \delta^s $ are pointwise nondecreasing sequences. If  $m_1 < m_2$ then
		\begin{align}
		u_{m_1}& \le u_{m_2}.
		\end{align}
The sequence of functions $u_m$ converges in $L^1 (\Omega)$ due to the Monotone Convergence Theorem since it is uniformly bounded above in $L^1$. We have
		\begin{equation}
		\| u \|_{L^1 (\Omega)} = \lim_{m \to \infty } \|u_m\|_{L^1 (\Omega)} \le c  \lim_{m \to \infty } \|f_m   \delta^s \|_{L^1 (\Omega)} =  \|f   \delta^s \|_{L^1 (\Omega)}.
		\end{equation}
 Likewise
		\begin{equation}
		\|V u \delta^s \|_{L^1 (\Omega)} = \lim_{m \to \infty } \|V u_m \delta^s \|_{L^1 (\Omega)} \le c  \lim_{m \to \infty } \|f_m   \delta^s \|_{L^1 (\Omega)} =  \|f   \delta^s \|_{L^1 (\Omega)}.
		\end{equation}
and $V u_m\to V u$ in $L^1(\Omega; \delta^s)$  by monotone convergence.
We can now pass  to the limit in the very weak formulations to show that
		\begin{equation}
		\int_ \Omega u (-\Delta)^s \varphi + \int _ \Omega V u \varphi = \int_ \Omega f \varphi, \qquad \forall \varphi \in C_c^\infty (\Omega).
		\end{equation}

		This proves existence of a very weak solution when $f \ge 0$ and also positivity (ii).

(4)		To prove item (iii) when $f \ge 0$, we assume that $0 \le f \varphi_{ \delta } \in L^1 (\Omega)$. Then \eqref{eq:u deltas in L1} and \eqref{eq:V u phi delta in L1} hold for the sequences $u_{k}, u_m$ and $V_k u_{k}, Vu_{m}$ that appear in steps (1)--(3) of the previous proof. It is easy to check that, in each of the limits, the estimates hold.

	(5) In all the limits, applying \eqref{eq:u in L2} and \eqref{eq:estimate Ls u} we know that $\| u_m \|_{H^s(\mathbb R^n)}, \| u_k \|_{H^s(\mathbb R^n)} \le C \| f \|_{L^2}$, and so it converges weakly in $H^s (\mathbb R^n)$. In particular, the limit $u \in H^s_0 (\Omega)$ and \eqref{eq:u in L2} and \eqref{eq:estimate Ls u} hold.

	(6) In order to prove items (i), (ii) and (iii) when $f$ changes sign, we can split $f = f_1 - f_2$ where $f_i \ge 0$. We apply the previous part of the proof for $f_i$ to construct $u_1$ and $u_2$. We define $u = u_1 - u_2$. This concludes the proof.
\end{proof}

\begin{remark}
		An analogous way to complete step (2) is to realize that $V_k u_k \varphi_ \delta \in L^1 (\Omega)$ with uniform bounds. By splitting the integrals near and far from the boundary, and using the sharp estimates for $\varphi_\delta$, we can check that $V_k u_k \delta^s$ converges in $L^1 (\Omega)$.
	\end{remark}
	
This result can be extended to measures as data, in the space ${\mathcal M}(\Omega, \delta^s)$ by taking limits. See comments on  \Cref{sec.comment}.


\subsection{Accretivity and counterexample}

The  results of the preceding subsections allow to prove the following extension of the results for the operator without potential.

\begin{corollary} \label{cor.accret} The fractional  operator $L_V=(-\Delta)^s+ V$ with $V\ge 0$, $V\in L^1_{loc}(\Omega)$ is $m$-$T$-accretive in the space $L^1(\Omega)$ and also in the spaces $L^1(\Omega; \phi)$ for all positive weights  $\phi\in X^s$, such that $(-\Delta)^s \phi\ge 0$. Moreover, $L_V$ is accretive in all the spaces $L^p(\Omega)$, $1\le p \le \infty$.
\end{corollary}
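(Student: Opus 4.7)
The argument is the one sketched in the paragraph preceding the $V=0$ version of this statement, with obvious modifications to absorb the zeroth-order term. The $m$ part (solvability of the resolvent equation $u+\lambda L_V u=f$) is a direct corollary of \Cref{thm:existence}: the resolvent equation is equivalent to the Schr\"odinger problem $(-\Delta)^s u+\widetilde V u=\widetilde f$ with $\widetilde V:=V+1/\lambda\ge 0$ in $L^1_{loc}(\Omega)$ and $\widetilde f:=f/\lambda\in L^1(\Omega,\delta^s)$, which is exactly the setting of \Cref{thm:existence}; uniqueness is \Cref{thm:uniqueness}.

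For the weighted $T$-accretivity in $L^1(\Omega,\phi)$ with $\phi\in X^s$, $\phi>0$, and $(-\Delta)^s\phi\ge 0$, let $u_i$ solve $u_i+\lambda L_V u_i=f_i$ and set $w:=u_1-u_2$, $g:=f_1-f_2$. The plan is to apply Kato's inequality (\Cref{lem:Kato}) to $w$, viewed as a very weak solution of $(-\Delta)^s w=(g-w)/\lambda-Vw$, together with the identity $\sign_+(w)\,Vw=Vw_+$ (which is where the sign assumption $V\ge 0$ enters), to obtain, for every $0\le\varphi\in X^s$,
\begin{equation*}
\lambda\int_\Omega w_+\,(-\Delta)^s\varphi\,dx+\lambda\int_\Omega Vw_+\varphi\,dx+\int_\Omega w_+\varphi\,dx\le \int_\Omega g_+\varphi\,dx.
\end{equation*}
Plugging $\varphi=\phi$, the first two integrals on the left are nonnegative by the hypotheses, whence $\|w_+\|_{L^1(\Omega,\phi)}\le\|g_+\|_{L^1(\Omega,\phi)}$; replacing $(\cdot)_+$ by $|\cdot|$ in the Kato step gives ordinary accretivity.

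For the unweighted $L^1(\Omega)$ case the natural weight $\phi\equiv 1$ is \emph{not} in $X^s$, so I cannot plug it in. The substitute I will use is $\varphi_\lambda\in X^s$ defined by the $V=0$ resolvent problem $\lambda(-\Delta)^s\varphi_\lambda+\varphi_\lambda=1$ in $\Omega$, $\varphi_\lambda=0$ on $\mathbb{R}^n\setminus\Omega$: Lax--Milgram plus the comparison principle yield $0\le\varphi_\lambda\le 1$, and then $(-\Delta)^s\varphi_\lambda=(1-\varphi_\lambda)/\lambda\in L^\infty(\Omega)$ shows $\varphi_\lambda\in X^s$. Substituting $\varphi=\varphi_\lambda$ in the displayed inequality and using $\lambda(-\Delta)^s\varphi_\lambda+\varphi_\lambda=1$ to combine the first and third integrals into $\int_\Omega w_+\,dx$ will give $\|w_+\|_{L^1(\Omega)}+\lambda\int_\Omega Vw_+\varphi_\lambda\,dx\le\|g_+\|_{L^1(\Omega)}$, which is the desired $T$-accretivity.

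Finally, $L^p$-accretivity with $1<p<\infty$ would follow verbatim from the $V=0$ argument at the end of \Cref{sec.nopot}: test the equation for $w$ against $|w|^{p-2}w$, apply the Stroock--Varopoulos inequality (\Cref{lem:S-V}) to the fractional-Laplacian term, and use $\int_\Omega V|w|^p\,dx\ge 0$ to dispatch the potential term. The case $p=\infty$ follows either by letting $p\to\infty$ in the $L^p$ bound or, more directly, by comparing $w$ with the constant supersolution $M\chi_\Omega$ with $M:=\|g\|_{L^\infty}$, using that $(-\Delta)^s\chi_\Omega\ge 0$ pointwise in $\Omega$. The only technical subtlety I anticipate is the rigorous justification of Kato's and Stroock--Varopoulos' inequalities at the level of very weak solutions when $V\in L^1_{loc}(\Omega)$ is only locally integrable; my plan is to bypass this by first establishing all the estimates for the truncated potentials $V_k:=\min(V,k)$, for which $L_{V_k}$ fits the classical energy framework, and then passing to the limit $k\to\infty$ using the monotone-convergence machinery already worked out in the proof of \Cref{thm:existence}.
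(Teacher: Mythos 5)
Your proposal is correct and follows the same strategy the paper has in mind: the $m$ part reduces to \Cref{thm:existence,thm:uniqueness} via the rewriting $\widetilde V = V + 1/\lambda$, the weighted $L^1$ contractivity comes from Kato's inequality tested against $\phi$, and the $L^p$ ($1<p<\infty$) case uses Stroock--Varopoulos, with everything first established for the truncations $V_k$ and then passed to the limit, exactly as indicated at the end of \Cref{sec.nopot}. One genuine improvement in your write-up is the treatment of the unweighted $L^1(\Omega)$ claim: the paper's sketch tests only against $\phi\in\Xs$, which excludes $\phi\equiv 1$, so it does not literally yield the $L^1(\Omega)$ contraction; your auxiliary test function $\varphi_\lambda$ solving $\lambda(-\Delta)^s\varphi_\lambda+\varphi_\lambda=1$ with $0\le\varphi_\lambda\le 1$ closes this gap cleanly, since $\int_\Omega w_+(\lambda(-\Delta)^s\varphi_\lambda+\varphi_\lambda)=\|w_+\|_{L^1(\Omega)}$ and $\int_\Omega g_+\varphi_\lambda\le\|g_+\|_{L^1(\Omega)}$. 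A small cosmetic remark: the identity $\sign_+(w)\,Vw=Vw_+$ holds for any $V$; the hypothesis $V\ge 0$ is used one step later, to discard the nonnegative term $\lambda\int_\Omega V w_+\varphi$, so the parenthetical attributing the role of the sign assumption to that identity is slightly misplaced but does not affect the argument.
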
\normalcolor

As a negative result for operators with potentials, we want to show that for unbounded potentials $V\ge 0$ the requirement that $f\in L^\infty(\Omega) $ does not imply, in general, that $Vu\in L^\infty(\Omega) $, where $u$ is the solution of $(-\Delta)^su+ Vu=f$.

\noindent {\bf Construction of a counterexample. } (i) We consider a nice bounded, positive and smooth function $f\ge0$ defined in $\Omega$, we may also assume that $f$ has compact support in $\Omega$; we also take a nice bounded potential $V_1\ge0$, and consider the solutions of the Dirichlet problem in $\Omega$
\begin{equation}
(-\Delta)^s u_0 =f,\quad (-\Delta)^s u_1+V_1(x)u_1 =f.
\end{equation}
By the theory of preceding sections, both solutions are bounded and nonnegative in $\Omega$ and
$0\le u_1\le u_0$. Since $V_1u_1$ is bounded the theory says that the solution $u_1$ is also $C^s$ in $\Omega$.

Take any point $x_0\in\Omega$ where $u_1$ is strictly positive $u_1(x_0)=c_0>0$.
By continuity we can take a small ball $B_{r_0}(x_0)\subset \Omega$ where $u_1(x)=c_0/2>0$.

(ii) We now take a perturbation $g(x)=G(|x-x_0|)\ge 0$ which is radially symmetric and decreasing around $x_0$ and is supported maybe in $B_{r_0}(x_0)$. Consider now the solution $u_2\ge 0$ of the Dirichlet problem in $\Omega$
\begin{equation}
(-\Delta)^s u_2+V_2(x)u_2 =f\, \quad V_2(x)=V_1(x)+ g(x).
\end{equation}
We have $0\le u_2(x)\le u_1(x)$ in $\Omega$.

(iii) Let us prove that $u_2(x)$ is uniformly positive if $g\in L^p(\Omega)$ with a small bound. In fact if $u=u_1-u_2$ we have
\begin{equation}
(-\Delta)^s u + V_1(x)u = g(x)u_2\le Cg(x).
\end{equation}
By the known embedding theorems or using the bounds for the Green function, we conclude that when $p$ is large enough, $p>p(s,n)$ we have $u\in C(\Omega)$ and
$$
0\le u(x)\le c_1(p,s,n)\|g\|_p\,.
$$
Therefore, if $\|g\|_p$ is small enough we have $ u(x)\le c_0/4$. Note that $c_0$ was defined before and does not depend on the perturbation $g$. It follows that
$$
u_2(x)=u_1(x)-u(x)\ge c_0/4 \qquad \mbox{in } B_r(x_0).
$$

(iv) We now impose the last requirement, $g(x_0)=+\infty$. Then we have
$$
V_2(x_0)u_2(x_0)=+\infty.
$$
Moreover, we can easily find functions $g\not\in L^q(B_r(x_0))$ with $q>p>p(s,n)$. Therefore, in that case
$$
\| V_2u_2\|_q=+\infty.
$$

\begin{remark} The construction can be generalized to cases with blow-up of $Vu$ at many points; and maybe the requirement $q>p(n,s)$ can be eliminated, so that bound of $Vu$ by means of $f$ in $L^p$ is false for $p>1$.
\end{remark}

\subsection{Solutions with measure data}

As we pointed out in the introduction, there is a simple extension of our existence and uniqueness theory as reflected in Theorems \ref{thm:uniqueness} and \ref{thm:existence} to right-hand side of the equation is a measure $\mu\in {\mathcal M}(\Omega, \delta^s)$. We leave it to the reader to prove that both mentioned theorems
hold in that generality. \normalcolor

	For \eqref{eq:FDE Laplace}, a nice theory can be done, as well, through the Green kernel. Many of the results and techniques in \cite{Marcus+Veron:2013} still hold in this setting.

\section{Super-singular potentials}\label{sec.sspot}

In this section we discuss the influence on the theory of potentials $V\in L^1(\Omega)$ that blow up near the boundary. We are in particular interested in potentials $V \ge C /\delta^{2s}$ that we call {\sl super-singular potentials}. This kind of potentials is very relevant in Physics (see, e.g., \cite{Cooper1995,Pochl1933}). Surprisingly,  potentials with large blow-up on $\partial \Omega$ are very good for the  theory we have described above, as we will show next.

\subsection{Definitions and first results}

We want to address  the question of how super-singular potentials regularize the solutions. 	A  problem with the regularity of solutions of problems involving the fractional Laplacian operator $(-\Delta)^{s}$ is that,  according to \Cref{prop:Ros-Oton} and \Cref{prop:Hopf}, the solutions are typically $u \asymp \delta^s$, which is not of class $\mathcal C^1$ in a neighbourhood in $\Omega$ of $\partial \Omega$. However, for super-singular potentials, solutions such that $u / \delta^{s+\varepsilon} \in L^\infty (\Omega)$ may be found (see \Cref{thm:flatness barrier}.  Hence, we have a higher H\"older exponent at $\partial \Omega$.   In this sense, super-singular potentials force the solution $u$ to be more regular in the proximity of the boundary. A natural definition of the concept of flat solution for $s<1$ is the following
	
\begin{definition}
		We say that $u \in L^1 (\Omega)$ is an $s$-flat solution if, for every $y \in \partial \Omega$
		\begin{equation}
			\lim_{x \to y} \frac{u(x)}{\delta(x)^s} = 0.
		\end{equation}
	\end{definition}
Clearly, a sufficient condition for $s$-flatness is that $u / \delta(x)^{s+\ee} \in L^\infty (\Omega)$ for some $\ee > 0$.

We first obtain a result on existence of flat solutions in a weaker integral sense  that follows directly from our results in previous sections:

\begin{proposition}\label{rem:V super singular u delta s}
		If $V \ge C /\delta^{2s+\varepsilon}$ for some $C>0$, $\varepsilon \ge 0$, then for every $f\in L^1(\Omega; \delta^s)$ we have ${u}/{\delta^{s+\varepsilon}} \in L^1 (\Omega)$, even if  $f \varphi_{ \delta } \notin L^1 (\Omega)$.
\end{proposition}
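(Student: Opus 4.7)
The plan is to bootstrap from the weighted integrability estimate $\|Vu\delta^s\|_{L^1(\Omega)} \le C\|f\delta^s\|_{L^1(\Omega)}$ already provided by Theorem \ref{thm:existence}(i), together with the pointwise lower bound on $V$. Specifically, from the hypothesis $V(x) \ge C\,\delta(x)^{-(2s+\varepsilon)}$ one obtains the purely algebraic inequality
\begin{equation}
\frac{1}{\delta(x)^{s+\varepsilon}} \;\le\; \frac{1}{C}\, V(x)\,\delta(x)^s,
\end{equation}
and consequently
\begin{equation}
\frac{|u(x)|}{\delta(x)^{s+\varepsilon}} \;\le\; \frac{1}{C}\, V(x)\,|u(x)|\,\delta(x)^s.
\end{equation}
Thus the conclusion reduces to verifying that $V|u|\delta^s \in L^1(\Omega)$.

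For nonnegative data, Theorem \ref{thm:existence}(i)--(ii) directly gives $u \ge 0$ together with $Vu\delta^s \in L^1(\Omega)$, so $V|u|\delta^s \in L^1(\Omega)$ is immediate. For sign-changing $f \in L^1(\Omega,\delta^s)$, I would follow the splitting used in step (6) of the proof of Theorem \ref{thm:existence}: write $f = f^+ - f^-$, let $u_1, u_2 \ge 0$ be the very weak solutions associated with $f^+$ and $f^-$, and set $u = u_1 - u_2$. Applying estimate \eqref{eq:V u deltas in L1} to each component gives
\begin{equation}
\|Vu_i \delta^s\|_{L^1(\Omega)} \;\le\; C\,\|f^{\pm}\delta^s\|_{L^1(\Omega)}, \qquad i=1,2,
\end{equation}
so $V|u|\delta^s \le V(u_1+u_2)\delta^s \in L^1(\Omega)$, and integrating the pointwise bound above yields the desired $u/\delta^{s+\varepsilon} \in L^1(\Omega)$.

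There is no genuine obstacle: every ingredient is supplied by the existence theorem, and the content of the statement is simply that the singularity of $V$ acts as a substitute for the stronger data assumption $f\varphi_\delta \in L^1(\Omega)$ needed for Theorem \ref{thm:existence}(iii). The factor $\delta^{-\varepsilon}$ hidden in $V\delta^s$ is exactly what allows us to convert the weighted control $\|Vu\delta^s\|_{L^1}$ into integrability of $u$ against the lighter weight $\delta^{-(s+\varepsilon)}$, explaining the parenthetical remark that the conclusion holds \emph{even} when $f\varphi_\delta \notin L^1(\Omega)$.
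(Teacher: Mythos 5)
Your proof is correct and takes essentially the same approach as the paper's (one-line) argument: combine the estimate $Vu\delta^s \in L^1(\Omega)$ from Theorem \ref{thm:existence}(i) with the pointwise bound $\delta^{-(s+\varepsilon)} \le C^{-1}V\delta^s$. You simply spell out the sign-changing case, which (since $V\ge0$ gives $V|u|=|Vu|$) is already covered by estimate \eqref{eq:V u deltas in L1} as stated.
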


Indeed, we have proved that $Vu \delta^s \in L^1(\Omega)$, so that the lower bound for $V$ implies the conclusion.
		
\begin{remarks} 1) When $V \ge c \delta^{-2s}$ , the equivalence \eqref{eq:equivalence u deltas if V is 0} no longer holds. The sufficiency part still holds, but this result here shows that  the extra condition on $f$ is no longer necessary.

2) 	The integral sense is not a very strong concept of flat solution, but it is nevertheless credited in the literature for $s=1$.
		In that case ($s=1$) super-singular potentials have been shown to ``flatten'' the solution, in the sense that ${\partial u}/{\partial n} = 0$ on $\partial \Omega$. For large powers, higher order derivatives vanish. For further reference, see \cite{Diaz2015ambiguousSchrodinger,Diaz2017ambiguousSchrodinger,diaz2019ambiguous,Orsina2017}.
	\end{remarks}

\subsection{Pointwise flatness estimates of solutions through barrier functions}

We give next conditions for $s$-flatness in the everywhere sense. In order to prove this fact we will construct some clever barrier function (as in \cite{diaz2019ambiguous}). We first need a technical result.

	\begin{lemma}
	Let $\nu_ \beta (x) =  |x|^{\beta }$ with $\beta > 0$. Then
	\begin{equation}
		(-\Delta)^s \nu_\beta = \gamma_ \beta |x|^{-2s} \nu_\beta, \qquad \textrm{ in } \mathbb R^n,
	\end{equation}
	where
	\begin{equation}\label{formula.gamma}
		\gamma_\beta =%
		 2^{2s}\frac{\Gamma\left(\frac{n+\beta}{2}\right) \Gamma\left( s- \frac \beta 2 \right) }%
		{\Gamma\left(-\frac \beta 2 \right) \Gamma\left ( - s + \frac{\beta  + n}{2}\right)}
	\end{equation}
	is a constant.
\end{lemma}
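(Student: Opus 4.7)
The plan is to reduce to a single constant via scaling and rotational symmetry, then compute that constant by a Fourier-transform argument. First I would observe that $\nu_\beta(x) = |x|^\beta$ is radial and positively homogeneous of degree $\beta$. Since $(-\Delta)^s$ commutes with rotations and satisfies the scaling identity $(-\Delta)^s[u(\lambda\,\cdot)](x) = \lambda^{2s} ((-\Delta)^s u)(\lambda x)$, the function $(-\Delta)^s \nu_\beta$ must also be radial and homogeneous of degree $\beta - 2s$. Hence on $\mathbb{R}^n \setminus \{0\}$ there is a constant $C$ such that
\begin{equation}
(-\Delta)^s \nu_\beta(x) = C\, |x|^{\beta - 2s} = C\, |x|^{-2s}\, \nu_\beta(x),
\end{equation}
and the remaining task is to identify $C$ with the $\gamma_\beta$ displayed in \eqref{formula.gamma}.

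To pin down $C$, I would invoke the distributional Fourier transform identity
\begin{equation}
\mathcal{F}[\,|\cdot|^\lambda\,](\xi) = 2^{\lambda+n} \pi^{n/2}\, \frac{\Gamma\!\left(\tfrac{n+\lambda}{2}\right)}{\Gamma\!\left(-\tfrac{\lambda}{2}\right)}\, |\xi|^{-n-\lambda},
\end{equation}
valid as an equality of tempered distributions for $\lambda$ outside the discrete poles of the gamma factors (as recorded, e.g., in \cite{landkof1972foundations,stein2016singular}). Applying this to both sides of $(-\Delta)^s \nu_\beta = C\, \nu_{\beta-2s}$ and using the multiplier identity $\mathcal{F}[(-\Delta)^s u](\xi) = |\xi|^{2s}\, \mathcal{F}[u](\xi)$, the common factor $|\xi|^{2s - n - \beta}$ on both sides cancels, and solving the resulting algebraic equation for $C$ produces exactly the ratio in \eqref{formula.gamma}, since $\Gamma\!\left(-\tfrac{\beta-2s}{2}\right) = \Gamma\!\left(s-\tfrac{\beta}{2}\right)$ and $\tfrac{n+\beta-2s}{2} = -s+\tfrac{\beta+n}{2}$.

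The main obstacle is justifying the Fourier manipulation at the parameter values of interest to the paper: for $\beta > 0$ several of the individual Gamma factors defining $\gamma_\beta$ may be singular, so the identity must be interpreted via analytic continuation in $\beta$ (the ratio remains finite precisely where $\gamma_\beta$ does). An alternative route, which I would adopt if a self-contained computation is preferred, is to evaluate the singular-integral representation \eqref{frac.lap} for $\nu_\beta$ directly at a distinguished point, say $x = e_1$: passing to polar coordinates $y = r\omega$ decouples the principal-value integral into a radial Beta-type integral and a spherical one, both of which evaluate to explicit ratios of Gamma functions. After applying Gamma duplication and reflection identities, the product matches $\gamma_\beta$, and the general $x$ case then follows from the homogeneity and radiality already established in the first step.
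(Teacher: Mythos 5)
Your Fourier-transform route is essentially the one the paper has in mind: the paper gives no proof of its own but refers to \cite{Vazquez2014,Fall2012} and to the Fourier transform of a radial power (Stein--Weiss, Theorem~4.1), which is precisely the distributional identity you invoke, and your algebraic cancellation indeed yields $\gamma_\beta$ as stated. Your homogeneity/radiality reduction and the caveat about analytic continuation past the poles of the Gamma factors are sensible additions, and the alternative direct evaluation of the singular integral at $x=e_1$ is a legitimate independent check but not what the paper uses.
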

The computation of the result above can be found in \cite[p.798]{Vazquez2014}
	and \cite{Fall2012}. It can be obtained by applying the Fourier transform formula of a radial function given in \cite[Theorem 4.1]{stein2016fourier}.
Note that  $\gamma_{s+\ee} <0 $ for $0<\varepsilon  < s$, while $\gamma_{2s}$ diverges.

\begin{lemma}
	Let $0 < \varepsilon  < s$, $0 \le f \in L^\infty$, $V \ge C_V |x-x_0|^{-2s}$ with $ C_V>-\gamma_{s+\varepsilon} >0$,   and let $x_0 \in \partial \Omega$. Then,
	\begin{equation}
		\frac{u(x)}{|x-x_0|^{s+\varepsilon}} \le \frac{\|f \|_{L^\infty}}{(\gamma_{s+\varepsilon} + C_V) } R(x_0)^{s-\varepsilon},
	\end{equation}
	a.e. in $\Omega$, where $R(x_0) = \max_{x \in \bar \Omega} d(x,x_0)$ (i.e. such that $\Omega \subset B_{R}(x_0)$).
\end{lemma}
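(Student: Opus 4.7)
The plan is to build an explicit pointwise barrier of the form $w(x) = K|x-x_0|^{s+\varepsilon}$, show it is a supersolution of the equation, and then invoke a comparison principle against $u$. The constant $K$ will be forced by dimensional balancing and will turn out to be exactly the one appearing in the statement.

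First, using the preceding lemma (with $\beta = s+\varepsilon$, translated to $x_0$), I would compute
\begin{equation*}
(-\Delta)^s w(x) = K\gamma_{s+\varepsilon}\,|x-x_0|^{\varepsilon-s}, \qquad x\in\mathbb{R}^n\setminus\{x_0\}.
\end{equation*}
Combining this with the hypothesis $V(x) \ge C_V|x-x_0|^{-2s}$ gives
\begin{equation*}
(-\Delta)^s w(x) + V(x)w(x) \ge K(\gamma_{s+\varepsilon}+C_V)\,|x-x_0|^{\varepsilon-s}
\end{equation*}
in $\Omega$. The assumption $C_V > -\gamma_{s+\varepsilon}$ (recall $\gamma_{s+\varepsilon}<0$ for $0<\varepsilon<s$) ensures the factor $\gamma_{s+\varepsilon}+C_V$ is strictly positive. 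Since $0<s-\varepsilon$ and $\Omega \subset B_{R(x_0)}(x_0)$, we have $|x-x_0|^{\varepsilon-s} \ge R(x_0)^{\varepsilon-s}$ everywhere in $\Omega$. Choosing
\begin{equation*}
K = \frac{\|f\|_{L^\infty}\, R(x_0)^{s-\varepsilon}}{\gamma_{s+\varepsilon}+C_V}
\end{equation*}
then yields $(-\Delta)^s w + Vw \ge \|f\|_{L^\infty} \ge f$ pointwise in $\Omega$, so $w$ is a (pointwise, hence very weak) supersolution.

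The last step is comparison. Note $w \ge 0$ on all of $\mathbb{R}^n$, so $u = 0 \le w$ on $\Omega^c$. Setting $v = u - w$, we have $v \le 0$ in $\Omega^c$ and, in the very weak sense, $(-\Delta)^s v + V v \le 0$ in $\Omega$. The main obstacle is that $w$ is not zero outside $\Omega$, so the comparison principle \Cref{lem:comparison principle} does not apply verbatim. However, since $v \le 0$ in $\Omega^c$, the exterior contribution in the integro-differential form of $(-\Delta)^s v_+$ has the favorable sign (this is exactly the standard adaptation underlying Kato's inequality for nonzero exterior data). Because $w$ is bounded on $\overline\Omega$, we have $v_+/\delta^s \in L^1(\Omega)$ thanks to \Cref{thm:existence}(iii), so the approximation argument with the test functions $\eta_\varepsilon \varphi$ from \Cref{lem:approximation of test functions} and the proof of \Cref{lem:comparison principle} go through with the sole modification that the Kato inequality is applied to the couple $(u,w)$ rather than to $u$ alone.

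Concluding, $u \le w$ a.e. in $\Omega$, which after dividing by $|x-x_0|^{s+\varepsilon}$ is the stated estimate. The one technical step that will need care, but which I regard as routine, is the extension of the comparison principle to barriers with positive exterior values; everything else is an explicit evaluation using the previous lemma.
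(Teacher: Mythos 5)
Your barrier $w(x)=K|x-x_0|^{s+\varepsilon}$, the choice of $K$, and the use of $\gamma_{s+\varepsilon}+C_V>0$ are exactly the paper's construction; the computation and final division coincide. You correctly flag the one point the paper glosses over, namely that $w$ does not vanish in $\Omega^c$ so \Cref{lem:comparison principle} is not directly applicable; a cleaner patch than adapting Kato to the pair $(u,w)$ is to replace $w$ by the truncation $\tilde w=w\,\chi_\Omega$: since $w\ge0$, one has $(-\Delta)^s\tilde w\ge(-\Delta)^s w$ pointwise in $\Omega$, so $\tilde w$ is still a supersolution, now with zero exterior values, and the comparison principle applies verbatim.
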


\begin{proof}
	Since $0 \le f \in L^\infty$ we have that $0\le u \in L^\infty$.
	
	Let us consider $U (x) = C_U \nu_{s + \varepsilon}(x-x_0)$ where
	\begin{equation}
		C_U = \frac{\|f \|_{L^\infty}}{(\gamma_{s+\varepsilon} + C_V) }R^{s-\varepsilon} > 0.
	\end{equation}	
	We compute
	\begin{align}
		(-\Delta)^s U + V U &= \gamma_{s+\varepsilon} |x-x_0|^{-2s} U + V U\\
		& \ge (\gamma_{s+\varepsilon} + C_V) |x-x_0|^{-2s} U\\
		&= C_U (\gamma_{s+\varepsilon} + C_V) |x-x_0|^{-s+\varepsilon}  \\
		&\ge C_U (\gamma_{s+\varepsilon} + C_V) R^{-s+\varepsilon} \\
		&= \| f \|_{L^\infty}
	\end{align}
	a.e. in $\Omega$, since $-s + \varepsilon < 0$.
	Since also $U \ge 0 = u$ on $\Omega^c$ we have that $U \ge u$ a.e. in $\Omega$. Therefore,
	\begin{equation}
		\frac{u}{|x-x_0|^{s+\varepsilon}} \le \frac{U}{|x-x_0|^{s+\varepsilon}} = C_U.
	\end{equation}
	a.e. in $\Omega$. This completes the proof.
\end{proof}

\begin{theorem} \label{thm:flatness barrier}
	Let $0 < \varepsilon  < s$, $0 \le f \in L^\infty$, $V (x) \ge C_V \delta (x)^{-2s}\ge 0$ with $C_V > - \gamma_{s+\varepsilon}$. Then,
	\begin{equation}
		\frac{u}{\delta^{s+\varepsilon} } \in L^\infty (\Omega).
	\end{equation}
\end{theorem}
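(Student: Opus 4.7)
The strategy is to reduce the statement to the pointwise barrier estimate from the preceding lemma, by choosing the reference point $x_{0}$ optimally for each interior point $x$. Since $0 \le f \in L^{\infty}(\Omega)$ and $V \ge 0$, the comparison principle from \Cref{thm:existence} gives $0 \le u \in L^{\infty}(\Omega)$, so the barrier construction applies.

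The first key step is to verify that, for any prescribed $x_{0} \in \partial\Omega$, the global hypothesis of the preceding lemma is satisfied. This is automatic: since $x_{0} \in \partial\Omega$, we have $\delta(y) \le |y - x_{0}|$ for every $y \in \Omega$, hence
\begin{equation}
V(y) \;\ge\; C_{V}\,\delta(y)^{-2s} \;\ge\; C_{V}\,|y-x_{0}|^{-2s} \qquad\text{for a.e. } y \in \Omega.
\end{equation}
Thus the preceding lemma may be invoked at \emph{every} point $x_{0} \in \partial\Omega$ with the same constant $C_{V}$.

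The second step is to exploit this freedom. For each $x \in \Omega$, choose $x_{0}(x) \in \partial\Omega$ realizing the distance, $|x - x_{0}(x)| = \delta(x)$, which exists because $\partial\Omega$ is compact. Applying the lemma with this choice gives
\begin{equation}
\frac{u(x)}{\delta(x)^{s+\varepsilon}} \;=\; \frac{u(x)}{|x - x_{0}(x)|^{s+\varepsilon}} \;\le\; \frac{\|f\|_{L^{\infty}}}{\gamma_{s+\varepsilon} + C_{V}}\, R(x_{0}(x))^{s-\varepsilon}.
\end{equation}
Since $R(x_{0}) \le \operatorname{diam}(\Omega)$ uniformly in $x_{0} \in \partial\Omega$ and $s - \varepsilon > 0$, taking the supremum over $x \in \Omega$ yields
\begin{equation}
\left\| \frac{u}{\delta^{s+\varepsilon}} \right\|_{L^{\infty}(\Omega)} \;\le\; \frac{\|f\|_{L^{\infty}}}{\gamma_{s+\varepsilon} + C_{V}}\, \operatorname{diam}(\Omega)^{s-\varepsilon},
\end{equation}
which is the desired conclusion.

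The proof is essentially a geometric corollary of the barrier lemma, so no serious obstacle is expected; the only minor points to check are the two above (that the lemma's hypothesis on $V$ is compatible with our hypothesis via the distance monotonicity, and that $R(x_{0}(x))$ is uniformly bounded). Note that the bound is pointwise, so no measurability issue arises from the (possibly non-unique) choice of $x_{0}(x)$.
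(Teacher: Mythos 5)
Your proof is correct and follows essentially the same route as the paper: apply the preceding barrier lemma at every boundary point and optimize over $x_0 \in \partial\Omega$ using $\delta(x) = \min_{x_0 \in \partial\Omega}|x-x_0|$. You make explicit one step the paper leaves implicit (that $\delta(y) \le |y-x_0|$ guarantees $V \ge C_V|y-x_0|^{-2s}$, so the lemma's hypothesis holds for every $x_0$), which is a welcome clarification but not a different argument.
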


\begin{proof}
	Since $\delta(x) = \min_{x_0 \in \partial \Omega} |x-x_0|$ we have that
	\begin{equation}
		\frac{u(x)}{\delta(x)^{s+\varepsilon}} = \max_{x_0 \in \partial \Omega} \frac{u(x)}{|x-x_0|^{s+\varepsilon}} \le \frac{\|f \|_{L^\infty}}{(\gamma_{s+\varepsilon} + C_V) } \max_{x_0 \in \bar \Omega} R(x_0)^{s-\varepsilon}.
	\end{equation}
	This last maximum if finite because $\Omega$ is a bounded set. This completes the proof.
\end{proof}

\begin{remarks}
	1) We have that
	\begin{equation}
		\frac{u(x)}{\delta^s(x)} \le C \delta^\varepsilon(x) \to 0
	\end{equation}
	uniformly as $x \to \partial \Omega$. These functions are \emph{uniformly} $s$-flat.
	
	Notice that this implies the unique continuation property (see, e.g., \cite{Fall2015}) fails for super-singular negative potentials.
	
	2)		Notice that the P\"osch-Teller potential \eqref{Posch-Teller} is $L^1_{loc} (\Omega)$ and behaves like $V \ge c d (x, \partial \Omega)^{-2}$ in any annulus of the form
		\begin{equation}
		\Omega_k = \left \{ x \in \mathbb R^n :  {k\pi} < \alpha |x| \le {k \pi + \frac \pi 2} \right\}.
		\end{equation}	
		or
		\begin{equation}
		\Omega_k = \left \{ x \in \mathbb R^n :  {k\pi} + \frac \pi 2 < \alpha |x| \le {(k+1) \pi} \right\}.
		\end{equation}
	
	3)	The results presented here are part of an ongoing research and  must be improved.
\end{remarks}
\normalcolor

\subsection{Pointwise flatness estimates for radially symmetric data}

There is still the question of getting solutions as flat as desired if $V$ is singular enough near the border. We do not have a general result in that direction, since it needs more tools and space, but we do have a convincing example. It is as follows:

\begin{theorem}\label{thm:point.flat} Let $\Omega=B_R(0)$ be a ball, $f\in L^1({\Omega})$ positive, radially symmetric and decreasing, and  let the potential $V$ be positive, radially symmetric and increasing. \normalcolor Then the solution is nonnegative, radially symmetric and non-increasing. If moreover $V(x)\ge C_V \delta(x)^{-p }$ for some $C_V>0$ and $p>1$, then we have
\begin{equation}\label{form.point.est}
u(x)\le \frac{C_1}{C_V}\|f\|_1\delta(x)^{p-1},
\end{equation}
when $\frac R 2 < |x| \le R$, where $C_1>0$ depends on $n,s$. The same conclusion holds if $f\ge 0 $ is not radially symmetric but $f\le g$ where $g$ is positive, integrable, radially symmetric and decreasing. Then formula {\rm (\ref{form.point.est})} holds if we replace $\|f\|_1$ with $\|g\|_1$.
\end{theorem}

\begin{proof} (i) We assume first  that $f$ is positive, radial and decreasing, hence $f=g$. In that case we already know that $u \ge 0$. The fact that  $u$ is radially symmetric follows from uniqueness and the invariance of the problem under rotations. The fact that $u(r)$ is nonincreasing can be proved by a modification of the Aleksandrov reflection principle proved in \cite{Vazquez2014}, Section 15.
	
To get the estimate we first integrate in $\Omega$ we get
$$
\int_{\Omega}(-\Delta)^s u(x) \,dx  + \int_{\Omega}V(x)u(x)\,dx= \int_{\Omega}f(x)\,dx
$$
It is easy to prove that for $u\ge 0$ there is the inequality $\int_{\Omega}(-\Delta)^s u(x) \,dx\ge 0$. We recall that such an inequality is quite standard in the classical Laplacian case. In order to prove it we may use the formula for the operator and get
$$
\int_{\Omega}(-\Delta)^s u(x) \,dx =\int_{x\in \Omega}\int_{y\not\in \Omega}
\frac{u(x)}{|x-y|^{n+2s}}\,dxdy\ge 0,
$$
since the remaining integral cancels by symmetry:
$$
\int_{x\in \Omega}\int_{y\in \Omega}\frac{u(x)-u(y)}{|x-y|^{n+2s}}\,dxdy= 0.
$$
 We may use all this to conclude that
\begin{equation} \label{int.1}
	C_V\int_B \frac{u(x)}{(R-|x|)^p}\le \int_{B}V(x)u(x) dx\le \|f\|_1.
\end{equation}
%
Note that, in this case, $\delta(x)=R-|x|$ for $x\in \Omega=B_R(0)$. Take now a fixed point $x_0\in {\Omega}$ {such that $\frac R 2 < |x_0| \le R$ (i.e.,} near the border) and consider the annulus $A$ with outer radius $r_2=|x_0|$ and inner radius $r_1=r_2-\delta({x_0}) {=2|x_0|-R}>0$. Then, by monotonicity and radial symmetry
\begin{equation} \label{eq:flatness example 2}
	\int_{A}\frac{u(x)}{(R-|x|)^p}\,dx\ge {\frac{u(x_0)}{(R-r_1)^p}|A|=}  \frac{u(x_0)}{(2\delta(x_0))^p}|A|= c u(x_0)\delta {(x_0)}^{1-p}\,.
\end{equation}
Since $A\subset {\Omega}$, combining \eqref{int.1} and \eqref{eq:flatness example 2} we have
\begin{equation}
	u(x_0)\le \frac{C_1}{C_V}(R-|x_0|)^{p-1}\|f\|_1\,.
\end{equation}
This completes the proof under the stated assumptions.

\noindent (ii) In the case where $f$ is not necessarily radially symmetric but it is bounded above by $g$, we can solve the problem with right-hand side $g$, obtain a solution $v$  that satisfies the desired estimate and then we can use the comparison theorem, $u\le v$. \end{proof}

We may also take a nonradial $V$ that has a supersingular radial lower bound $V_1$ like in the Theorem, and apply again the maximum principle to conclude the same type of bound;  we leave the easy detail to the reader. We also note that the argument can be extended to other equations and more general domains, but since other techniques are needed this extension will not be discussed here.

Though this example does not give optimal rates,  it shows the existence of solutions that are as flat as we like near the boundary, always depending on the divergence of the potential. In that sense,  we have the following interesting consequence.

\begin{corollary} Under the assumptions of \Cref{thm:point.flat}, if moreover  $V$ satisfies
$$
\lim_{\delta(x)\to 0} V(x)\delta(x)^p=+\infty \quad \mbox{for every $p>1$,}
$$
then the solution $u(x)$ vanishes at the boundary to infinite order in the sense that
 \begin{equation}
 \lim_{\delta(x)\to 0}\frac{u(x)}{\delta(x)^p}=0 \quad  \mbox{for every $p>1$.}
 \end{equation}

\end{corollary}


\section{The restricted fractional Laplacian as the natural limit of the Schr\"odinger equation in $\mathbb R^n$ for the super-singular potential} \label{sec:natural limit}

Let us consider the singular infinite well potential (see the exposition in \cite{Diaz2015ambiguousSchrodinger,Diaz2017ambiguousSchrodinger} for $s=1$ and \cite{diaz2019ambiguous} for $0<s<1$)
\begin{equation} \label{eq:infinite well}
	V (x) = \begin{dcases}
		d(x, \partial \Omega)^{-2s}& \Omega, \\
		+\infty & \Omega^c .
	\end{dcases}
	\end{equation}
To avoid the ambiguity of the definition of $Vu$ in $\Omega^c$,  the solutions of the associated Schr\"odinger problem can be understood as the limit of the solutions of the corresponding finite-well potentials
\begin{equation}
	V_k(x) = k \wedge V(x).
\end{equation}
The stationary Schr\"odinger equation over its natural domain, the whole space, corresponds to finding $u_k \in H^s (\mathbb R^n)$ such that
\begin{equation}
	\begin{dcases}
	(-\Delta)^{s} u_k + V_k(x) u_k = f & \mathbb R^n, \\
	u_k \to 0 & |x| \to +\infty,
	\end{dcases}
\end{equation}
for some function $0 \le f \in L^\infty (\Omega)$, $f = 0$ in $\Omega^c$. Here, all the usual formulations are equivalent. Hence $u_k \ge 0$. Furthermore, $0 \le u_k$ is a decreasing sequence, and hence has limit in $L^1 (\Omega)$, $0 \le u \in L^1 (\mathbb R^n)$ which is also an a.e. pointwise limit, due to the Monotone Convergence Theorem.

\begin{theorem}
	Assume \eqref{eq:infinite well}. As $k\to \infty$ the solutions of the approximate problems in $\mathbb R^n$ converge to the solution of Problem \eqref{eq:FDE}. In particular $u = 0$ in $\Omega^c$.
\end{theorem}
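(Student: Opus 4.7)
The plan is to pass to the limit in the very weak formulation of the approximating problems on $\mathbb R^n$ and then invoke the uniqueness result \Cref{thm:uniqueness} to identify the limit with the very weak solution of \eqref{eq:FDE Brezis} furnished by \Cref{thm:existence} applied to the potential $V(x)=\delta(x)^{-2s}\in L^1_{loc}(\Omega)$. Since $V_k\uparrow V$ and each $V_k$ is bounded, the comparison principle gives $0\le u_{k+1}\le u_k\le u_1$ on $\mathbb R^n$, with $u_1\in L^\infty(\mathbb R^n)$ by comparison with the pure Riesz potential of $f$ (recall $f\in L^\infty$ is compactly supported in $\Omega$ and $n>2s$). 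Hence $u_k\downarrow u$ a.e.\ and in $L^1_{loc}(\mathbb R^n)$. To see that $u\equiv 0$ on $\Omega^c$, I would test the weak formulation on $\mathbb R^n$ with $u_k$ itself: since $V_k\equiv k$ on $\Omega^c$, this gives
\[
[u_k]_{H^s(\mathbb R^n)}^2 + k\int_{\Omega^c} u_k^2 \le \int_\Omega f\,u_k \le \|f\|_{L^\infty}\|u_1\|_{L^\infty}|\Omega|,
\]
so $\|u_k\|_{L^2(\Omega^c)}^2\le C/k\to 0$, and combined with the monotone convergence already noted this forces $u=0$ a.e.\ on $\Omega^c$.

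Fix now $\varphi\in\Xs$, extended by $0$ on $\Omega^c$. The weak formulation for $u_k$ on $\mathbb R^n$, rewritten via \Cref{prop:fractional integration by parts}, reads
\[
\int_\Omega u_k(-\Delta)^s\varphi + \int_{\Omega^c} u_k(-\Delta)^s\varphi + \int_\Omega V_k u_k\varphi = \int_\Omega f\varphi.
\]
The first integral passes to the limit by dominated convergence, since $u_k\le u_1\in L^\infty$ and $(-\Delta)^s\varphi\in L^\infty(\Omega)$. The second vanishes by Cauchy--Schwarz, since $\|u_k\|_{L^2(\Omega^c)}\to 0$ while $(-\Delta)^s\varphi$ is square-integrable on $\Omega^c$: it decays like $|x|^{-n-2s}$ at infinity and, thanks to the estimate $|\varphi|\le C\delta^s$ coming from \Cref{prop:Ros-Oton}, is controlled near $\partial\Omega$ from outside. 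The decisive observation concerns the potential term: using again $|\varphi|\le C\delta^s$ together with $u_k\le u_1\in L^\infty$, one has the uniform dominating bound
\[
V_k u_k |\varphi| \;\le\; V u_1 |\varphi| \;\le\; C\,\delta^{-2s}\cdot\delta^s \;=\; C\,\delta^{-s},
\]
which is integrable on $\Omega$ because $s<1$. Since $V_k u_k\to V u$ a.e.\ on $\Omega$, dominated convergence yields $\int_\Omega V_k u_k\varphi\to\int_\Omega V u\,\varphi$.

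Collecting these three limits produces the identity
\[
\int_\Omega u(-\Delta)^s\varphi + \int_\Omega V u\,\varphi = \int_\Omega f\varphi,\qquad \forall\,\varphi\in\Xs,
\]
with $u\in L^1(\Omega)$, $u\equiv 0$ on $\Omega^c$, and $V u\,\delta^s\in L^1(\Omega)$ by Fatou applied to the uniform bound $\int V_k u_k\delta^s\le C\|f\delta^s\|_{L^1}$. Thus $u$ is a very weak solution of \eqref{eq:FDE Brezis} with potential $V$, and \Cref{thm:uniqueness} identifies it with the solution obtained in \Cref{thm:existence}. The main obstacle is the passage to the limit in the potential term $\int V_k u_k\varphi$: it rests on the essentially sharp observation that the product of the singular potential $V\sim\delta^{-2s}$ and the boundary behaviour $|\varphi|\lesssim\delta^s$ of admissible test functions leaves a factor $\delta^{-s}$, which just barely remains in $L^1(\Omega)$ for $s\in(0,1)$—this is precisely where the integrability of $\delta^{-s}$ near $\partial\Omega$ is used to trigger dominated convergence.
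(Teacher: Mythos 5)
Your overall strategy is sound and, pleasantly, differs from the paper's in two places; but one step contains a genuine gap.

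\textbf{What you do differently.} To show $u\equiv 0$ on $\Omega^c$, the paper tests against the solution $\varphi_0$ of $(-\Delta)^s\varphi_0=1$ in $\mathbb R^n$ and reads off $(1+k\min_{\Omega^c}\varphi_0)\int_{\Omega^c}u_k\le\int_\Omega f\varphi_0$; you instead test the weak formulation against $u_k$ and obtain $k\|u_k\|^2_{L^2(\Omega^c)}\le C$. Both are correct, and your energy argument is arguably cleaner. A second difference: the paper only passes to the limit for $\varphi\in C_c^\infty(\Omega)$, concluding with the local very weak formulation on $\Xs\cap C_c(\Omega)$, and leaves the identification with the unique solution of \eqref{eq:FDE Brezis} implicit (via \Cref{thm:equivalence of formulations} and \Cref{thm:uniqueness}). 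You take full $\varphi\in\Xs$ directly; the dominated-convergence bound $V_ku_k|\varphi|\le Vu_1|\varphi|\le C\delta^{-s}\in L^1(\Omega)$ for the potential term is a neat observation that handles the boundary singularity in one stroke. These are genuinely useful variations.

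\textbf{The gap.} Your treatment of $\int_{\Omega^c}u_k(-\Delta)^s\varphi$ hinges on the claim that $(-\Delta)^s\varphi\in L^2(\Omega^c)$. This is false for $s\ge 1/2$. For $x\in\Omega^c$ with $d(x)=\dist(x,\Omega)$ small, the bound $|\varphi|\le C\delta^s$ gives
\begin{equation*}
|(-\Delta)^s\varphi(x)|\;\lesssim\;\int_\Omega\frac{\delta(y)^s}{|x-y|^{n+2s}}\,dy\;\sim\;d(x)^{-s},
\end{equation*}
so $\int_{\Omega^c\cap\{d<1\}}|(-\Delta)^s\varphi|^2\,dx\sim\int_0^1 t^{-2s}\,dt$ diverges whenever $2s\ge 1$. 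Thus the Cauchy--Schwarz step fails for $s\in[1/2,1)$. The fix is easy given what you already have: $(-\Delta)^s\varphi\in L^1(\Omega^c)$ (since $\int_0^1 t^{-s}\,dt<\infty$ and $|x|^{-n-2s}$ is integrable at infinity), and you already know $u_k\le u_1\in L^\infty$ and $u_k\to 0$ a.e.\ on $\Omega^c$ (combine the $L^2$ decay with monotonicity), so dominated convergence gives $\int_{\Omega^c}u_k(-\Delta)^s\varphi\to 0$ without any $L^2$ input. Alternatively, restrict to $\varphi\in C_c^\infty(\Omega)$ as the paper does, for which $(-\Delta)^s\varphi$ is bounded everywhere, and extend by the density argument afterwards.
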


\begin{proof}
	Using the solution of
	\begin{equation}
		\begin{dcases}
			(-\Delta)^s \varphi_0 = 1 & \mathbb R^n , \\
			\varphi \to 0 & |x| \to \infty
		\end{dcases}
	\end{equation}
	we deduce that
	\begin{equation}
		(1 + k \min_{\Omega_c} \varphi_0) \int_{ \Omega^c } u_k \le \int_ {\Omega} f \varphi_0
	\end{equation}
	Hence $u = 0$ in $\Omega^c$.
	
	On the other hand,
	\begin{equation}
		\int_{\mathbb R^n} u_k (-\Delta)^s \varphi + \int_ {\mathbb R^n} V_k u_k \varphi = \int_{\Omega} f \varphi.
	\end{equation}
	As before, for $K \subset \Omega$ compact $V_k u_k \to Vu$ in $L^1 (K)$ by the Dominated Convergence Theorem.\\
	Finally, for any $\varphi \in \mathcal C_c^\infty(\Omega)$ such that $(-\Delta)^s \varphi \in L^\infty (\mathbb R^n)$, we pass to the limit to obtain
	\begin{equation}
		\int_{\mathbb R^n} u (-\Delta)^s \varphi + \int_ {\mathbb R^n} V u \varphi = \int_{\Omega} f \varphi.
	\end{equation}
	For $\varphi$ the restricted fractional Laplacian and the fractional Laplacian in  $\mathbb R^n$ coincide.
	
	Since $u = 0$ in $\Omega_c$ and $\varphi$ is supported in $\Omega$, this is precisely
	\begin{equation}
	\int_{\Omega} u (-\Delta)^s \varphi + \int_ {\Omega} V u \varphi = \int_{\Omega} f \varphi.
	\end{equation}
	By density, we have the previous formulation for all $\varphi \in \Xs \cap C_c (\Omega)$.
\end{proof}

This shows that the natural fractional Laplacian to deal with the Schr\"odinger equation with the singular infinite-well potential problem is the restricted fractional Laplacian over $\Omega$. We point out that, physically, the Schr\"odinger equation a priori must be defined over the whole space, $\mathbb R^n$, and that any other constraint (as, for instance, to assume a localization to a subset $\Omega$) must be justified.

\section{Another perspective on the results}\label{sec.persp}

\subsection{The Green operator's viewpoint} \label{sec:Green operator viewpoint}


Let us start for the case $V = 0$. As mentioned, for regular $f$ we know that the unique solution of
\begin{equation}
\begin{dcases}
	(-\Delta)^s  u = f & \Omega, \\
	u = 0 & \Omega^c,
\end{dcases}
\end{equation}
is written in the form
\begin{equation}
	u(x) = \int_ {\Omega} \mathbb G_s (x,y) f (y) dy
\end{equation}
where $\mathbb G_s$ satisfies \eqref{eq:Green function estimates}. In \Cref{sec:L1 weighted optimal class} we have shown that the optimal set of data functions for the Green kernel is given by:
\begin{subequations}
\begin{eqnarray}
	\Gs: \Dom(\Gs) = L^1 (\Omega, \delta^s) &\longrightarrow&  L^1_0 (\Omega) \\
	f & \longmapsto&  \chi_ \Omega (\cdot) \int_{ \Omega } \mathbb G_s (\cdot, y) f(y) dy,
\end{eqnarray}
\end{subequations}
where $\chi_\Omega$ is the characteristic function of $\Omega$,  and
\begin{equation}
	L_0^1 (\Omega) = \{  u \in L^1 (\mathbb R^n): u = 0 \textrm{ in } \Omega^c \}.
\end{equation}

In this sense we can characterize
\begin{equation}
	\Xs = \Gs (L^\infty (\Omega)).
\end{equation}
For this, let us read the very weak formulation of \eqref{eq:FDE Laplace} in terms of $\Gs$. The very weak formulation \eqref{eq:FDE Brezis} reduces to
\begin{equation} \label{eq:vwf V 0}
	\int_{ \Omega } u ( -\Delta) ^s \varphi = \int_ {\Omega} f \varphi , \qquad \forall \varphi \in \Xs.
\end{equation}
First, for $f \in L^\infty (\Omega)$, we point out that, as the unique solution is $u = \Gs (f)$, we can write
\begin{equation}
\int_{ \Omega } \Gs(f) ( -\Delta) ^s \varphi = \int_ {\Omega} f \varphi , \qquad \forall \varphi \in \Xs.
\end{equation}
Since,  $\Xs = \Gs(L^\infty)$, we can write $\varphi = \Gs (\psi)$ for some $\psi \in L^\infty (\Omega)$, and so $(-\Delta)^s \varphi = \psi$. Therefore, \eqref{eq:vwf V 0} is equivalent to
\begin{equation} \label{eq:Gs self-adjoint}
	\int_{ \Omega } \Gs(f) \psi = \int_ {\Omega} f \Gs (\psi), \qquad \forall \psi \in L^\infty(\Omega).
\end{equation}
Thus, the very weak formulation for $f \in L^\infty$ is equivalent the fact that $\Gs$ is self-adjoint.

The following result gives a direct answer:
\begin{proposition} \label{eq:kernel solution is vws}
	Let $L^\infty (\Omega) \subset Y \subset \Dom (\Gs)$ be such that
	\begin{equation}
	\Gs: Y \to L^1 ( \Omega) \textrm{ is continuous}
	\end{equation}
	and assume that $L^\infty(\Omega)$ is dense $Y$. Then $\Gs(f)$ is a very weak solution of \eqref{eq:FDE Laplace}.
\end{proposition}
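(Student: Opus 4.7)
The plan is to establish, for every $f \in Y$ and every test function $\varphi \in \Xs$, the very weak identity
\begin{equation*}
\int_\Omega \Gs(f)\,(-\Delta)^s \varphi \, dx = \int_\Omega f \varphi \, dx,
\end{equation*}
together with the membership $\Gs(f) \in L^1(\Omega)$ and $\Gs(f) = 0$ on $\Omega^c$. The latter two are immediate from the definition of $\Gs$ as an operator into $L^1_0(\Omega)$.

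I would first dispatch the case $f \in L^\infty(\Omega)$. In this regime $u = \Gs(f)$ is the weak solution of \eqref{eq:FDE Laplace} produced by Lax--Milgram, and by Remark~\ref{rem:to the definition of weak sol of P0}.1 any such weak solution is also a very weak solution. As already noted in the paragraph preceding the statement, the identity \eqref{eq:vwf V 0} for $L^\infty$ data is nothing but the self-adjointness property \eqref{eq:Gs self-adjoint} of the Green operator.

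I would then extend by density. Given $f \in Y$, select a sequence $f_k \in L^\infty(\Omega)$ with $f_k \to f$ in $Y$. Fix $\varphi \in \Xs$. By Step~1,
\begin{equation*}
\int_\Omega \Gs(f_k)\,(-\Delta)^s \varphi \, dx = \int_\Omega f_k \varphi \, dx.
\end{equation*}
The left-hand side converges to $\int_\Omega \Gs(f)(-\Delta)^s\varphi\,dx$ because $\Gs(f_k) \to \Gs(f)$ in $L^1(\Omega)$ by the assumed continuity of $\Gs$, and $(-\Delta)^s\varphi \in L^\infty(\Omega)$ by definition of $\Xs$. For the right-hand side, the key observation is that every $\varphi \in \Xs$ satisfies the pointwise bound $|\varphi(x)| \le C_\varphi\, \delta(x)^s$: since $\varphi \in \mathcal C^s(\mathbb R^n)$ vanishes on $\Omega^c$, comparing $\varphi(x)$ with its value at a nearest boundary point of $\Omega$ yields $|\varphi(x)| \le [\varphi]_{\mathcal C^s}\, \delta(x)^s$. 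Consequently
\begin{equation*}
\left|\int_\Omega (f_k - f)\,\varphi\,dx\right| \le C_\varphi\,\|f_k - f\|_{L^1(\Omega,\delta^s)},
\end{equation*}
which tends to zero because $Y \hookrightarrow \Dom(\Gs) = L^1(\Omega,\delta^s)$ continuously (this is the natural ambient topology that makes the density hypothesis meaningful). Passing to the limit closes the identity.

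The only delicate point — and thus the main obstacle — is the compatibility between the topology on $Y$ and the weighted norm $L^1(\Omega,\delta^s)$: one needs convergence in $Y$ to entail convergence in the weighted space, so that the right-hand side passes to the limit against $\varphi \in \Xs$. This is a harmless assumption in practice, but must be invoked explicitly. Everything else is essentially self-adjointness of $\Gs$ plus a continuity argument.
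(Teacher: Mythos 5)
Your proof is correct and follows essentially the paper's own argument: verify the very weak identity for bounded data (the paper phrases this through the equivalent self-adjointness property $\int_\Omega \Gs(f)\psi = \int_\Omega f\,\Gs(\psi)$), then pass to the limit using density of $L^\infty(\Omega)$ in $Y$ and the continuity of $\Gs: Y \to L^1(\Omega)$ for the left-hand side. Your explicit flagging of the right-hand side limit --- that one tacitly needs convergence in $Y$ to entail convergence in $L^1(\Omega,\delta^s)$, so that the pointwise bound $|\varphi|\le C_\varphi\delta^s$ for $\varphi\in\Xs$ can be exploited --- is a worthwhile clarification of a point the paper leaves implicit.
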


\begin{proof}
	Let $f \in Y$ and $f_k \in L^\infty( \Omega)$ be a sequence converging to $f$ in $Y$. Then $\Gs(f_k)\to \Gs(f)$ in $L^1 (\Omega)$. On the other hand $\Gs(f_k)$ is a very weak solution of  \eqref{eq:FDE Laplace}. By passing to the limit in \eqref{eq:Gs self-adjoint}, we deduce that $\Gs(f)$ also satisfies \eqref{eq:Gs self-adjoint}, and so it is a very weak solution.
\end{proof}

It was shown in \cite{Ros-Oton2014} $\Gs : L^\infty( \Omega) \to \mathcal C^s (\Omega)$ is continuous. In \cite{chen+veron2014} the authors showed that $\Gs: L^1 (\Omega, \delta^s) \to L^1 (\Omega)$ is also continuous. We have shown here that $\Gs : L^1 (\Omega, \varphi_{\delta}) \to L^1 (\Omega, \delta^{-s})$ is also continuous. Furthermore,
\begin{equation}
	\Gs^{-1} \Big(  L^1 (\Omega, \delta^{-s}) \cap \mathrm{Im}(\Gs) \Big) = L^1 (\Omega, \varphi_{\delta}).
\end{equation}

Problem \eqref{eq:FDE} with $V \ne 0$ is also linear, and could allow for another Green kernel. However, we can write \eqref{eq:FDE} as a fixed point problem for the Green operator of $(-\Delta)^s$ as:
\begin{equation}
	u = \Gs ( f - Vu) .
\end{equation}
Let $u$ be a solution, and let $g = f - Vu$. In \Cref{lem:estimates V and f bounded} we show that, if $V, f \in L^\infty$, then
\begin{subequations}
\begin{align}
	\| g \delta^s \|_{L^1}&\le C \| f \delta^s \|_{L^1} \\
	\| g \varphi_{ \delta } \|_{L^1}&\le 2 \| f \varphi_{ \delta } \|_{L^1}.
\end{align}
\end{subequations}
The results in Section 4 of this paper lead to corresponding properties of the Green operator for \eqref{eq:FDE}
\begin{equation}
	\GsV: f \mapsto u.
\end{equation}
So far, we have proved that:
\begin{enumerate}
	\item If $V \in L^1_{loc}$ then {$\GsV : L^1 (\Omega, \delta^s) \to L^1 (\Omega)$ and} $\GsV: L^1 (\Omega, \varphi_{\delta}) \to L^1 (\Omega, \delta^{-s})$.
	\item If $V \ge \delta^{-2s}$ then
	$\GsV: L^1 (\Omega, \delta^s) \to L^1 (\Omega, \delta^{-s}).$
\end{enumerate}
It is easy to show that
\begin{equation}
	\GsV (x,y) \le \Gs (x,y) \qquad \forall x, y \in \Omega.
\end{equation}
For $V \in L^\infty$ it is likely that
\begin{equation}
	\GsV (x,y) \asymp \Gs (x,y).
\end{equation}
However, the additional integrability for the case $V \ge c \delta^{-2s}$ guaranties that
\begin{equation}
\GsV (x,y) \not \asymp \Gs (x,y).
\end{equation}

\subsection{What is $(-\Delta)^s$ of a very weak solutions?}
Let us think about $(-\Delta)^s$ as a functional operator. It is natural to define
\begin{subequations}
\begin{eqnarray}
	\Ls :\Dom(\Ls) \subset L_0^0 (\Omega) & \longrightarrow & L^0 (\Omega) \\
	u & \longmapsto & c_{n,s} P.V. \int_{\mathbb R^n} \frac{u(\cdot) - u(y)}{|\cdot-y|^{n+2s}} dy, \label{eq:Ls mapsto}
\end{eqnarray}
\end{subequations}
where $L^0 (\Omega)$ is the set of measurable functions in $\Omega$ and
\begin{equation}
	L_0^0 (\Omega) = \{ u \in L^0 (\mathbb R^n) : u = 0 \textrm{ in } \Omega^c  \}.
\end{equation}
It is easy to show that
\begin{equation}
	\Ls : \mathcal C^{2s}_0 (\overline \Omega)  \longrightarrow  \mathcal C^s( \bar \Omega) \\
\end{equation}
where
\begin{equation}
	\mathcal C^{2s}_0 (\overline \Omega) = \{  u \in \mathcal C^{2s} (\mathbb R^n): u = 0 \textrm{ in } \Omega^c  \}.
\end{equation}
However, working with integrable rather than smooth functions $f$, we do not expect $u \in \mathcal C^{2s}_0 (\overline \Omega)$. Nonetheless, our aim is to solve the problem \eqref{eq:FDE}, so we are interested in the definition of $(-\Delta)^s u$.

By the regularization results obtained through H\"ormander theory in \cite{Grubb2015,Ros-Oton+Serra2016Duke}, we have that
\begin{equation}
	\Gs: \mathcal C^{\gamma} (\bar \Omega) \to \mathcal C^{\gamma + s} (\bar \Omega, \delta^{-s}),
\end{equation}
if $\gamma + s \notin \mathbb N$. We point that $\mathcal C^{\gamma + s} (\bar \Omega, \delta^{-s}) \subset \mathcal C^{\gamma + s} (\Omega) \cap \mathcal C_0 (\bar \Omega)$.
By uniqueness of solutions \eqref{eq:FDE Laplace} it is clear that
\begin{align}
	u & = \Gs \Ls u \qquad u \in \mathcal C^{2s} (\bar \Omega, \delta^{-s}),\\
	f &= \Ls \Gs f \qquad f \in \mathcal C^{s} (\bar \Omega). \label{eq:f to f Cs}
\end{align}
We can extend this result to an abstract setting.
In this direction we have:
\begin{proposition}
	Let $X \subset \Dom (\Ls)$ and $\mathcal C^{s} (\bar \Omega) \subset Y \subset \Dom (\Gs)$. Assume that $\Gs:Y \to X$ and $\Ls : \Gs(Y)  \to Y$ are continuous and that $\mathcal C^{s} (\bar \Omega)$ is dense in $Y$. Then
	\begin{equation} \label{eq:f to f}
		f = \Ls \Gs f \textrm{ in } Y.
	\end{equation}
\end{proposition}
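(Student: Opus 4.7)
The plan is a straightforward density-plus-continuity argument, bootstrapping from the identity \eqref{eq:f to f Cs} that already holds on the dense subspace $\mathcal C^s(\bar\Omega)$. Concretely, given $f \in Y$, I would invoke the density hypothesis to select a sequence $f_k \in \mathcal C^s(\bar\Omega)$ with $f_k \to f$ in $Y$, and then apply \eqref{eq:f to f Cs} termwise to obtain $\Ls \Gs f_k = f_k$ in $Y$ for every $k$.

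Passing to the limit uses the two continuity hypotheses in succession. Continuity of $\Gs : Y \to X$ yields $\Gs f_k \to \Gs f$ in $X$; since $\Gs f_k, \Gs f \in \Gs(Y)$, this convergence also holds in the topology that $\Gs(Y)$ inherits from $X$. Continuity of $\Ls : \Gs(Y) \to Y$ then gives $\Ls \Gs f_k \to \Ls \Gs f$ in $Y$. On the other hand, the left-hand side equals $f_k$, which by the choice of the approximating sequence converges to $f$ in $Y$. Uniqueness of limits in the topology of $Y$ forces $\Ls \Gs f = f$, which is the desired identity \eqref{eq:f to f}.

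I expect the only subtlety worth flagging is making sure that the topology on $\Gs(Y)$ implicit in the continuity hypothesis for $\Ls$ is compatible with the convergence produced by $\Gs : Y \to X$. If $\Gs(Y)$ carries the subspace topology induced from $X$ this is automatic; otherwise it reduces to a routine compatibility check between the two norms. No further real obstacle arises, since the pointwise identity \eqref{eq:f to f Cs} on $\mathcal C^s(\bar\Omega)$ does all the substantive work, and the rest of the argument is pure soft analysis.
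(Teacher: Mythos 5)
Your argument is correct and matches the paper's proof essentially verbatim: pick $f_k \in \mathcal C^s(\bar\Omega)$ with $f_k \to f$ in $Y$, apply the identity $\Ls\Gs f_k = f_k$, and pass to the limit using the two continuity hypotheses in succession. If anything, your write-up is slightly more careful than the paper's two-line proof, which only explicitly invokes continuity of $\Gs$ and leaves the use of continuity of $\Ls:\Gs(Y)\to Y$ (and the compatibility of topologies on $\Gs(Y)$) implicit in the final step.
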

\begin{proof}
	Let $f_k \in \mathcal C^{s} (\bar \Omega)$ be a sequence such that $f_k \to f$ in $Y$. Then $\Gs f_k \to \Gs f$ in $X$. On the other hand, from \eqref{eq:f to f Cs} we know that
	$
		f_k = \Ls \Gs f_k.
	$
	Therefore $f = \Ls \Gs$.
\end{proof}

If we get inspiration in the case of usual Laplacian we soon see that this pointwise construction, although natural, is not optimal working grounds. By looking again at the case of the usual Laplacian, we would like to study a distributional formulation. By \Cref{prop:fractional integration by parts} we have that $\Ls|_{\mathcal C^{2s} (\bar \Omega)}$ is self-adjoint. We can define a self-adjoint extension as a distributional operator
\begin{eqnarray}
	\widetilde \Ls : L^1 (\Omega) \to \mathcal D' (\Omega)
\end{eqnarray}
through the notion of very weak solution, i.e.
\begin{subequations}
	\begin{eqnarray}
	\widetilde \Ls u : \mathcal C_c^\infty (\Omega) & \longrightarrow& \mathbb R \\
	\varphi  &\mapsto& \int_{ \Omega } u (\Ls \varphi).
	\end{eqnarray}
\end{subequations}
Through \Cref{prop:fractional integration by parts} we know that, for $u \in \mathcal C^{2s}_0 (\overline \Omega)$,
\begin{equation}
	\langle \widetilde \Ls u, \varphi \rangle = \int_{ \Omega}( \Ls u) \varphi
\end{equation}
for all $\varphi \in \mathcal C_c^\infty (\Omega)$, i.e. $\widetilde \Ls u$ has a Riesz representation as a pointwise function (see, e.g., \cite{stein2016singular,stein2016fourier}). In this sense, we can ensure that any very weak solution of \eqref{eq:FDE Laplace} satisfies
\begin{equation}
	\widetilde \Ls u = f \qquad \textrm{ in } \mathcal D' (\Omega).
\end{equation}
In fact, this distributional extension is precisely the one that comes naturally from the very weak solutions used in this paper.


\section{Auxiliary result. Boundary behaviour of $\varphi_{ \delta }$} \label{sec:phi delta estimates}

\begin{proof}[Proof of \Cref{lem:phi delta estimates}] \label{proof lem:phi delta estimates}
	We know that
	\begin{equation} \label{eq:varphi delta kernel}
		\varphi_{ \delta } (x) = \int_{ \Omega } \frac{\mathbb G_s (x,y)}{\delta^s(y)} dy.
	\end{equation}
	Due to \eqref{eq:Green function estimates}
	\begin{equation}
		\mathbb G_s (x,y) \le \frac{c}{|x-y|^{n-2s}} \min \left( \frac{\delta^s(x)}{|x-y|^s},1 \right).
	\end{equation}
	To estimate the behaviour of $\varphi_{ \delta }$ near the boundary we take a point $x$ near $\partial \Omega$ and consider the integral in a small ball $B$ with center $x$ and radius $\frac{\delta(x)}{2}$. We split $\varphi_{ \delta } (x) = I_1 + I_2$ by splitting the integral \eqref{eq:varphi delta kernel} into integrals in $B$ and $\Omega\setminus B$. We have
	\begin{equation}
		 I_1 \defeq \int_{ B } \frac{\mathbb G_s (x,y)}{\delta^s(y)} dy \le \int_{ B } \frac{c}{|x-y|^{n-2s}\delta^s(y)} dy.
	\end{equation}
	On the other hand, in $B$, $\delta(y) \ge \delta - \frac{\delta(x)}{2} \ge c \delta(x)$ and hence
	\begin{equation}
		I_1 \le \frac{c}{\delta^s(x)} \int_{ B }  \frac{1}{|x-y|^{n-2s}} dy.
	\end{equation}
	Integrating in spherical coordinates
	\begin{equation}
		I_1 \le \frac{c}{\delta^s(x)} \int_{ 0 }^{\frac{\delta(x)}{2}}  \frac{1}{r^{n-2s}} r^{n-1}dr =  \frac{c}{\delta^s(x)} r^{2s} \Big|_{ 0 }^{\frac{\delta(x)}{2}} \le c \delta^s(x).
	\end{equation}
	On the other hand we have that
	\begin{align}
		I_2 &\defeq \int_{ \Omega \setminus B } \frac{\mathbb G_s(x,y)}{\delta(y)^s} dy \\
		 &\le c \int_{|x-y|\ge \frac{\delta(x)}{2}} \frac{1}{|x-y|^{n-2s} \delta(y)^s} \frac{\delta(x)^s}{|x-y|^{s}} dy \\
		 &= c \delta^s (x) \int_{|x-y|\ge \frac{\delta(x)}{2}} \frac{1}{|x-y|^{n-s} \delta(y)^s}  dy \\
		 &\le c \delta^s (x) \int_{|x-y|\ge \frac{\delta(x)}{2}} \frac{1}{|x-y|^{n}}  dy.
	\end{align}
	Let $R = \max_{y \in \Omega} |x-y|$. We can integrate radially to compute
	\begin{align}
		 \int_{|x-y|\ge \frac{\delta(x)}{2}} \frac{1}{|x-y|^{n}}  dy &\le c \int_{\frac{\delta(x)}{2}}^R \frac{1}{r^n} r^{n-1} dr \\
		 &=  c \left(  \log R - \log \frac{\delta(x)}{2}  \right)\\
		 &\le c (1 + |\log \delta(x)|).
	\end{align}
	Thus
	\begin{equation}
		I_2 \le c \delta^s(x) ( 1 + |\log \delta(x)| ).
	\end{equation}
	This concludes the upper bound for $\varphi_{ \delta }$.
	
	On the other hand $I_1, I_2 \ge 0$. For the lower bound we look only at $I_1$. Due to \eqref{eq:Green function estimates} we also have that
	\begin{equation}
		\mathbb G_s (x,y) \ge \frac{c}{|x-y|^{n-2s}} \min \left( \frac{\delta(x)^s}{|x-y|^s},1 \right)\min \left( \frac{\delta(y)^s}{|x-y|^s},1 \right).
	\end{equation}
	Here we have to be a bit more careful with the minimum. In $B$, $\frac{\delta(x)^s}{|x-y|^s} \ge 2^s \ge 1$. Also, $\delta(y) \ge \frac{\delta (x)}{2}$ and so $\frac{\delta(y)^s}{|x-y|^s} \ge 1$ in $B$. Hence
	\begin{equation}
		\mathbb G_s (x,y) \ge \frac{c}{|x-y|^{n-2s}}, \qquad \textrm{if }|x-y| \le \frac{\delta(x)}{2}.
	\end{equation}
	Therefore
	\begin{equation}
		I_1 \ge \int_{ B } \frac{c}{|x-y|^{n-2s}\delta^s(y)} dy \ge \frac{c}{\delta^s(x)} \int_{ 0 }^{\frac{\delta(x)}{2}}  \frac{1}{r^{n-2s}} r^{n-1}dr =  \frac{c}{\delta^s(x)} r^{2s} \Big|_{ 0 }^{\frac{\delta(x)}{2}} \ge c \delta^s(x).
	\end{equation}
	This concludes the proof.
\end{proof}


\section{An alternative proof of Kato's inequality for the fractional Laplacian with weight}
\label{section:Kato proof}

	\begin{proposition}[Kato's inequality]   \label{Kato}
Let $u \in \mathcal C^{2s} (\mathbb R^n)$, then, for every $x \in \mathbb R^n$
		\begin{align}
		(- \Delta)^s u_+ &\le \sign_+u \, (-\Delta)^s u \\
		(- \Delta)^s |u| &\le \sign u \, (-\Delta)^s u .
		\end{align}
		Moreover, if $u\in L^1(\Omega)$, $f \delta^s \in L^1 (\Omega)$ and assuming that
		\begin{equation} \label{eq:Kato hypothesis}
		\int_{ \Omega } u (-\Delta)^s \varphi = \int_{ \Omega } f \varphi \qquad \forall 0 \le \varphi \in \Xs \cap C_c (\Omega),
		\end{equation}
		then, there exist $\xi_+ \in \widetilde \sign_+ (u)$ and $\xi \in \widetilde \sign (u)$ such that
		\begin{align}
		\int_{ \Omega } u_+ (-\Delta)^s \varphi &\le \int_{ \Omega } \xi_+ f  \, \varphi \\
		\int_{ \Omega } |u| (-\Delta)^s \varphi &\le \int_{ \Omega } \xi f  \, \varphi,
		\end{align}
		for all $0 \le \varphi \in \Xs \cap C_c  (\Omega)$, where $\widetilde \sign_+$ and $\widetilde \sign$ are the maximal monotone graphs given by
		\begin{equation}
		\widetilde \sign_+ (s) = \begin{dcases}
		0 & s < 0 , \\
		[0, 1] & s= 0, \\
		1 & s\ge 0.
		\end{dcases} \qquad
		\widetilde \sign (s) = \begin{dcases}
		-1 & s < 0 , \\
		[-1, 1] & s= 0, \\
		1 & s\ge 0.
		\end{dcases}
		\end{equation}
	\end{proposition}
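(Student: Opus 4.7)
The plan has two layers: first establish a pointwise Kato inequality for regular $u$, then propagate it to the integral formulation by approximating the data.

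\textbf{Stage 1 (pointwise).} For any convex $\Phi \in C^2(\mathbb{R})$ with $\Phi' \in L^\infty$, the elementary convexity inequality $\Phi(u(x)) - \Phi(u(y)) \le \Phi'(u(x))(u(x)-u(y))$, once divided by $|x-y|^{n+2s}$, integrated over $\{|x-y| > \varepsilon\}$, and passed to the principal-value limit, gives for $u \in C^{2s}(\mathbb{R}^n)$
\begin{equation*}
(-\Delta)^s \Phi(u)(x) \le \Phi'(u(x))\,(-\Delta)^s u(x).
\end{equation*}
Approximating $t \mapsto t_+$ (resp.\ $t\mapsto|t|$) by smooth convex $\Phi_\varepsilon$ with $\Phi_\varepsilon'$ uniformly bounded and converging a.e.\ to $\sign_+$ (resp.\ $\sign$), and using dominated convergence both for $\Phi_\varepsilon(u) \to u_+$ and for the inequality above, yields the two pointwise Kato inequalities of the statement.

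\textbf{Stage 2 (approximation of the integral form).} Approximate $f$ by bounded smooth $f_n$ with $f_n \delta^s \to f\delta^s$ in $L^1(\Omega)$. Let $u_n$ be the very weak solution of $(-\Delta)^s u_n = f_n$, $u_n=0$ in $\Omega^c$, furnished by \Cref{thm:chen+veron}; the $L^1$-continuity of $\Gs$ yields $u_n \to u$ in $L^1(\Omega)$, and along a subsequence pointwise a.e.\ in $\Omega$. By \Cref{prop:Ros-Oton} together with interior regularity for smooth data, $u_n \in C^s(\mathbb{R}^n) \cap C^{2s+\alpha}_{\mathrm{loc}}(\Omega)$ with $(-\Delta)^s u_n = f_n$ pointwise on $\Omega$, so Stage 1 applies: $(-\Delta)^s(u_n)_+ \le \sign_+(u_n)\,f_n$ in $\Omega$. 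Multiplying by $0 \le \varphi \in \Xs \cap C_c(\Omega)$ and using the density-based form of \Cref{prop:fractional integration by parts},
\begin{equation*}
\int_\Omega (u_n)_+ (-\Delta)^s \varphi \le \int_\Omega \sign_+(u_n)\,f_n\,\varphi.
\end{equation*}

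\textbf{Stage 3 (passage to the limit).} The left-hand side converges to $\int_\Omega u_+ (-\Delta)^s \varphi$ because $(u_n)_+ \to u_+$ in $L^1(\Omega)$ and $(-\Delta)^s \varphi \in L^\infty(\Omega)$. On the right, $\sign_+(u_n)$ is bounded in $L^\infty(\Omega)$ by $1$, so a subsequence satisfies $\sign_+(u_n) \rightharpoonup \xi_+$ weak-$\star$ in $L^\infty(\Omega)$. The a.e.\ convergence $u_n \to u$ then identifies $\xi_+(x) \in \widetilde{\sign}_+(u(x))$ a.e.: on $\{u>0\}$ and $\{u<0\}$ the $\sign_+(u_n)$ converge pointwise to $1$ and $0$, while on $\{u=0\}$ the weak-$\star$ limit automatically lies in $[0,1]$. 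Since $\varphi$ has compact support in $\Omega$, the weight $\delta^s$ is bounded away from zero on $\mathrm{supp}\,\varphi$ and hence $f_n\varphi \to f\varphi$ strongly in $L^1(\Omega)$; pairing this with the weak-$\star$ convergence of $\sign_+(u_n)$ shows that the right-hand side tends to $\int_\Omega \xi_+ f\varphi$. The same scheme applied to $\Phi_\varepsilon \to |\cdot|$ produces the inequality for $|u|$ with $\xi$ obtained as the weak-$\star$ limit of $\sign(u_n)$ along a single common subsequence.

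\textbf{Main obstacle.} The delicate step is Stage 3: one must multiply the weak-$\star$ limit of the bounded nonlinear sequence $\sign_+(u_n)$ against the strong $L^1$ limit of $f_n \varphi$, and, crucially, identify the weak-$\star$ limit $\xi_+$ as a measurable selection of the maximal monotone graph $\widetilde{\sign}_+(u)$. The key ingredient making this identification work is the a.e.\ convergence of a subsequence of $u_n$; everything else is routine linear approximation.
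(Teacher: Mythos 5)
Your proof is correct, and it follows the same overall skeleton as the paper (pointwise Kato inequality for regular functions, then a regularization argument in the integral form), but both stages are implemented differently, in a way that is arguably cleaner. In Stage 1 you derive the pointwise inequality from the convexity inequality $\Phi(u(x))-\Phi(u(y))\le\Phi'(u(x))(u(x)-u(y))$ for smooth convex $\Phi$ and then approximate $t_+$ and $|t|$; the paper instead argues directly with $s_+(x)=\sign_+u(x)$, using $u_+(y)\ge s_+(x)u(y)$ and $u_+(x)=s_+(x)u(x)$ to compare the kernels without ever introducing a smooth convex regularization. Both routes give the same pointwise estimate. The more substantive difference is in Stages 2--3. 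The paper runs a two-level limit: it sandwiches $\sign_+$ between continuous functions $\underline\gamma_\ee\le\sign_+\le\overline\gamma_\ee$, splits $\Omega$ by the (fixed) sign of $f$ so that the sandwich yields a one-sided bound compatible with the sign of $f_k\varphi$, passes $k\to\infty$ using the continuity of $\gamma_\ee$, and only then lets $\ee\to 0$ and extracts a weak-$\star$ limit of $\overline\gamma_\ee(u)\chi_{\{f\ge0\}}+\underline\gamma_\ee(u)\chi_{\{f<0\}}$. You instead take the weak-$\star$ $L^\infty$ limit of $\sign_+(u_n)$ in a single step, pairing it with the strong $L^1$ convergence of $f_n\varphi$ (which you correctly observe holds because $\varphi$ has compact support, so $\delta^s$ is bounded away from zero there), and identify the limit $\xi_+$ pointwise via a.e.\ convergence on $\{u\ne 0\}$ and the interval bound on $\{u=0\}$. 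Your single-step argument avoids the smooth sandwich and the sign-splitting of $\Omega$; the paper's double regularization buys nothing extra here, since the weak-$\star$ limit of the discontinuous sequence $\sign_+(u_n)$ can be identified directly with a selection of the maximal monotone graph as soon as one has a.e.\ convergence of a subsequence of $u_n$, exactly as you point out. One shared implicit assumption worth flagging: both proofs identify $u$ with $\Gs(f)$ in order to conclude $u_n\to u$ in $L^1(\Omega)$, which requires the very weak formulation against all test functions in $\Xs$ rather than only $\Xs\cap C_c(\Omega)$; this is satisfied in the paper's application of the lemma (where $u=u_1-u_2$ is a genuine very weak solution), so it is not a defect in your argument relative to the paper's.
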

	\begin{proof}
	First assume $u \in \mathcal C^{2s} (\mathbb R^n)$. Let $s_+(x) = \sign_+ u(x)$. We have that
	\begin{align}
	u_+(y) &\ge s_+(x) u(y) , \\
	u_+(x) &= s_+(x) u(x), \\
	\frac{u_+(x)-u_+(y)}{|x-y|^{n+2s}} &\le s_+(x) \frac{u(x)-u(y)}{|x-y|^{n+2s}} \\
	\int_ \Omega \frac{u_+(x)-u_+(y)}{|x-y|^{n+2s}} dy  &\le s_+(x) \int_ \omega \frac{u(x)-u(y)}{|x-y|^{n+2s}}dy\\
	(-\Delta)^s u_+ (x) &\le s_+(x) (-\Delta)^s u(x).
	\end{align}
	Applying this result to $-u$:
	\begin{align}
		(-\Delta)^s (-u)_+ (x) &\le \sign_+(-u) (-\Delta)^s (-u)\\
		(-\Delta)^s u_- (x) &\le \sign_-(u) (-\Delta)^s u.
	\end{align}
	Therefore,
	\begin{equation}
		(-\Delta)^s |u| \le \sign (u)\, (-\Delta)^s u.
	\end{equation}
	If $0 \le \varphi \in \Xs \cap C_c(\Omega)$ we have
	\begin{align}
	\int_ \Omega u_+(x)  (-\Delta)^s  \varphi (x) dx &= \int_ \Omega (-\Delta)^s u_+ (x) \varphi (x) dx \le  \int _\Omega s_+(x) f(x)  \varphi (x)dx.
	\label{eq:Kato proof 5}
	\end{align}
	
	Assume now that $u \in L^1(\Omega)$, $u = 0$ in $\Omega^c$ and \eqref{eq:Kato hypothesis} holds.
	Let $f_k = T_k (f)$ we have that $f_k \delta^s  \to f \delta^s $ in $L^1 (\Omega)$. Let $u_k$ be the unique solutions of
	\begin{equation}
	\begin{dcases}
	(-\Delta)^s u_k = f_k & \Omega, \\
	u_k = 0 & \Omega^c .
	\end{dcases}
	\end{equation}
	Then, by the results in \cite{chen+veron2014}, we know that $u_k \to u$ in $L^1(\Omega)$, hence $(u_k)_+ \to u_+$ in $L^1 (\Omega)$. On the other hand, by the previous part of the proof
	\begin{equation}
	\int_ \Omega(u_k )_+(x)  (-\Delta)^s  \varphi (x) dx  \le \int _\Omega \sign_+(u_k(x)) f_k (x)  \varphi (x) dx, \qquad  \forall 0 \le\varphi \in\Xs \cap C_c (\Omega).
	\end{equation}
	Let $0 \le \underline \gamma_ \ee (s) \le \sign_+ (s) \le \overline \gamma_ \ee (s) \le 1$ be smooth functions
	\begin{equation}
	\overline \gamma_ \ee (s) = \begin{dcases}
	0 & s < -\ee , \\
	1 & s > 0.
	\end{dcases}, \qquad
	\underline \gamma_ \ee (s) = \begin{dcases}
	0 & s < 0 , \\
	1 & s > \ee.
	\end{dcases}
	\end{equation}
	Since $f(x) > 0$ if and only $f_k (x) > 0$, we have that
	\begin{equation}
		\int_ \Omega(u_k )_+(x)  (-\Delta)^s  \varphi (x) dx  \le \int _{ \{f\ge 0\} } \overline \gamma_\ee (u_k(x)) f_k (x)  \varphi (x) dx + \int _{ \{f<0\} } \underline \gamma_\ee (u_k(x)) f_k (x)  \varphi (x) dx,
	\end{equation}
	for all $ 0 \le\varphi \in\Xs \cap C_c (\Omega)$. As $k \to \infty$ we have that
	\begin{equation}
		\int_ \Omega u_+(x)  (-\Delta)^s  \varphi (x) dx  \le \int _{ \{f\ge 0\} } \overline \gamma_\ee (u(x)) f (x)  \varphi (x) dx + \int _{ \{f<0\} } \underline \gamma_\ee (u(x)) f(x)  \varphi (x) dx.
	\end{equation}
	Up to a subsequence, there exists $\xi_+ \in L^\infty (\Omega)$ such that
	\begin{equation}
		\overline \gamma_\ee (u(x)) \chi_{\{f\ge 0 \}} + \underline \gamma_\ee (u(x)) \chi_{\{f<0\}} \to \xi_+(x) \qquad \textrm{ in } L^\infty\textrm{-weak-}\star.
	\end{equation}
	By the pointwise limits $\xi_+(x) = \sign_+ (u(x))$ when $u(x) \ne 0$ and $0 \le \xi_+ \le 1$. Thus $\xi_+ (x) \in \widetilde \sign (u(x))$.\\

	Hence
	\begin{equation}
	\int_ \Omega u_+(x)  (-\Delta)^s  \varphi (x) dx  \le \int _\Omega \xi_+ (x) f (x)  \varphi (x) dx, \qquad  \forall 0 \le\varphi \in\Xs \cap C_c (\Omega).
	\end{equation}
	As for the pointwise estimate, we can proceed analogously for $u_-$ (where $u = u_+ - u_-$) to deduce that
	\begin{equation}
	\int_ \Omega u_-(x)  (-\Delta)^s  \varphi (x) dx  \le \int _\Omega \xi_- f(x)  \varphi (x) dx, \qquad  \forall 0 \le\varphi \in\Xs \cap C_c (\Omega),
	\end{equation}
	with $\xi_-(x) \in \widetilde \sign_- (u(x))$. We then have
	\begin{equation}
	\int_ \Omega |u(x)|  (-\Delta)^s  \varphi (x) dx  \le \int _\Omega \xi (x) f(x)  \varphi (x) dx, \qquad  \forall 0 \le\varphi \in\Xs \cap C_c (\Omega),
	\end{equation}
	where $\xi (x) = \xi_+ (x) + \xi_-(x) \in \widetilde \sign (u(x))$. This concludes the proof.
	\end{proof}


\section{The weighted approach for related parabolic problems}

 The combination of our well-posedness results and a priori estimates allow us to immediately solve a number of related evolution problems, according to a general procedure of the evolution theory.

\noindent  {\bf 1.} The initial-value parabolic problem
\begin{equation}
\begin{dcases}
	\partial_t u + (-\Delta )^{s}  u + V(x)u=f(x,t) & \Omega \times (0,T)\\
	u = 0 & \Omega^c \times [0,T) , \\
	u = u_0 & \Omega \times \{0\},
	\end{dcases}
\end{equation}
can be solved for every $u_0\in L^1(\Omega; \phi)$, $f\in L^1(0,T; L^1(\Omega; \phi)$
under the conditions $0<s<1$, $V\in L^1_{loc}(\Omega), $ $V\le 0$  and $\phi$ is a positive  weight  in $X^s$ such that $(-\Delta)^s \phi\ge 0$.

Using \Cref{cor.accret} and the Crandall-Liggett generation theorem \cite{crandall1971generation} a contraction semigroup in all such spaces is generated and it satisfies the Maximum Principle.

Note that the fractional heat equation (case $V=0)$ has been studied in the whole space $\mathbb R^n$ in an optimal class of weighted integrable data in \cite{MR3614666}. The optimal weighted space in which solutions of the Cauchy problem for $\partial_t u+(-\Delta)^s u=0$ are well-posed is
$$
\int \frac{|u_0(x)|}{(1+|x|^2)^{(n+2s)/2}},dx<\infty.
$$
The reader will notice that  the weight decays at infinity in a precise way, to be compared with the behaviour $\delta^s$ of the bounded case.

The considerations made in \cite{diaz2019ambiguous} for the associated complex relativistic Schr\"odinger problem with potentials $V = \delta^{-2s}$ can be extended to the case of supersingular potentials $V \ge c \delta^{-2s}$, thanks to the results of Section 4 of this paper.

\noindent  {\bf 2. Fractional-PME}
The same project  can be applied to the {\sl fractional porous medium equation}
$$
\partial_t u+(-\Delta)^s u^m=f, \quad
$$
with $m>0$, $m\ne 1$, that has been studied in many works, mainly when $f=0$.
Thus, the non-weighted theory is done in \cite{dPQRV-MR2737788, dPQRV-MR2954615, Vazquez2014, Bonforte2015,  bonforte1610sharp}.
The basic result of generation of a semigroup in $L^1$ goes back to \cite{CrandallMR647071} and was used in \cite{BonfMR3427986}. The weighted theory is to be done.

Much work remains to be done on these issues.

\section{Comments, extensions, and open problems}\label{sec.comment}

Here are  some issues motivated by the previous presentation.

\subsection{More general potentials}
In this paper we have considered  nonnegative potentials $V\in L^1_{loc}(\Omega)$. This allows for extensions in two directions: considering signed potentials, and considering locally bounded measures as potentials. Both are present in the literature, but both lead to problems that we do not want to consider here.

\subsection{Other fractionary and nonlocal operators}

When working in bounded domains, there are several different choices of $(-\Delta)^s$ present in the literature (see, e.g., \cite{Bonforte2015, BFV2018, MR3588125, MusinaMR3246044, servadei2014spectrum}. The main choices apart from the restricted Laplacian treated here are the spectral Laplacian and censored Laplacian, ... Many of our results can be extended to them and this is contents of future work. Note that regularity for the equation $Lu=f$ in the case of the spectral Laplacian was studied in  {\cite{Caffarelli+Stinga2016}}.

Another issue is the Klein-Gordon fractional operator considered in Quantum Mechanics $\sqrt{(-\Delta ) + m^2}\,u$, and  mentioned in the Introduction. The theory for this operator is quite similar to what we have exhibited above for $(-\Delta)^{1/2}$, see \cite{diaz2019ambiguous}.  

The theory of this paper can be developed for more related  integro-differential operators that are being investigated like the integro-differential operators with irregular or rough kernels, as in \cite{karch2010nonlinear}. \normalcolor

The behaviour of the typical solutions of these operators near the boundary makes a difference. Thus, solutions of equations involving the spectral Laplacian they satisfy the linear behaviour of the classical Hopf principle, i.e., linear growth near the boundary.

	\subsection{Associated eigenvalue problem}
A main question for the Schr\"odinger equation is the eigenvalue problem, which comes from separation of variables. The eigenvalue theory works well in the sense of weak solutions in $L^2 (\Omega)$. For the classical Schr\"odinger problem with $s=1$, it is known that the eigenvalues of $L^1 (\Omega)$ and $L^2 (\Omega)$ are not the same. See \cite{Cabre+Martel1998}. It would be interesting to know if such difference remains being true for $s<1$.

\subsection{Open Problem on further integrability of the solutions}
		For problem \eqref{eq:FDE Laplace}, via the estimates on the Green kernel \eqref{eq:Green operator}, the natural space for integrability will be of the form $W^{s',p} (\Omega, \delta^s)$ for $s' < s$ and $p$ small.  These estimates can then be extended to problem \eqref{eq:FDE} using maybe the methods of \cite{Diaz+Rakotoson:2009, Diaz+Rakotoson:2010}.

\section*{Acknowledgments} The authors have been partially funded by the Spanish Ministry of  Economy, Industry and Competitiveness
 under projects MTM2014-57113-P and MTM2014-52240-P. J.\ I. D\'iaz and D. G\'omez-Castro are members of	the Research Group MOMAT (Ref. 910480) of the UCM. J.\ L. V\'azquez would like to thank IMI (Instituto de Matem\'atica Interdisciplinar) for their kind invitation to visit the Universidad Complutense in the academic year 2017--2018.  We want to  thank X. Ros-Oton and Y. Sire for interesting observations on the contents of the paper.\normalcolor

\

\addcontentsline{toc}{section}{References}

\

\noindent {\sc Keywords.}  Nonlocal elliptic equations, bounded domains, Schr\"odinger operators, super-singular potentials, very weak solutions, weighted spaces. \normalcolor

\noindent{\sc Mathematics Subject Classification}. 35J10, 35D30,  35J67,
35J75.

\end{document}